\newtheorem{Def}{Definition}[section]
\newtheorem{lem}[Def]{Lemma}
\newtheorem{tho}[Def]{Theorem}
\newtheorem{exam}[Def]{Example}
\newtheorem{coro}[Def]{Corollary}
\newtheorem{hyp}{Hypothesis}
\newtheorem{prop}[Def]{Proposition}
\newtheorem{rem}[Def]{Remark}
\newenvironment{asp}
  {\pushQED{\qed}\assumptionex}
  {\popQED\endassumptionex}
\newcommand{\ud}{\mathrm d}
\newcommand{\E}{\mathbb E}
\newcommand{\R}{\mathbb{R}}
\allowdisplaybreaks \allowdisplaybreaks[4]
\theoremstyle{thmstyleone}%
\theoremstyle{thmstyletwo}%
\theoremstyle{thmstylethree}%
\begin{document}

\title[Article Title]{
Asymptotic-preserving approximations  for  stochastic incompressible viscous fluids and SPDEs on graph
}


\author[1]{\fnm{Jianbo} \sur{Cui}}\email{jianbo.cui@polyu.edu.hk}

\author*[1]{\fnm{Derui} \sur{Sheng}}\email{derui.sheng@polyu.edu.hk}


\affil[1]{\orgdiv{Department of Applied Mathematics}, \orgname{The Hong Kong Polytechnic University}, \orgaddress{\street{Hung Hom}, 
\state{Kowloon}, \country{Hong Kong}}}

%

\abstract{
The long-term dynamics of particles  
involved in an incompressible flow with a small viscosity ($\epsilon>0$) and slow chemical reactions, is depicted by a class of stochastic reaction-diffusion-advection (RDA) equations with a fast advection term of magnitude $1/\epsilon$. 
It has been shown in 
\cite{CF19} the fast advection asymptotics of stochastic RDA equation in $\R^2$ can be characterized through a stochastic partial differential equation (SPDE) on the graph associated with certain Hamiltonian. 
To simulate such fast advection asymptotics,
 we introduce and study an asymptotic-preserving (AP) exponential Euler approximation for the multiscale stochastic RDA equation. There are three key ingredients in proving asymptotic-preserving property of the proposed approximation. First,  a strong error estimate, which depends on $1/\epsilon$ linearly, is obtained via a variational argument. Second, 
 we prove the consistency of exponential Euler approximations on the fast advection asymptotics between the original problem and the SPDE on graph. Last, a graph weighted space is introduced to quantify the approximation error for SPDE on graph, which avoids the possible singularity near the vertices. Numerical experiments are carried out to support the theoretical results.
}

\pacs[MSC Classification]{60H15, 60H35, 35R02}



\keywords{stochastic RDA
equations, SPDE on graph, exponential Euler approximation, fast advection asymptotics, asymptotic-preserving property}

\maketitle
\section{Introduction}
Given a certain Hamiltonian function (as called the stream function)  $H:\R^2\to \R$, which describes the flow pattern of a fluid or gas in a two-dimensional space, the density of particles moving along with an incompressible flow in $\R^2$ adheres to the Liouville equation, $$\partial_t u(t,x)=\langle\nabla^{\perp} H(x),\nabla u(t,x)\rangle,$$ where $\nabla^\perp H(x)=(-\partial_2 H(x),\partial_1 H(x))^\top$ for any point $x\in\R^2$.
In cases where this incompressible flow exhibits a small viscosity of magnitude $\epsilon>0$ and the particles are involved in a slow chemical reaction, which includes both a deterministic and a stochastic component, the particle density $\tilde{u}_{\epsilon}(t,x)$ can be described by the following equation (as referenced in \cite{CF19}),
\begin{equation*}
\partial_t \tilde{u}_{\epsilon}(t,x)=\frac{\epsilon}{2}\Delta \tilde{u}_{\epsilon}(t,x)+\langle\nabla^{\perp} H(x),\nabla\tilde{u}_{\epsilon}(t,x)\rangle
 +\epsilon b(\tilde{u}_{\epsilon}(t,x))+\sqrt{\epsilon}g(\tilde{u}_{\epsilon}(t,x))\partial_t\tilde{\mathcal{W}}(t,x).
\end{equation*}
Here $\tilde{\mathcal{W}}(t,x)$ represents a spatially homogeneous Wiener process (see section \ref{S:SRDA} for more details). This equation describes how the density of particles changes over time, taking into account the effects of viscosity, the flow pattern, and the chemical reaction. Suppose that $b,g:\R\to\R$ are globally Lipschitz continuous, and the Hamiltonian $H$ grows essentially quadratically at infinity.
As $\epsilon$ tends to zero, on any finite time interval $[0,\mathsf T]$, 
the function $\tilde{u}_{\epsilon}(t,x)$ is anticipated to converge in probability to $u(t,x)$. 
In order to study the asymptotical behavior of $\tilde{u}_{\epsilon}(t,x)$ over the large time interval $[0,\mathsf T/\epsilon]$ ($0<\epsilon\ll1$), with a change of time, we notice that $u_\epsilon(t,x)=\tilde u_\epsilon(t/\epsilon,x)$ satisfies
the following stochastic reaction-diffusion-advection (RDA) equation
\begin{equation}\label{eq:SPDE}
\left\{
\begin{split}
\partial_t u_{\epsilon}(t,x)&=\mathcal L_\epsilon u_{\epsilon}(t,x)
 +b(u_{\epsilon}(t,x))+g(u_{\epsilon}(t,x))\partial_t\mathcal{W}(t,x), \quad t\in[0,\mathsf T],\\
 u_{\epsilon}(0,x)&=\psi(x),\quad x\in\R^2,
 \end{split}
 \right.
\end{equation}
where $\mathcal L_\epsilon=\frac{1}{2} \Delta +\frac{1}{\epsilon}\nabla^{\perp} H(x)\cdot \nabla$ and $\mathcal W(t,x)=\sqrt{\epsilon}\tilde{\mathcal{W}}(t/\epsilon,x)$.

By assuming that the derivative of the period of the motion on level sets of $H$ does not vanish (see \eqref{eq:Tneq0}),
the authors in \cite{CF19} characterize the limit of $u_\epsilon$ as  $\epsilon\to 0$, by a stochastic partial differential equation (SPDE) defined on a graph $\Gamma$ associated with the Hamiltonian $H$.
More precisely, $\Gamma:=\Pi(\R^2)$ is the image of
the projection $\Pi$ on $\R^2$ identifying all points on the same connected component of each level set of $H$ (as detailed in section \ref{S2.1}). For a class of graph weighted functions $\gamma:\Gamma\to \R$ (see Assumption \ref{Asp:gamma}),
\cite[Theorem 5.3]{CF19} shows that
 for every $p\ge1$ and $0< \tau_0<\mathsf T$, 
 \begin{equation}\label{eq:u-u}
\lim_{\epsilon\to0}\E\bigg[\sup_{t\in[\tau_0,\mathsf T]}\|u_\epsilon(t)-\bar{u}(t)\circ \Pi\|_{\mathbb{H}_\gamma}^p\bigg]=0
\end{equation}
with $\mathbb{H}_\gamma:=L^2(\R^2,\gamma(\Pi(x))\ud x)$. 
The limiting process $\bar{u}$  is governed by the SPDE on $\Gamma$, 
\begin{equation}\label{eq:utzk}
\left\{
\begin{split}
\partial_t \bar{u}(t, z, k)&=\bar{\mathcal{L}} \bar{u}(t, z, k)+b(\bar{u}(t, z, k))+g(\bar{u}(t, z, k)) \partial_t \bar{\mathcal{W}}(t, z, k), \quad t\in[0,\mathsf T], \\
\bar{u}(0, z, k)&=\psi^{\wedge}(z, k), \quad(z, k) \in \Gamma;
\end{split}
\right.
\end{equation}
see section \ref{S2} for more details about the operators $^\wedge$ and $\mathcal L$, as well as the random field $\bar{\mathcal W}$.
The solution $u_\epsilon$ (resp. $\bar{u}$) of \eqref{eq:SPDE} (resp. \eqref{eq:utzk}) belongs to $\cap_{p\ge1}L^p(\Omega; \mathcal C([0,T];\mathbb{H}_\gamma))$ (resp. $\cap_{p\ge1}L^p(\Omega; \mathcal C([0,T];\bar{\mathbb{H}}_\gamma))$), where $\bar{\mathbb{H}}_\gamma$ is a weighted $L^2$-space on the graph $\Gamma$ such that
$\|f\circ\Pi\|_{\mathbb{H}_\gamma}=\|f\|_{\bar{\mathbb{H}}_\gamma}$ for any $f\in \bar{\mathbb{H}}_\gamma$.
 Recently, \cite{CX21} extends such fast asymptotics of \eqref{eq:SPDE} to encompass scenarios with more flexible assumptions regarding the Hamiltonian and the regularity of noise. 
 It is important to note that the SPDEs on graphs allow for the modeling of complex systems where the underlying space may be not a Euclidean space but rather a network with a possibly irregular structure (see, e.g., \cite{BMZ08}). In particular, it can provide a concise framework for exploring the asymptotic behaviors of SPDEs with a small parameter.
The study of this area is still in its infancy, with few results currently available in the literature. In addition to the aforementioned papers \cite{CF19,CX21}, the seminal work \cite{CF17} demonstrates that the parabolic SPDE in a narrow channel converges in distribution to an SPDE on a certain graph, as the width $\epsilon$ of the channel tends to $0$. 
In \cite{FW21}, a new class of SPDEs on graphs of Wright--Fisher type is introduced, derived as scaling limits of suitably defined biased voter models.

 In the present work, we mainly focus on asymptotic-preserving (AP) numerical approximations  for the original multiscale model
 \eqref{eq:SPDE}
 and the limit equation \eqref{eq:utzk} since their analytical solutions do not exist in general.  Implementing a numerical scheme without the AP property could potentially lead to incorrect conclusions about the limiting equation. The concept of AP schemes was introduced in for instance \cite{JS99,JS12} for deterministic partial differential equations and extended in \cite{BR22} to a class of slow-fast stochastic ordinary differential equations. We also refer to, e.g., \cite{ELV05} for an in-depth error analysis of heterogeneous multiscale methods for multiscale stochastic ordinary differential equations. 
 Roughly speaking, a numerical scheme is called 
AP for \eqref{eq:SPDE} if, as the $N=\mathsf T/\tau$ (with
$\mathsf T>0$ fixed and $\tau$ being the stepsize) tends to infinity,
 the fast advection asymptotical behavior of the corresponding numerical solution $U_\epsilon^N$ of $u_\epsilon(\mathsf{T})$ is consistent with $\bar{u}(\mathsf T)$, say 
\begin{equation}
\label{eq:UNep-u}\lim_{N\to\infty}\lim_{\epsilon\to 0}U_\epsilon^N=\bar{u}(\mathsf T)\circ\Pi\quad\text{in some sense}.
\end{equation}
 This consistency is crucial for accurately capturing the transition from the multiscale model \eqref{eq:SPDE} to its asymptotical limit \eqref{eq:utzk}, ensuring that the numerical simulations remain valid across different scales $0<\epsilon\ll 1$. 
 \begin{figure}[!htb]
\centering
\scalebox{1}{
\begin{tikzpicture}[node distance=50pt]
\node[text width=4em, align=center] (UpperLeft) {$U_{\epsilon}^N$};
\node[text width=4em, align=center, right=80pt of UpperLeft] (UpperRight) {$\bar{U}^N \circ \Pi$};
\node[text width=4em, align=center, below=of UpperLeft] (LowerLeft) {$U_{\epsilon}(\mathsf T)$};
\node[text width=4em, align=center, below= of UpperRight] (LowerRight) {$\bar{u}(\mathsf T) \circ \Pi$};
\node[text width=4em, align=center, right=80pt of UpperRight] (Upper2Right) {$\bar{U}^N$};
\node[text width=4em, align=center, right= 80pt of LowerRight] (Lower2Right) {$\bar{u}(\mathsf T)$};

\draw[->] (UpperLeft) -- node[above] {$\epsilon \rightarrow 0$} (UpperRight);
\draw[->] (UpperLeft) -- node[below] {\tiny{Theorem \ref{theo:asy}
}} (UpperRight);

\draw[->] (UpperLeft) -- node[left, text width=0.6em, align=center] {\rotatebox{-90}{$N \rightarrow \infty$}} (LowerLeft);
\draw[->] (UpperLeft) -- node[right, text width=0.8em, align=center] {\rotatebox{-90}{\tiny{Theorem 
\ref{theo:MS-2D}}}} (LowerLeft);

\draw[->] (LowerLeft) -- node[above] {$\epsilon \rightarrow 0$} (LowerRight);
\draw[->] (LowerLeft) -- node[below] {\tiny{\cite[Theorem 5.3]{CF19}}} (LowerRight);

\draw[->] (UpperRight) -- node[below] {\tiny{Projection}}(Upper2Right);

\draw[->] (LowerRight) -- node[below] {\tiny{Projection}}(Lower2Right);

\draw[->] (Upper2Right) -- node[right, text width=0.6em, align=center] {\rotatebox{-90}{$N \rightarrow \infty$}} (Lower2Right);
\draw[->] (Upper2Right) -- node[left, text width=0.8em, align=center] {\rotatebox{-90}{\tiny{Theorem 
\ref{theo:MS-1D}}}} (Lower2Right);
\end{tikzpicture}
}
\caption{A commutative diagram.} 
\label{fig.doubleDiagram}
\end{figure}

However, the design and analysis of AP schemes will encounter several challenges. First, to ensure that the numerical approximation captures the correct statistical properties of \eqref{eq:SPDE}, we need to track how the approximation errors propagate across different scales. Second, for a fixed $N>1$, the error bound between the original solution and its numerical solution should depend on $\epsilon^{-1}$ at most polynomially (other than exponentially) in order to simulate \eqref{eq:SPDE} with acceptable computational costs. This phenomenon   
potentially makes that the error bound of $\|U_\epsilon^N-\bar{u}(\mathsf T)\circ\Pi\|_{\mathbb{H}_\gamma}$ explodes 
as $\epsilon\to 0$ (see, e.g., \cite{BR22}). 
Third, to identify the AP property \eqref{eq:UNep-u}, it is crucial to find a new metric space other than $\bar{\mathbb{H}}_\gamma$, so that the error between $(U_\epsilon^N)^\wedge$ and $\bar{u}(\mathsf T)$  is controllable in such new space. In addition, a technical challenge is to 
handle the singularity near each vertex since the dominated operator $\bar{\mathcal L}$ is not uniformly elliptic on the graph.

To address the aforementioned challenges, one key ingredient is the property from \cite{CF19} that the semigroup $S_\epsilon(t)$ generated by $\mathcal L_\epsilon$ converges to the semigroup $\bar{S}(t)$ generated by $\bar{\mathcal L}$ (as referenced in \eqref{eq:Seps-S}). 
To make use of this  property,
 we propose the exponential Euler approximation for the multiscale SPDE \eqref{eq:SPDE} in the time direction, 
which is designed by
 freezing the integrands at the left endpoints of the intervals. 
 In detail,
 let $\{t_n:=n\tau\}_{n=0}^N$ be a uniform partition of the time interval $[0,\mathsf{T}]$ with the time stepsize
$\tau=\mathsf{T}/N$.
The exponential Euler approximation of  \eqref{eq:SPDE} reads $U_{\epsilon}^0=\psi$ and
\begin{equation}\label{eq:EEM}
U_{\epsilon}^{n}=S_\epsilon(\tau) U_{\epsilon}^{n-1}+S_\epsilon(\tau) B(U_\epsilon^{n-1}) \tau+S_\epsilon(\tau) G(U_\epsilon^{n-1}) \delta\mathcal{W}_{n-1}
\end{equation}
for $n=1,\ldots,N$, where
$\delta\mathcal{W}_{n-1}:=\mathcal{W}(t_n)-\mathcal{W}(t_{n-1})$ is the temporal increment of $\mathcal{W}$, and $B$ and $G$ are Nemytskij operators associated with $b$ and $g$, respectively.
It should be pointed out  that the error estimate of the approximation \eqref{eq:EEM} is different from the bounded domain setting (see, e.g., \cite{LT13}). 
Due to the loss of the smoothing property of the semigroup $S_{\epsilon}(t),t\ge 0$, we use the variational arguments, together with the skew-symmetry of the symplectic matrix and the divergence-free property of $\nabla^{\perp} H(x)$, to establish gradient estimates of SPDEs at first. Then the approximation error of \eqref{eq:EEM} is measured as follows
 \begin{align*}
\max_{0\le n\le N}\mathbb{E}\left[\|u_{\epsilon}(t_n)-U_{\epsilon}^{n}\|^2_{\mathbb{H}_\gamma}\right]
\lesssim \tau(1+\epsilon^{-2})
\end{align*}
as stated in Theorem \ref{theo:MS-2D}. Note that the mean square  convergence rate of \eqref{eq:EEM} is of order $1/2$ and 
depends on $1/\epsilon$ linearly.

Concerning the fast advection asymptotics, a further question is 
\emph{whether 
the exponential Euler approximation preserves the asymptotical behavior \eqref{eq:u-u} or not.}
To answer this question, we provide in Theorem \ref{theo:asy} 
 a discrete analogue of \eqref{eq:u-u}, namely
 \begin{equation}
 \label{eq:IntroUn-Un}
\lim_{\epsilon\to 0}\E\left[\sup_{1\le n\le N}\|U_{\epsilon}^{n}-(\bar U^{n})^{\vee}\|_{\mathbb{H}_\gamma}^p\right]=0\quad \forall~p\ge2
\end{equation} by using the asymptotical behavior \eqref{eq:Seps-S} of $\{S_\epsilon(t)\}_{t\in[0,\mathsf T]}$ and a factorization argument (see, e.g., \cite{DP14}). Here, $\{\bar{U}^n\}_{n=0}^N$ is the exponential Euler approximation of the limiting process $\{\bar{u}(t_n)\}_{n=0}^N$ (see \eqref{eq:Un}).
  Since the numerical solution $\{U_\epsilon^n\}_{n=0}^N$ is a discrete-time process that evolves on grid points progressively,  an additional condition that the initial value $\psi\in \mathbb{H}_\gamma$ is constant on every connected component of $H$ is imposed compared to the continuous case in \cite{CF19} 
 (see Lemma \ref{prop:AC} for more details). 
 We also point out that common numerical discretizations, like the standard Euler--Maruyama approximation,  may fail to preserve the fast advection asymptotics \eqref{eq:u-u}. 
  The property \eqref{eq:IntroUn-Un} reveals that the exponential Euler approximation \eqref{eq:EEM} can preserve the asymptotical behavior \eqref{eq:u-u} of the stochastic RDA equation \eqref{eq:SPDE} in the fast advection limit. 
Consequently,
verifying the AP property of the exponential Euler approximation for \eqref{eq:SPDE} boils down to measuring the error between $\bar{u}(\mathsf T)$ and $\bar{U}^N$. 
To this end,
we focus on a simplified case that $H$ is a radius function with a unique 
critical point $0$, as outlined in Assumption \ref{Asp:zeta}. 
In order to offset the singularity of the semigroup $\bar{S}(t)$ near the vertex $0$, we carry out the error analysis of the exponential Euler approximation for the limiting equation \eqref{eq:utzk} in a suitable metric space $L^2(\Gamma;A\gamma\ud z)$, by virtue of the fact that $A(z):=\frac12\oint_{\{x\in\R^2:H(x)=z\}} |\nabla H(x)| \ud l_{z}$ behaves like a linear function (see Lemma \ref{lem:TFA}). 
This reveals that the exponential Euler approximation \eqref{eq:Un} of the limiting equation \eqref{eq:utzk} also exhibits a mean square convergence of order $1/2$ (as stated in Theorem \ref{theo:MS-1D}).
As a result, we are able to depict the AP property of the exponential Euler approximation for \eqref{eq:SPDE}, as in Fig.\ \ref{fig.doubleDiagram}.

 The underlying graph $\Gamma$ considered in this work is a continuous object consisting of all points on their edges. By discretizing SPDEs on graphs spatially, one may obtain more discrete models, such as the interacting stochastic differential equations discussed in \cite{FW21}. 
 Such models can be regarded as SPDEs where the spatial domain is a discrete set comprising exclusively the vertices of graphs. 
 SPDEs on discrete graphs have attracted  more attention recently, 
 including the Wasserstein Hamiltonian flows on graphs with common noise \cite{CLZ23} and the stochastic nonlinear Schr\"odinger equations on graphs \cite{MR4612606}. They can be naturally perceived as stochastic extensions of deterministic partial differential equations on graphs, as developed in previous works \cite{CHLZ12, MR3926122, MA11, MJ11}.  In the future, we plan to combine the exponential Euler approximation with proper spatial discretizations and study fully discrete AP schemes of multiscale SPDEs such as \eqref{eq:SPDE}.

The rest of this paper is organized as follows. Section \ref{S2} collects some preliminaries on the Hamiltonian, graph, and projection operator. We also revisit the well-posedness of \eqref{eq:SPDE} and \eqref{eq:utzk} in section \ref{S2}.
The mean square convergence analysis of the exponential Euler approximation for the stochastic RDA equation \eqref{eq:SPDE} is presented in section \ref{S3}. Then we identify the fast advection asymptotics of the numerical solution in section \ref{S4}. 
Section \ref{S5} is devoted to studying the AP property of the proposed
approximation. We perform some numerical experiments in section \ref{S6} to illustrate the theoretical findings.

\section{Preliminaries}\label{S2}
 In the sequel, $C$ and $c$ denote generic positive constants which are independent of the time stepsize $\tau>0$, time variable $t\in[0,\mathsf{T}]$, small parameter  $\epsilon\in(0,1]$ and may differ from occurrence to occurrence. We use $x\lesssim y$ to indicate that $x\le Cy$ for $x,y\in\R$. Sometimes, we write $C(a,b)$ to emphasize its dependence on certain quantities $a,b$. 
In sections \ref{S2.1}-\ref{S2.2}, we introduce some notions about the graph $\Gamma$ and projection $\Pi,$ and main assumptions about the Hamiltonian $H$ and the graph weighted function $\gamma$. We revisit the well-posedness of the stochastic RDA equation \eqref{eq:SPDE} and the limiting equation \eqref{eq:utzk} in sections \ref{S:SRDA} and \ref{S:SPDE}, respectively.
  The main setting is borrowed from \cite{CF19}.
  
\subsection{Hamiltonian, graph, and projection}\label{S2.1}
For every fixed $\epsilon>0$, consider the diffusion process
\begin{align}\label{eq:Xt}
\ud X_\epsilon(t)=\frac{1}{\epsilon} \nabla^{\perp} H\left(X_\epsilon(t)\right) \ud t+\ud \textup{B}(t), 
\end{align}
where $\{\textup B(t)\}_{t\ge0}$ is a $2$-dimensional Brownian motion defined on some stochastic basis $(\Omega, \mathcal{F},\{\mathcal{F}_t\}_{t \geq 0}, \mathbb{P})$. Throughout this paper, we always assume that the Hamiltonian $H$ satisfies the following Hypothesis \ref{asp:H}.

 \begin{hyp}\label{asp:H} Assume that the Hamiltonian $H: \mathbb{R}^2 \rightarrow \mathbb{R}$ satisfies
\begin{enumerate}
\item[(i)]  $H$ is fourth continuously differentiable with bounded second derivative and
$\min _{x \in \mathbb{R}^2} H(x)=0;
$
\item[(ii)]  $H$ has only a finite number of critical points $x_1, \ldots, x_{m_1}$. The Hessian matrix $\nabla^2 H(x_i)$ is non-degenerate for every $i=1, \ldots, m_1$, and $H(x_i) \neq$ $H(x_j)$ if $i \neq j$;
\item[(iii)]  There exist $a_1, a_2, a_3>0$ such that 
 for all $x \in \mathbb{R}^2$ with $|x|$ large enough,
$$H(x) \ge a_1|x|^2,\quad|\nabla H(x)| \ge a_2|x|,\quad\Delta H(x) \ge a_3.$$
\end{enumerate}
\end{hyp}

To characterize the limiting process of $\{X_\epsilon(t)\}_{t\ge0}$, we  introduce a graph $\Gamma$ associated with the Hamiltonian $H$ as follows.
By identifying all points in $\mathbb{R}^2$ in the same connected component of a given level set $C(z):=\{x\in\R^2:H(x)=z\}$ of the Hamiltonian $H$, we obtain a graph $\Gamma$ consisting of several intervals $I_1, \ldots I_{m}$ and vertices $O_1, \ldots, O_{m_1}$. 
\begin{figure}[!htb]
\centering
\includegraphics[width=0.7\linewidth]{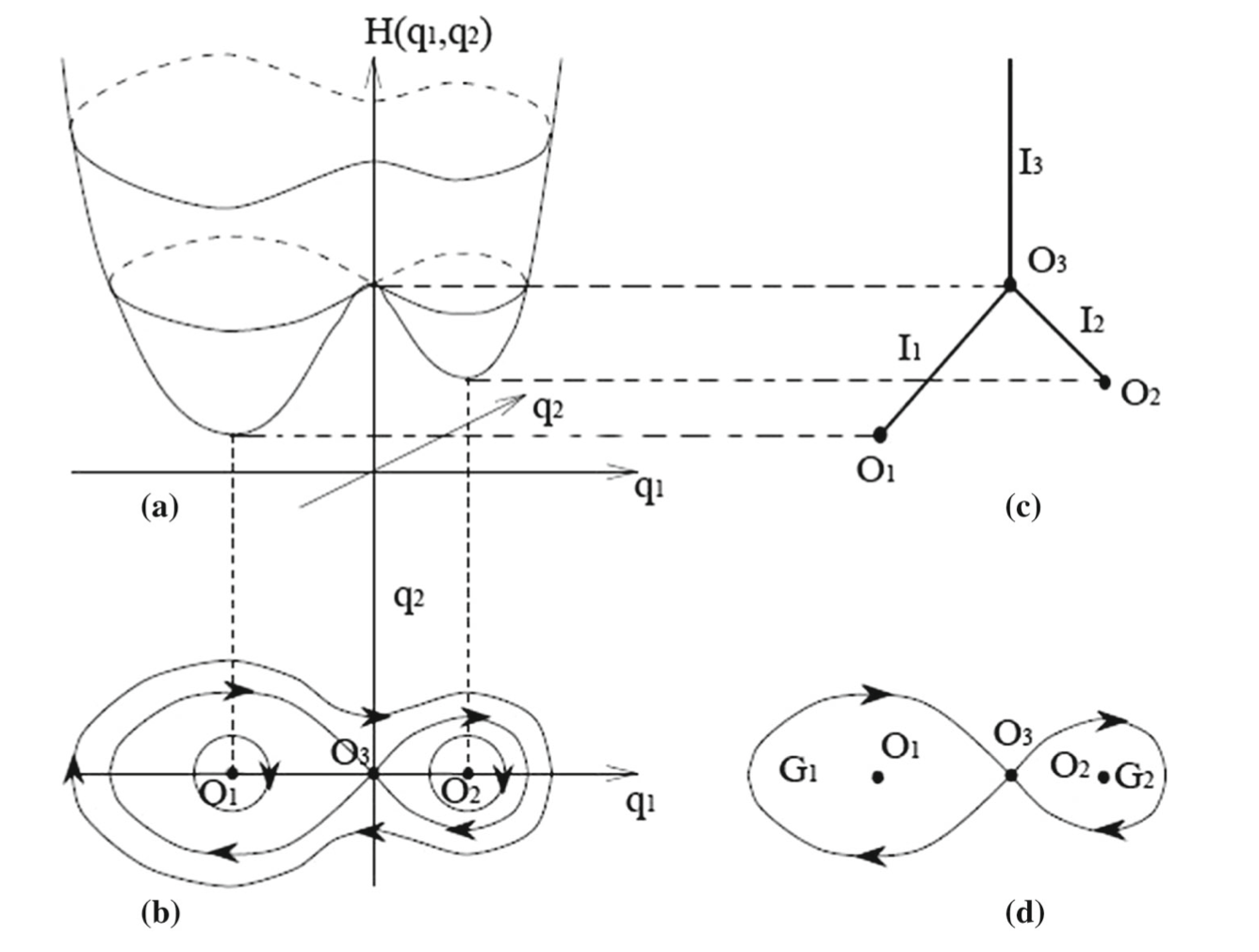}
\caption{(a) Hamiltonian $H$ with three critical points corresponding to $O_1,O_2,O_3$; (b) Level sets of $H$; (c) Graph $\Gamma$ with three edges $I_1,I_2,I_3$ and four vertices $O_1,O_2,O_3,O_\infty$; (d) Level set $\{x\in\R^2,H(x)=H(O_3)\}$ \cite{CF19}.}
\end{figure}
The vertices will be of two different types, external and internal vertices, which, respectively, correspond to local extrema and saddle points of $H$. We will also include $O_{\infty}$ among external vertices, the endpoint of the only unbounded interval in the graph, corresponding to the point at infinity.

We  define the projection $\Pi: \mathbb{R}^2 \rightarrow \Gamma$
via $\Pi(x)=(H(x), k(x))$, where $k(x)$ denotes the number of the interval on the graph $\Gamma$ containing the point $\Pi(x)$. 
When $x$ corresponds to an interior vertex $O_i$, there are three edges having $O_i$ as their endpoint, which implies that $k(x)$ is not uniquely defined. 
On the graph $\Gamma$, a distance can be introduced in the following way. If $y_1=(z_1, k)$ and $y_2=(z_2, k)$ belong to the same edge $I_k$, then $d(y_1, y_2)=|z_1-z_2|$. In the case that $y_1$ and $y_2$ belong to different edges, 
$$
d(y_1, y_2):=\min \left\{d(y_1, O_{i_1})+d(O_{i_1}, O_{i_2})+\cdots+d(O_{i_j}, y_2)\right\},
$$
where the minimum is taken over all possible paths from $y_1$ to $y_2$, through every possible sequence of vertices $O_{i_1}, \ldots, O_{i_j}$, connecting $y_1$ to $y_2$.

For every $(z,k)\in\Gamma$, the probability measure 
$$\ud\mu_{z,k}:=\frac{1}{T_k(z)}\frac{1}{|\nabla H(x)|} \ud l_{z, k}$$ is invariant for the deterministic Hamiltonian system 
$
\dot{X}(t)=\nabla^{\perp} H(X(t))$.
Here $\ud l_{z, k}$ is the length element on the connected component $C_k(z)$ of the level set $C(z)$, corresponding to the edge $I_k$, and
$T_k(z)=\oint_{C_k(z)} \frac{1}{|\nabla H(x)|} \ud l_{z, k}$ is the 
 period of the motion on the level set $C_k(z)$. 
By applying an averaging procedure with respect to the invariant measure $\mu_{z,k}$, it has been shown in \cite[Chapter 8]{FW12} that for any $X_\epsilon(0)=x\in\R^2$,
 the process $\Pi(X_\epsilon(t))$
 converges, in the sense of weak convergence of distributions in $\mathcal C([0,\mathsf{T}];\Gamma)$ to a Markov process $\bar{Y}$ on $\Gamma$, corresponding to the infinitesimal generator 
\begin{equation}\label{eq:barL}\bar{\mathcal L} f(z, k)=\frac{1}{ T_k(z)} \frac{\ud}{\ud z}\left(A_k \frac{\ud f}{\ud z}\right)(z), \quad\text{ if $(z, k)$ is an interior point of $I_k$,}
\end{equation}where $f$ is a function defined on $\Gamma$ and 
$A_k(z)=\frac12\oint_{C_k(z)} |\nabla H(x)| \ud l_{z, k}$ (see also \cite[section 2]{CF19} for more details). 

Borrowed from \cite{FW12},
we give an intuition for the operator $\bar{\mathcal L}$, by assuming that there is only one edge $[0,\infty)$ on $\Gamma$. In the case of $\Gamma=[0,\infty)$, we omit the subscript $k$ of $C_k(z), T_k(z)$, $A_k(z)$, $l_{z,k}$, $\mu_{z,k}$, etc.
The It\^o formula applied to $H(X_\epsilon(t))$ gives that
$$H(X_\epsilon(t))=H(X_\epsilon(0))+\int_0^t\frac{1}{2}\Delta H(X_\epsilon(s))\ud s+\int_0^t\nabla H(X_\epsilon(s))\ud \textup{B}(s),\quad t\ge 0.$$
Roughly speaking, $\{X_\epsilon(t)\}_{t>0}$ rotates many times along the trajectories of the deterministic Hamiltonian system before $H(X_\epsilon(t))$ changes considerably. By an averaging procedure with respect to the invariant measure $\mu_{z}$, one has that for sufficiently small $\epsilon>0$, the deterministic term $\int_0^t\frac{1}{ 2}\Delta H(X_\epsilon(s))\ud s$ and the quadratic variation $\int_0^t|\nabla H(X_\epsilon(s))|^2\ud s$ of the stochastic term 
can be approximated by $\int_0^t \beta(H(X_\epsilon(s)))\ud s$ and $\int_0^t 2\alpha(H(X_\epsilon(s)))\ud s$, respectively, where
$$
\beta(z):=\frac{1}{2}\oint_{C(z)}\Delta H(x)\ud\mu_{z},\quad \alpha(z):=\frac12\oint_{C(z)}|\nabla H(x)|^2\ud\mu_{z},\quad z\in[0,\infty).$$
Due to \cite[Lemma 1.1]{FW12}, one have that 
\begin{equation}\label{eq:ATA}
\alpha(z)=\frac{A(z)}{T(z)},\quad
\beta(z)=\frac{1}{2T(z)}\oint_{C(z)}\frac{\Delta H}{|\nabla H|}\ud l_z=\frac{A^\prime(z)}{T(z)}.
\end{equation}
Hence at least formally, the generator of the limiting process of $\{H(X_\epsilon(t))\}_{t\ge 0}$, as $\epsilon\to 0$, is
 $\alpha(z)\partial_{zz}+\beta(z)\partial_z$, which is consistent with the operator $\bar{\mathcal L}$ defined in \eqref{eq:barL}.

\subsection{Main assumptions}\label{S2.2}
For fixed $\epsilon>0$, the operator $\mathcal{L}_\epsilon=\frac{1}{2} \Delta +\frac{1}{\epsilon}\nabla^{\perp} H(x)\cdot \nabla$
is the generator of the diffusion process $\{X_\epsilon(t),t\ge0\}$. In this part, we present that the asymptotical behavior of the semigroup $\{S_\epsilon(t),t>0\}$ generated by $\mathcal{L}_\epsilon$ can be described by the semigroup $\{\bar{S}(t),t>0\}$ generated by $\bar{\mathcal{L}}$ (see \cite{CF19} for more details). This property will be used to study the fast advection asymptotics of the numerical solution in section \ref{S4}.

Considering that $S_\epsilon(t)$ is an operator acting on functions defined on the whole space $\R^2$, we study it in a space of weighted square-integrable function  $
\mathbb{H}_\gamma:=L^2(\R^2;\gamma(\Pi(x))\ud x)
$, where $\gamma: \Gamma \rightarrow(0,+\infty)$ is a bounded continuous function with
\begin{align}\label{eq:IkTk}
\sum_{k=1}^m \int_{I_k} \gamma(z, k) T_k(z) \ud z<\infty.
\end{align} 
For simplicity, we define 
$
f^{\vee}:=f\circ\Pi
$ for any function $f$ defined on $\Gamma$, and rewrite $
\mathbb{H}_\gamma=L^2(\R^2;\gamma^{\vee}(x)\ud x)
$.
It should be noted that $f^{\vee}$ is a well-defined function on $\R^2$ provided that $f(z,k_1)=f(z,k_2)=f(z,k_3)$ whenever $(z,k_1)$, $(z,k_2)$ and $(z,k_3)$ correspond to the same interior vertex of $\Gamma$.
By \eqref{eq:IkTk} and the definition of $T_k(z)$, it holds that 
\begin{equation}\label{eq:L1}
\int_{\mathbb{R}^2} \gamma^{\vee}(x) \ud x =\sum_{k=1}^m \int_{I_k} \oint_{C_k(z)} \frac{\gamma^{\vee}(x)}{|\nabla H(x)|} \ud l_{z, k} \ud z =\sum_{k=1}^m \int_{I_k} \gamma(z, k) T_k(z) \ud z<\infty,
\end{equation}
which shows that $\gamma^{\vee}: \mathbb{R}^2 \rightarrow(0,+\infty)$ is bounded, continuous and integrable. We notice that for any bounded function $\varphi$ on $\R^2$,
\begin{align*}
\int_{\mathbb{R}^2} \varphi(x)\gamma^{\vee}(x) \ud x & =\sum_{k=1}^m \int_{I_k}\gamma(z,k) \oint_{C_k(z)}  \frac{\varphi(x)}{|\nabla H(x)|} \ud l_{z, k} \ud z.
\end{align*}
Hence by defining the formal projection $^\wedge:\varphi\mapsto\varphi^{\wedge}$  via
$$
\varphi^{\wedge}(z, k):=\frac{1}{T_k(z)} \oint_{C_k(z)} \frac{\varphi(x)}{|\nabla H(x)|} \ud l_{z, k}=\oint_{C_k(z)}\varphi(x)\ud\mu_{z,k},\quad (z,k)\in\Gamma,
$$
we have that for any bounded function $\varphi$ on $\R^2$,
$$
\int_{\mathbb{R}^2} \varphi(x)\gamma^{\vee}(x) \ud x  =\sum_{k=1}^m \int_{I_k}\varphi^{\wedge}(z, k)\gamma(z,k) T_k(z) \ud z=\int_{\Gamma}\varphi^{\wedge}(z,k)\ud\nu_{\gamma}(z,k).
$$ 
Here the measure $\nu_\gamma$ defined by $\nu_\gamma(E):=\sum_{k=1}^m\int_{I_k}\mathbb{I}_{E}(z,k)\gamma(z,k) T_k(z) \ud z$ for any Borel measurable set $E\subset\Gamma$ can be seen as a projection of the measure $\gamma^{\vee}(x)\ud x$, where $\mathbb{I}_{E}$ denotes the indicator function on the set $E$.
Moreover, the space $\mathbb{H}_\gamma$ is properly projected to the space
\begin{equation}\label{eq:H1}
\bar{\mathbb{H}}_\gamma:=\left\{f: \Gamma \rightarrow \mathbb{R}: \sum_{k=1}^m \int_{I_k}|f(z, k)|^2 \gamma(z, k) T_k(z) \ud z<\infty\right\}=L^2(\Gamma,\nu_\gamma).
\end{equation}

By \cite[Proposition 3.1]{CF19}, the operators $^\wedge$ and $^\vee$ are contractions, i.e.,
\begin{equation}\label{fvarphi}
\|\varphi^{\wedge}\|_{\bar{\mathbb{H}}_\gamma}\le \|\varphi\|_{\mathbb{H}_\gamma},\quad \|f^{\vee}\|_{\mathbb{H}_\gamma}= \|f\|_{\bar{\mathbb{H}}_\gamma}.
\end{equation}
Given a Hilbert space $V$, denote by $\mathscr{L}(V)$ the space of bounded linear operators from $V$ to $V$.
For every $\mathcal Q \in \mathscr{L}\left(\mathbb{H}_\gamma\right)$ and $f \in \bar{\mathbb{H}}_\gamma$, we define
$
\mathcal Q^{\wedge} f:=\left(\mathcal Q f^{\vee}\right)^{\wedge}.
$
In an analogous way, for any $\mathcal A \in \mathscr{L}(\bar{\mathbb{H}}_\gamma)$ and $\varphi \in \mathbb{H}_\gamma$, we define
$
\mathcal A^{\vee} \varphi:=(\mathcal A \varphi^{\wedge})^{\vee}.
$
Due to \eqref{fvarphi}, it holds that $\mathcal Q^{\wedge}\in \mathscr{L}(\bar{\mathbb{H}}_\gamma)$, $\mathcal A^{\vee} \in \mathscr{L}(\mathbb{H}_\gamma)$ and
\begin{equation*}
\|\mathcal Q^{\wedge}\|_{\mathscr{L}(\bar{\mathbb{H}}_\gamma)}\le \|\mathcal Q\|_{\mathscr{L}(\mathbb{H}_\gamma)},\quad
\|\mathcal A^{\vee}\|_{\mathscr{L}(\mathbb{H}_\gamma)} \leq\|\mathcal A\|_{\mathscr{L}(\bar{\mathbb{H}}_\gamma)}.
\end{equation*}
Similar to \cite{CF19}, we shall make the following assumption on $\gamma$ such that $\gamma^{\vee}$ is admissible with respect to all semigroups $\{S_\epsilon(\cdot )\}_{\epsilon>0}$. 
 \begin{asp}\label{Asp:gamma}
 Let $\gamma(z,k)=\vartheta(z)$ for every $(z,k)\in\Gamma$, where
$\vartheta:[0,\infty)\to(0,\infty)$ is second differentiable. Moreover,
\begin{equation}\label{eq:zvartheta}
|z\vartheta^{\prime\prime}(z)|+|\vartheta^{\prime}(z)|\le C\vartheta(z)\quad \text{ for all } z\in[0,\infty).
\end{equation}
 \end{asp}
 
 According to Hypothesis \ref{asp:H}($i$), there exist some positive constants $C_1,C_2>0$ such that 
$H(x)\le C_1|x|^2+C_2$, which implies that for any $K>0$,
$$\{x\in\R^2:H(x)\ge C_1K^2+C_2\}\subset\{x\in\R^2: |x|\ge K\}.$$
By Hypothesis \ref{asp:H}($iii$), there exists $C_3>0$ such that
$$\{x\in\R^2:|x|\ge C_3\}\subset\{x\in\R^2: H(x)\ge a_1|x|^2\}.$$
Hence for any $z\ge C_1C_3^2+C_2$,
$$\{x\in\R^2:H(x)\ge z\}\subset\{x\in\R^2:H(x)\ge C_1C_3^2+C_2\}\subset\{x\in\R^2: H(x)\ge a_1|x|^2\}.$$
In the sequel, we denote by 
\begin{equation*}
z_0:=\max\left\{\max_{1\le i\le m_1}H(x_i)+1,C_1C_3^2+C_2\right\},
\end{equation*}
where $\{x_i\}_{i=1}^{m_1}$ are the critical points of $H$.
Then $z_0>\max_{1\le i\le m_1}H(x_i)$ and
$H(x)\ge z_0$ implies $H(x)\ge a_1|x|^2$.

  \begin{exam}\label{Ex:gamma-S1}
 Let $\gamma(z,k)=\vartheta(z)$ for every $(z,k)\in\Gamma$, where
$\vartheta:[0,\infty)\to(0,\infty)$ is second differentiable. Moreover, there exist  $c_0>0$ and $\lambda>1$ such that
 $$\vartheta(z)=c_0z^{-\lambda},\quad z\ge z_0.$$ 
\end{exam}
 \begin{exam}\label{Ex:gamma-S}
 Let $\gamma(z,k)=\vartheta(z)$ for every $(z,k)\in\Gamma$, where
$\vartheta:[0,\infty)\to(0,\infty)$ is second differentiable. Moreover, there exist some constants $\lambda>0$ and $c_0>0$ such that
 $$\vartheta(z)=c_0e^{-\lambda(\sqrt{z}-\sqrt{2z_0})},\quad z\ge z_0.$$ 
 \end{exam}

The graph weighted functions $\gamma$ in Examples \ref{Ex:gamma-S1} and \ref{Ex:gamma-S} satisfy Assumption \ref{Asp:gamma} and \eqref{eq:IkTk}.
Indeed, the verification of \eqref{eq:zvartheta} is straightforward for $z\ge z_0\ge1$ by using the explicit formula of $\vartheta$. As for $z\le z_0$, \eqref{eq:zvartheta} follows from \eqref{eq:gamma} and the continuity of $z\vartheta^{\prime\prime}(z)$
and $\vartheta^\prime(z)$. 
In addition,
the integrability of $\gamma^{\vee}=\vartheta\circ H$ 
  follows from the assumption that $H$ grows quadratically at infinity. For instance, for $\gamma$ in Example \ref{Ex:gamma-S1}, we divide $\R^2=\mathcal E\cap \mathcal E^c$ with $\mathcal E:=\{H(x)\ge z_0\}\cap\{|x|\ge1\}$ to obtain
\begin{align*}
\int_{\R^2}\gamma^\vee(x)\ud x&=\int_{\mathcal E}\vartheta(H(x))\ud x+\int_{\mathcal E^c}\vartheta(H(x))\ud x\\
&\le \int_{\{H(x)\ge a_1|x|^2\}\cap\{|x|\ge1\}}c_0H(x)^{-\lambda}\ud x+C\\
&\le  a_1^{-\lambda}c_0\int_{\{|x|\ge1\}}|x|^{-2\lambda}\ud x+C<\infty,
\end{align*}
since $\mathcal E^c=\{H(x)\le z_0\}\cup\{|x|\le1\}$ is a bounded set and $\lambda>1$. Hence \eqref{eq:IkTk} follows from $\gamma^\vee\in L^1(\R^2)$ and \eqref{eq:L1}.

 \begin{lem}
 Under Assumption \ref{Asp:gamma}, there exists some positive constant $c$ such that
 $\Delta \gamma^{\vee}(x)\le c \gamma^{\vee}(x)$ for all $x\in\R^2$.
 \end{lem}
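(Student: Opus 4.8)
The plan is to reduce the statement to an elementary estimate on $H$ via the chain rule. Since $\gamma^{\vee}=\vartheta\circ H$ with $\vartheta\in C^2$ (Assumption \ref{Asp:gamma}) and $H\in C^4$ (Hypothesis \ref{asp:H}($i$)), the function $\gamma^{\vee}$ is genuinely $C^2$ and
\begin{equation*}
\Delta\gamma^{\vee}(x)=\vartheta''(H(x))\,|\nabla H(x)|^2+\vartheta'(H(x))\,\Delta H(x),\qquad x\in\R^2.
\end{equation*}
Thus it suffices to dominate each of the two summands by a constant multiple of $\vartheta(H(x))=\gamma^{\vee}(x)$.

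The first summand is immediate: since $H$ has bounded second derivative by Hypothesis \ref{asp:H}($i$), $\Delta H$ is bounded on $\R^2$, say $|\Delta H|\le M_0$, and combining this with $|\vartheta'(z)|\le C\vartheta(z)$ from \eqref{eq:zvartheta} gives $\vartheta'(H(x))\Delta H(x)\le CM_0\,\gamma^{\vee}(x)$. For the second summand I use the other half of \eqref{eq:zvartheta}, namely $|\vartheta''(z)|\le C\vartheta(z)/z$ for $z>0$, to write, at every $x$ with $H(x)>0$,
\begin{equation*}
\vartheta''(H(x))\,|\nabla H(x)|^2\le C\,\vartheta(H(x))\,\frac{|\nabla H(x)|^2}{H(x)}.
\end{equation*}
Hence everything comes down to the claim
\begin{equation*}
M_1:=\sup_{x\in\R^2,\,H(x)>0}\frac{|\nabla H(x)|^2}{H(x)}<\infty,
\end{equation*}
which is the main obstacle.

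To prove this claim I split $\R^2$ at the threshold $C_3$ introduced before the statement. For $|x|\ge C_3$ one has $H(x)\ge a_1|x|^2$ by Hypothesis \ref{asp:H}($iii$), while the boundedness of the Hessian forces the linear bound $|\nabla H(x)|\le|\nabla H(0)|+M|x|$; hence $|\nabla H(x)|^2/H(x)\le C(1+|x|^2)/(a_1|x|^2)$ stays bounded there. On the compact set $\{|x|\le C_3\}$ the function $H$ is continuous and strictly positive except at its global minimizers; since a minimizer is a critical point and the critical values are distinct (Hypothesis \ref{asp:H}($ii$)) with $\min H=0$, there is a unique such point $x_\ast$, lying in $\{|x|<C_3\}$ because $H$ is positive on $\{|x|\ge C_3\}$. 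Away from $x_\ast$ the ratio is continuous, hence bounded on the compact remainder; near $x_\ast$ I expand by Taylor, using $\nabla H(x_\ast)=0$ and the non-degenerate, positive definite Hessian $Q=\nabla^2H(x_\ast)$ (positive definite because $x_\ast$ is a minimum), to get $|\nabla H(x)|^2=O(|x-x_\ast|^2)$ and $H(x)\ge(\lambda_{\min}(Q)/2+o(1))|x-x_\ast|^2$, so the ratio remains bounded as $x\to x_\ast$. This establishes $M_1<\infty$.

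Finally, the single point $x_\ast$ where $H=0$ (and where the bound $|\vartheta''|\le C\vartheta/z$ is unavailable) causes no trouble, because there $\nabla H(x_\ast)=0$ makes the second summand vanish. Collecting the two estimates yields $\Delta\gamma^{\vee}(x)\le(CM_0+CM_1)\gamma^{\vee}(x)$, which is the asserted bound with $c=C(M_0+M_1)$. The only genuinely delicate point is the boundedness of $|\nabla H|^2/H$ at the minimizer, resolved by the non-degeneracy in Hypothesis \ref{asp:H}($ii$); everywhere else the estimate is a routine combination of the growth conditions in Hypothesis \ref{asp:H}($iii$) with the structural assumption \eqref{eq:zvartheta}.
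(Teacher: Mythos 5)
Your proof is correct, but it handles the difficult region differently from the paper, and the difference is worth noting. Both arguments start from the same chain-rule identity and treat the far field in essentially the same way: for $H(x)\ge z_0$ (equivalently, large $|x|$) one has $|\nabla H(x)|^2\lesssim |x|^2\lesssim H(x)$, after which \eqref{eq:zvartheta} finishes the estimate — your ratio bound on $\{|x|\ge C_3\}$ is exactly this computation. The divergence is on the bounded region. The paper splits at the level set $\{H\le z_0\}$ and argues softly: this set is bounded, so $\Delta\gamma^{\vee}$ is bounded there by a constant, while $\gamma^{\vee}=\vartheta\circ H\ge \inf_{[0,z_0]}\vartheta=C(z_0)>0$ by \eqref{eq:gamma}; the inequality then holds trivially, with no need to control $|\nabla H|^2/H$ near the minimizer of $H$ and no use of the non-degeneracy of the Hessian. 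You instead prove the stronger global statement $\sup_{H>0}|\nabla H(x)|^2/H(x)<\infty$, which forces you to identify the unique zero $x_\ast$ of $H$ (via Hypothesis \ref{asp:H}($ii$)) and run a Taylor expansion with the positive definite Hessian $\nabla^2H(x_\ast)$. This costs extra local analysis but buys something real: your argument uses only the inequality \eqref{eq:zvartheta} and the global boundedness of $\nabla^2 H$, whereas the paper's bound ``$\Delta\gamma^{\vee}\le C$ on $\{H\le z_0\}$'' implicitly requires $\vartheta''$ to be bounded on $[0,z_0]$ (i.e.\ the continuity of $\vartheta''$ that the paper invokes when checking its examples); under \eqref{eq:zvartheta} alone, $\vartheta''$ could blow up as $z\to0$, and your ratio estimate $|\vartheta''(H)||\nabla H|^2\le C\vartheta(H)\,|\nabla H|^2/H$ is precisely what neutralizes that. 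So your route is slightly longer but marginally more robust, and it yields the gradient-ratio bound as a reusable intermediate fact.
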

 \begin{proof}
  Under Assumption \ref{Asp:gamma}, $\gamma^{\vee}=\vartheta\circ H$.
 The positivity and continuity of $\vartheta$ imply that 
\begin{equation}\label{eq:gamma}\inf_{0\le H(x)\le z_0}\gamma^{\vee}(x)=\inf_{0\le H(x)\le z_0}\vartheta(H(x))=\inf_{0\le z\le z_0}\vartheta(z)=:C(z_0)>0.
\end{equation}
Since $\{x\in\R^2:0\le H(x)\le z_0\}$ is a bounded set, we have that for $ H(x)\le z_0$,
\begin{equation*}
\Delta \gamma^{\vee}(x)=\vartheta^{\prime \prime}(H(x))|\nabla H(x)|^2+\vartheta^{\prime}(H(x)) \Delta H(x)\le C\le CC(z_0)^{-1}\gamma^{\vee}(x).
\end{equation*}
For $H(x)\ge z_0$, we have $H(x)\ge a_1|x|^2$, $|\nabla H(x)|\le C|x|$ and $|\Delta H(x)|\le C$. Hence  it follows from Assumption \ref{Asp:gamma} that for $H(x)\ge z_0$,
\begin{align*}
\Delta \gamma^{\vee}(x)&=\vartheta^{\prime \prime}(H(x))|\nabla H(x)|^2+\vartheta^{\prime}(H(x)) \Delta H(x)\\
&\le C^2|\vartheta^{\prime \prime}(H(x))||x|^2+C|\vartheta^{\prime}(H(x))|\\
&\le C^2a_1^{-1}|\vartheta^{\prime \prime}(H(x))H(x)|+|\vartheta^{\prime}(H(x))|
\le C\gamma^{\vee}(x).
\end{align*}
The proof is completed.
 \end{proof}

Without pointing it out explicitly, we always assume that  Assumption \ref{Asp:gamma} holds throughout this section.
Then
 $\{S_\epsilon(t)\}_{\epsilon\in(0,1],t\in[0,\mathsf{T}]}$ is a bounded linear operator on $\mathbb{H}_\gamma$ (see \cite[Proposition 4.1]{CF19} or Lemma \ref{prop:Lrho2} below), i.e., 
\begin{equation}\label{eq:Seps}
\|S_\epsilon(t)\|_{\mathscr{L}(\mathbb{H}_\gamma)}\le C(\mathsf{T})\quad \forall~\epsilon\in(0,1],t\in[0,\mathsf{T}].
\end{equation}
According to \cite[Appendix B]{CF19}, under
Assumption \ref{Asp:gamma} and the following condition
\begin{equation}\label{eq:Tneq0}
\frac{\ud}{\ud z}T_k(z)\neq 0,\quad (z,k)\in\Gamma,
\end{equation}
it holds that
 for any $\varphi \in \mathbb{H}_{\gamma}$ and $0<\tau_0 < \mathsf{T}$, 
\begin{equation}\label{eq:Seps-S}
\lim _{\epsilon \rightarrow 0} \sup _{t \in[\tau_0, \mathsf{T}]}\left\|S_\epsilon(t) \varphi-\bar{S}(t)^{\vee} \varphi\right\|_{\mathbb{H}_\gamma}=\lim _{\epsilon \rightarrow 0} \sup _{t \in[\tau_0, \mathsf{T}]}\left\|\left(S_\epsilon(t) \varphi\right)^{\wedge}-\bar{S}(t) \varphi^{\wedge}\right\|_{\bar{\mathbb{H}}_\gamma}=0,
\end{equation}
and the semigroup $\{\bar{S}(t)\}_{t>0}$ generated by $\bar{\mathcal L}$ satisfies
\begin{equation}\label{eq:St}
\|\bar{S}(t)\|_{\mathscr{L}(\bar{\mathbb{H}}_\gamma)}\le C(\mathsf{T})\quad \forall~t\in[0,\mathsf{T}].
\end{equation}

Since the Hamiltonian $H$ grows essentially quadratically at infinity, a heuristic example of the Hamiltonian is $H(x)=|x|^2$, $x\in\R^2$, which implies that $\Gamma=[0,\infty)$.
 However, for $H(x)=|x|^2$, the function $T(z)\equiv \pi$ is a constant, which violates \eqref{eq:Tneq0}. To address this problem, we propose the following assumption for the case of $\Gamma=[0,\infty)$, which will be mainly used in the consistency analysis of the limiting scheme and the limiting equation \eqref{eq:utzk} in section \ref{S5}.

\begin{asp}\label{Asp:zeta}
Let $H(x)=|x|^2+\zeta(|x|^2)$, where $\zeta:[0,\infty)\to\R$ is a third differentiable function with $\zeta(0)=0$.
Assume that for some $0\le r_0<1$ and $\tilde{r}_0>0$,
\begin{gather}\label{eq:zeta1}
-r_0\le \zeta^\prime(z)\le \tilde{r}_0\quad\forall~z\in[0,\infty).
\end{gather}
Moreover, there exist $r_1<1-r_0$ and $r_2,r_3>0$ such that 
\begin{gather}\label{eq:zeta2}
|z\zeta^{\prime\prime}(z)|\le r_1\quad\text{ for $z$ large enough},\\\label{eq:zeta3}
0<|\zeta^{\prime\prime}(z)|\le r_2,\quad |z\zeta^{\prime\prime\prime}(z)|\le r_3\quad\forall~z\in[0,\infty).
\end{gather}
\end{asp}

The conditions \eqref{eq:zeta1}-\eqref{eq:zeta3} are mainly imposed such that $H(x)=|x|^2+\zeta(|x|^2)$ satisfies Hypothesis \ref{asp:H} and \eqref{eq:Tneq0}. Indeed, by \eqref{eq:zeta1} and
$\nabla H(x)=2x(1+\zeta^\prime(|x|^2))$, we have that $\nabla H(x)=0$ if and only if $x=0\in\R^2$, and thus
$x=0$ is the unique critical point of $H$.
Since
\begin{align*}
 \nabla^2H(x)
 &=2\left(1+\zeta^\prime(|x|^2)\right)I+4\zeta^{\prime\prime}(|x|^2)x\otimes x\\
 &=\left(\begin{array}{cc}2+4x_1^2 \zeta^{\prime\prime}(|x|^2)+2\zeta^\prime(|x|^2)& 4x_1x_2 \zeta^{\prime\prime}(|x|^2) \\4x_1x_2 \zeta^{\prime\prime}(|x|^2) & 2+4x_2^2 \zeta^{\prime\prime}(|x|^2)+2\zeta^\prime(|x|^2)\end{array}\right),
\end{align*}
one could verify by using \eqref{eq:zeta1}-\eqref{eq:zeta2} that
$\nabla^2H(x)$ is uniformly bounded in $\R^2$, $\nabla^2 H(0)=2(1+\zeta^\prime(0))I$
is non-degenerate, and
\begin{align*}
H(x)&
\ge(1-r_0)|x|^2,\quad x\in\R^2,\\
|\nabla H(x)|&=2|x|(1+\zeta^\prime(|x|^2))\ge 2(1-r_0)|x|,\quad x\in\R^2,\\
 \Delta H(x)&=4\left(1+|x|^2\zeta^{\prime\prime}(|x|^2)+\zeta^\prime(|x|^2)\right)\ge 4(1-r_0-r_1)>0\text{ for $|x|$ large enough}.
\end{align*}

By \eqref{eq:zeta1}, the map $z\mapsto (Id+\zeta)(z)=z+\zeta(z)$ is invertible, whose inverse is denoted by $F$, i.e., $F(z):=(Id+\zeta)^{-1}(z)$, $z\in[0,\infty)$.
Given $z\in[0,\infty)$, the level set of $H$ is
 \begin{align*}
 C(z)&=\left\{x\in\R^2:|x|=\sqrt{F(z)}\right\}\\
 &\phantom{:}=\left\{x=(x_1,x_2): x_1=\sqrt{F(z)}\cos\theta,x_2=\sqrt{F(z)}\sin\theta,\theta\in [0,2\pi]\right\},
 \end{align*} 
 on which $|\nabla H(x)|=2\sqrt{F(z)}\left(1+\zeta^\prime(F(z))\right)$ and $\ud l_z=\sqrt{F(z)}\ud \theta$. 
Moreover, 
\begin{gather}\label{eq:TA}
\left\{
\begin{split}
 T(z)&=\oint_{C(z)} \frac{1}{|\nabla H(x)|} \ud l_{z}
=\frac{\pi}{1+\zeta^\prime(F(z))},\\
 A(z)&=\frac12\oint_{C(z)} |\nabla H(x)| \ud l_{z}
=2\pi F(z)\left(1+\zeta^\prime(F(z))\right),\quad z\in[0,\infty).
\end{split}
\right.
\end{gather}
 By virtue of \eqref{eq:zeta1} and $F^\prime(z)=(1+\zeta^\prime(F(z)))^{-1}$, we have
\begin{align}\label{eq:T'}
T^\prime(z)
=-\frac{\pi\zeta^{\prime\prime}(F(z)) F^\prime(z)}{(1+\zeta^\prime(F(z)))^2}=-\frac{\pi\zeta^{\prime\prime}(F(z)) }{(1+\zeta^\prime(F(z)))^3}.
\end{align}
Therefore, a sufficient condition for \eqref{eq:Tneq0} is that
$\zeta^{\prime\prime}(z)\neq0$ for all $z\in[0,\infty)$. Hence the restriction $|\zeta^{\prime\prime}(z)|>0$ in \eqref{eq:zeta3} is imposed so that \eqref{eq:Tneq0} is fulfilled. 

It can be seen that  the Hamiltonian $H(x)=a|x|^2+a\zeta(|x|^2)$ with $a>0$ and $\zeta$ satisfying Assumption \ref{Asp:zeta} also fulfills Hypothesis \ref{asp:H} and \eqref{eq:Tneq0}.  Under Assumption \ref{Asp:zeta}, the limiting equation
 \eqref{eq:utzk} reduces to an SPDE on the half line
\begin{equation}\label{eq:utzk-p}
\left\{\begin{array}{l}
\partial_t \bar{u}(t, z)=\bar{\mathcal{L}} \bar{u}(t, z)+b(\bar{u}(t, z))+g(\bar{u}(t, z)) \partial_t \bar{\mathcal{W}}(t, z), \quad t\in[0,\mathsf T], \\
\bar{u}(0, z)=\psi^{\wedge}(z),\quad z\in[0,\infty)
\end{array}\right.
\end{equation}
with $\bar{\mathcal L}=\frac{A(z)}{T(z)}\partial_{zz}+\frac{A^\prime(z)}{T(z)}\partial_{z}$, where the functions $A$ and $T$ are given by \eqref{eq:TA}. 
\begin{exam}Below are some examples of $\zeta$  satisfying Assumption \ref{Asp:zeta}.
\begin{enumerate}
\item[1)]  Given $\alpha_0>0$, the function $\zeta(z)=1-e^{-\alpha_0 z}$, $z\in[0,\infty)$, satisfies Assumption \ref{Asp:zeta}. Indeed, it can be readily shown that 
\eqref{eq:zeta1} and \eqref{eq:zeta3} hold with $r_0=0$, $\tilde r_0=\alpha_0$, $r_2=\alpha_0^2$ and $r_3=\alpha_0^2e^{-1}$, while \eqref{eq:zeta2} is a consequence of the following fact 
$$|z\zeta^{\prime\prime}(z)|=\alpha_0^2ze^{-\alpha_0 z}\to 0\quad \text{as } z\to \infty.$$
\item[2) ] For any $\alpha_0\in(0,1)$ and $\beta_0>0$, the function $\zeta(z)=\beta_0(1+z)^{\alpha_0}-\beta_0$, $z\in[0,\infty)$ satisfies Assumption \ref{Asp:zeta}. Indeed, for any $z\in[0,\infty)$,
$\zeta^\prime(z)=\beta_0\alpha_0(1+z)^{\alpha_0-1}\in(0,\beta_0\alpha_0)$, which gives \eqref{eq:zeta1} with $r_0=0$ and $\tilde{r}_0=\beta_0\alpha_0$. The inequality \eqref{eq:zeta2} follows from the fact that
$|z\zeta^{\prime\prime}(z)|=\beta_0\alpha_0(1-\alpha_0)z(1+z)^{\alpha_0-2}\to 0$ as $z\to\infty$. Besides, the functions
$|\zeta^{\prime\prime}(z)|=\beta_0\alpha_0(1-\alpha_0)(1+z)^{\alpha_0-2}$ and $|z\zeta^{\prime\prime\prime}(z)|=\beta_0\alpha_0(1-\alpha_0)(2-\alpha_0)(1+z)^{\alpha_0-3}$ are uniformly bounded on $[0,\infty)$, which implies \eqref{eq:zeta3}.

\item[3)]
 Let $\alpha_0\in(0,1)$ and denote
 \begin{equation*}
\widetilde\zeta(z):=\begin{cases}\zeta_0 z+\frac12\zeta_1 z^2+\frac13\zeta_2z^3+\frac14\zeta_3z^4,&\quad 0\le z\le z_1,\\
z^{\alpha_0},&\quad z\ge z_1,
\end{cases}
\end{equation*}
where $z_1>0$, $\zeta_i$, $i=0,1,2,3$, are to be determined.
Assume that
\begin{equation}\label{eq:Azb}
\left(\begin{array}{cccc} z_1 &\frac12z_1^2&\frac13z_1^3&\frac14z_1^4\\1& z_1 & z_1^2 & z_1^3 \\0 & 1 & 2z_1 & 3z_1^2 \\0 & 0 & 2 & 6z_1\end{array}\right)\left(\begin{array}{c}\zeta_0 \\\zeta_1 \\\zeta_2 \\\zeta_3\end{array}\right)=\left(\begin{array}{c}z_1^{\alpha_0} \\ \alpha_0 z_1^{\alpha_0-1} \\\alpha_0(\alpha_0-1) z_1^{\alpha_0-2} \\ \alpha_0(\alpha_0-1) (\alpha_0-2) z_1^{\alpha_0-3}\end{array}\right),
\end{equation}
so that the function $\widetilde{\zeta}$ is third continuously differentiable.
For any fixed $z_1>0$ and $\alpha_0\in(0,1)$, one can find the solution of \eqref{eq:Azb} as follows 
\begin{align*}
\zeta_0&=\left(1-(\alpha_0-1)(\frac16\alpha_0^2-\frac43\alpha_0+3)\right)z_1^{\alpha_0-1},\\
\zeta_1&=(\alpha_0-4)(\alpha_0-3)(\alpha_0-1)z_1^{\alpha_0-2}<0,\\
\zeta_2&=\frac{3}{2}(4-\alpha_0)(\alpha_0-1)(\alpha_0-2)z_1^{\alpha_0-3}>0,\\
\zeta_3&=\frac{2}{3}(\alpha_0-1)(\alpha_0-2)(\alpha_0-3)z_1^{\alpha_0-4}<0.
\end{align*}
The function $\widetilde\zeta^{\prime\prime}(z)=3\zeta_3z^2+2\zeta_2z+\zeta_1$, $z\in[0,\infty)$, attains its maximum 
$$\frac{3\zeta_1\zeta_3-\zeta_2^2}{3\zeta_3}=\frac{\alpha_0(\alpha_0-4)(\alpha_0-1)^2(\alpha_0-2)(6-\alpha_0)}{12\zeta_3}z_1^{2\alpha_0-6}<0,$$
which ensures that $\widetilde\zeta^{\prime\prime}(z)$ remains negative on $[0,z_1]$.

For any $\alpha_0\in(0,1)$, there exist $\beta_0,z_1>0$ such that $\zeta(z)=\zeta_{\beta_0,z_1}(z):=\beta_0\widetilde\zeta(z)$ satisfies Assumption \ref{Asp:zeta}. Indeed, we can first fix
$z_1>1$ sufficiently large such that
\begin{gather*}
|\widetilde\zeta^\prime(z)|=\alpha_0 z^{\alpha_0-1}\le\alpha_0 z_1^{\alpha_0-1}<1\quad  \forall~z\ge z_1.
\end{gather*}
 Then fix a sufficiently small
 $\beta_0\in(0,1)$ such that for any $z\in[0,z_1]$,
\begin{gather*}
|\zeta_{\beta_0,z_1}^\prime(z)|\le\beta_0\sup_{z\in[0,z_1]}|\widetilde\zeta^\prime(z)|\le r_0:=\alpha_0 z_1^{\alpha_0-1},
\end{gather*}
which implies that $\zeta=\zeta_{\beta_0,z_1}$ satisfies \eqref{eq:zeta1}. Notice that for any $z\ge z_1$,
$$|z\widetilde\zeta^{\prime\prime}(z)|=\alpha_0(1-\alpha_0) z^{\alpha_0-1} \le \alpha_0(1-\alpha_0) z_1^{\alpha_0-1} <1-r_0,$$
that is, \eqref{eq:zeta2} holds for $\zeta=\zeta_{\beta_0,z_1}$ with  $r_1=\alpha_0(1-\alpha_0) z_1^{\alpha_0-1}$. Moreover, one can also verify that 
 \eqref{eq:zeta3} holds for $\zeta=\zeta_{\beta_0,z_1}$. 
\end{enumerate}
\end{exam}

\subsection{Stochastic RDA equation}\label{S:SRDA}
Following the framework of \cite{DP14}, we recast the stochastic RDA equation \eqref{eq:SPDE} into the following compact form
\begin{equation*}
\ud u_{\epsilon}(t)=\mathcal L_\epsilon u_{\epsilon}(t)\ud t
 +B(u_{\epsilon}(t))\ud t+G(u_{\epsilon}(t))\ud\mathcal{W}(t), \quad t\in[0,\mathsf T]
\end{equation*}
with the initial value $u_{\epsilon}(0)=\psi$. Here,
$B$ (resp. $G$) is the Nemytskii operator
associated with $b$ (resp. $g$), that is, for $v_1,v_2\in \mathbb{H}_\gamma$,
\begin{align*}
B(v_1)(x)=b(v_1(x)),\quad G(v_1)(v_2)(x)=g(v_1(x))v_2(x),\quad x\in\R^2.
\end{align*}

 Assume that the random force $\mathcal W$ in the stochastic RDA equation \eqref{eq:SPDE} is a spatially homogeneous Wiener process 
 with a finite non-negative definite symmetric spectral measure $\mu$. Namely, $\mathcal W$ is a Gaussian random field on $[0,\infty)\times\R^2$ defined on the stochastic basis $(\Omega,\mathscr F,\{\mathscr F_t\}_{t\ge0},\mathbb P)$ such that 
 \begin{enumerate}
 \item[$1$)] the mapping $(t,x)\mapsto\mathcal W(t,x)$ is continuous with respect to $t$ and measurable with respect to $(t,x)$, $\mathbb P$-a.s.;
 \item[$2$)] for every $x\in\R^2$, $\{\mathcal{W}(t,x),t\ge0\}$ is a 1-dimensional Brownian motion;
 \item[$3$)] for every  $t,s\ge 0$ and $x,y\in\R^2$,
 \begin{gather}\label{eq:Lam}
 \E\left[\mathcal W(t,x)\mathcal W(s,y)\right]= (t\wedge s)\Lambda(x-y),
  \end{gather}
  where $\Lambda$ is the Fourier transform of $\mu$, i.e.,
 $ \Lambda(x)=\int_{\R^2}e^{\textup{i}x\cdot\xi }\mu(\ud \xi)$ for $x\in\R^2$.
 Here, $\textup{i}=\sqrt{-1}$ is the imaginary unit. 
 \end{enumerate}

 \begin{exam} Below are some examples of the spectral measure $\mu$ (see e.g., \cite{BT10}).
\begin{enumerate}
\item[](i)
 If $\mu(\ud \xi)=|\xi|^{-r} \ud \xi$ for some $0<r<2$, then $\Lambda$ is the Riesz kernel of order $r$:
$$
\Lambda(x)=\frac{\mathsf{\Gamma}((2-r) / 2)}{2^{r}\mathsf{\Gamma}(r / 2)\pi}|x|^{-2+r},\quad x\in\R^2,
$$
where $\mathsf{\Gamma}$ denotes the Gamma function.

\item[] (ii) If $\mu(\ud \xi)=(1+|\xi|^2)^{-\frac{r}{2}} \ud \xi$ for some $r>0$, then $\Lambda$ is the Bessel kernel of order $r$:
$$
\Lambda(x)=(4 \pi)^{r / 2} \mathsf{\Gamma}(r / 2) \int_0^{\infty} y^{(r-2) / 2-1} e^{-y} e^{-|x|^2 /(4 y)} \ud y,\quad x\in\R^2.
$$

\item[] (iii) If $\mu(\ud \xi)=e^{-\frac12\pi^2 r|\xi|^2} \ud \xi$ for some $r>0$, then $\Lambda$ is the heat kernel of order $r$:
$$
\Lambda(x)=(2 \pi r)^{-1} e^{-\frac{|x|^2}{2 r}},\quad x\in\R^2 .
$$

\item[] (iv) If $\mu(\ud \xi)=e^{-4 \pi^2 r|\xi|} \ud \xi$ for some $r>0$, then $\Lambda$ is the Poisson kernel of order $r$:
$$
\Lambda(x)=\pi^{-1} r(|x|^2+r^2)^{-1},\quad x\in\R^2.
$$
\end{enumerate}
\end{exam}

Let $\mathcal S(\R^2)$ be the set of smooth functions on $\R^2$ with rapid decrease.
The operator $\mathscr{Q}:\mathcal S(\R^2)\times\mathcal S(\R^2)\to\R$ defined by
$$\mathscr{Q}(\varphi_1,\varphi_2):=\int_{\R^2}\Lambda(x)\int_{\R^2}\varphi_1(y)\varphi_2(y-x)\ud y\ud x,\quad\varphi_1,\varphi_2\in\mathcal S(\R^2),$$
is called the covariance form of $\mathcal W$. We denote by $RK$ the reproducing kernel space of $\mathcal W$.
By \cite[Section 1.2]{PZ97}, the space $RK$ can be identified with the dual of $\mathcal{S}_q $, where $\mathcal{S}_q $ is the completion of the set $\mathcal S(\R^2)/ker \mathscr{Q}$ with respect to the norm $q([\varphi]):=\sqrt{\mathscr{Q}(\varphi,\varphi)}$.
Denote by $L^2_{(s)}(\R^2,\ud \mu)$ the subspace of the Hilbert space $L^2(\R^2,\ud \mu;\mathbb{C})$ consisting of all functions $\varphi$ such that $ \overline{\varphi(-x)}=\varphi(x)$ for $x \in\R^2$. 
 An orthonormal basis for the reproducing kernel space $RK$  is given by $\{\widehat{\mathfrak{u}_j\mu}\}_{j\in\mathbb{N}_+}$ (cf. \cite[Proposition 1.2]{PZ97}), where $\{\mathfrak{u}_j\}_{j\in\mathbb{N}_+}$ is a complete orthonormal basis of the Hilbert space $L^2_{(s)}(\R^2,\ud \mu)$ and
$$\widehat{\mathfrak{u}_j\mu}(x)=\int_{\R^2}e^{\textup{i}x\cdot\xi} \mathfrak{u}_j(\xi)\mu(\ud\xi),\quad x\in\R^2.$$
This implies that $\mathcal W(t,x)$ has the following Karhunen--Loève expansion
\begin{align}\label{eq:KL}\mathcal W(t,x)=\sum_{j=1}^\infty\widehat{\mathfrak{u}_j\mu}(x)\beta_j(t),\quad t \geq 0,~x\in\R^2,
\end{align}
where $\{\beta_j\}_{j\in\mathbb N_+}$ is a sequence of independent Brownian motions defined on the stochastic basis $(\Omega,\mathscr F,\{\mathscr F\}_{t\ge0},\mathbb P)$. 

For a Hilbert space $(V,\|\cdot\|_V)$, let $\mathscr{L}_2(RK,V)$ be the space of Hilbert--Schmidt operators from $RK$ to $V$ endowed with the norm $\|\cdot\|_{\mathscr{L}_2(RK,V)}:=(\sum_{j=1}^\infty\|\cdot \widehat{\mathfrak{u}_j\mu}\|^2_{V})^{\frac12}$. 
 For any $p\ge2$, $0\le s_1<s_2\le\mathsf{T}$ and any predictable stochastic process $\Phi:[0,\mathsf{T}]\times\Omega\to\mathscr{L}_2(RK,\mathbb{H}_\gamma)$, we have the following Burkholder inequality (see e.g., \cite{DP14})
\begin{align}\label{eq:BDG}
\E\left[\Big\|\int_{s_1}^{s_2}\Phi(\sigma) \ud \mathcal{W}(\sigma)\Big\|_{\mathbb{H}_\gamma}^p\right]\lesssim\E\left[\Big(\int_{s_1}^{s_2}\left\| \Phi(\sigma) \right\|_{\mathscr{L}_2(RK,\mathbb{H}_\gamma)}^2\ud \sigma\Big)^{\frac{p}{2}}\right],
\end{align}
provided that the right-hand side is finite.

The Lipschitz continuity of $b$ implies
$$\|B(v_1)-B(v_2)\|_{\mathbb{H}_\gamma}\le C\|v_1-v_2\|_{\mathbb{H}_\gamma}\quad\forall~ v_1,v_2\in \mathbb{H}_\gamma.$$
Since
$\{\mathfrak{u}_j\}_{j\in\mathbb{N}_+}$ is a complete orthonormal basis of the Hilbert space $L^2_{(s)}(\R^2,\ud \mu)$, by the Parseval inequality, for any $x\in\R^2$,
\begin{equation}\label{eq:ujmu0}\sum_{j=1}^\infty|\widehat{\mathfrak{u}_j\mu}(x)|^2=\sum_{j=1}^\infty\left|\int_{\R^2}e^{\textup{i}x\cdot\xi}\mathfrak{u}_j(\xi)\mu(\ud\xi)\right|^2=\mu(\R^2)<\infty.
\end{equation}
Hence 
 for any $v_1,v_2\in \mathbb{H}_\gamma$,
\begin{align*}
 \|G(v_1)-G(v_2)\|_{\mathscr{L}_2(RK,\mathbb{H}_\gamma)}^2&=\sum_{j=1}^\infty\int_{\R^2}\big|(g(v_1(x))-g(v_2(x)))\widehat{\mathfrak{u}_j\mu}(x)\big|^2\gamma^{\vee}(x)\ud x\\
&\le\mu(\R^2)\int_{\R^2}\big|g(v_1(x))-g(v_2(x))\big|^2\gamma^{\vee}(x)\ud x\\
&\lesssim\|v_1-v_2\|_{\mathbb{H}_\gamma}^2,
\end{align*}
where we also used the Lipschitz continuity of $g$. Since $\gamma^{\vee}\in L^1(\R^2)$, it can be seen that $G$ maps $\mathbb{H}_\gamma$ into $\mathscr{L}_2(RK,\mathbb{H}_\gamma)$. Indeed,
\begin{align}\label{eq:Gv}
\|G(v_1)\|_{\mathscr{L}_2(RK,\mathbb{H}_\gamma)}^2
&=\mu(\R^2)\int_{\R^2}|g(v_1(x))|^2\gamma^{\vee}(x)\ud x\\\notag
&\lesssim\int_{\R^2}\left(1+|v_1(x)|^2\right)\gamma^{\vee}(x)\ud x\lesssim1+\|v_1\|_{\mathbb{H}_\gamma}^2\quad\forall~v_1\in \mathbb{H}_\gamma.
\end{align}

Taking advantage of  the Lipschitz continuity of $B$ and $G$, one has that
for any initial value $\psi\in \mathbb{H}_\gamma$, 
there exists a unique mild solution $u_\epsilon$ of \eqref{eq:SPDE} (see also \cite[Theorem 2.1]{PZ97}), i.e., for every $t\ge0$,
$$
u_\epsilon(t)=S_\epsilon(t)\psi+\int_0^t S_\epsilon(t-s) B(u_\epsilon(s)) \ud s+\int_0^t S_\epsilon(t-s) G(u_\epsilon(s)) \ud \mathcal{W}(s).
$$
Moreover,  for every $\epsilon>0$ and $p\ge1$ (cf. \cite[formula (5.8)]{CF19}),
\begin{align}\label{eq:uHp}
\mathbb{E}\bigg[\sup_{t\in[0,\mathsf{T}]}\|u_{\epsilon}(t)\|^p_{\mathbb{H}_\gamma}\bigg]\le C(p,\mathsf{T},c).
\end{align}
\subsection{SPDE on graph}\label{S:SPDE}

We rewrite the SPDE on graph \eqref{eq:utzk} into the following compact form
\begin{equation*}
\ud \bar{u}(t)=\bar{\mathcal L}\bar{u}(t)\ud t
 +B(\bar{u}(t))\ud t+G(\bar{u}(t))\ud\bar{\mathcal{W}}(t), \quad t\in[0,\mathsf T]
\end{equation*}
with the initial value $\bar{u}(0)=\psi^\wedge$. Here,
$B$ (resp. $G$) is the Nemytskii operator
associated with $b$ (resp. $g$), that is, for $\bar{v}_1,\bar{v}_2\in \bar{\mathbb{H}}_\gamma$,
\begin{align*}
B(v_1)(z,k)=b(v_1(z,k)),\quad G(v_1)(v_2)(z,k)=g(v_1(z,k))v_2(z,k),\quad (z,k)\in\Gamma.
\end{align*}
Concerning the noisy forcing $\bar{\mathcal{W}}$, it is defined by 
$$
\bar{\mathcal{W}}(t, z, k)=\sum_{j=1}^{\infty}(\widehat{\mathfrak{u}_j \mu})^{\wedge}(z, k) \beta_j(t), \quad t \geq 0 ,\quad(z, k) \in \Gamma
$$
so that $\mathcal{W}(t,\cdot)^{\wedge}(z,k)=\bar{\mathcal{W}}(t,z,k)$ formally. By the Karhunen--Loève expansion of $\bar{\mathcal{W}}$, the sequence $\{(\widehat{\mathfrak{u}_j \mu})^{\wedge}\}_{j=1}^\infty$ forms an orthonormal basis of the reproducing kernel space $\overline{RK}$ of $\bar{\mathcal{W}}$.
We note that the Burkholder inequality \eqref{eq:BDG} also holds with $RK$, $\mathbb{H}_\gamma$ and $\mathcal W$  replaced by $\overline{RK}$, $\bar{\mathbb{H}}_\gamma$ and $\bar{\mathcal W}$, respectively.

Using the Cauchy--Schwarz inequality, $(\widehat{\mathfrak{u}_j\mu})^\wedge(z,k)$=$\oint_{C_k(z)}\widehat{\mathfrak{u}_j\mu}(x)\ud\mu_{z,k}$ and
 \eqref{eq:ujmu0},
\begin{align}\label{eq:ujmu1} 
\sum_{j=1}^\infty|(\widehat{\mathfrak{u}_j\mu})^\wedge(z,k)|^2
\le \sum_{j=1}^\infty\oint_{C_k(z)}\left|\widehat{\mathfrak{u}_j\mu}(x)\right|^2\ud\mu_{z,k}=
\mu(\R^2),\quad (z,k)\in\Gamma,
\end{align}
since $\mu_{z,k}$ is a probability measure supported on $C_k(z)$. By \eqref{eq:ujmu1}, for any $\bar{v}_1,\bar{v}_2\in\bar{\mathbb{H}}_\gamma$,
\begin{align*}
&\quad 
\sum_{j=1}^{\infty}\|(G(\bar{v}_1)-G(\bar{v}_2))(\widehat{\mathfrak{u}_j\mu})^{\wedge}\|^2_{\bar{\mathbb{H}}_\gamma}\\
&=\sum_{j=1}^{\infty}\sum_{k=1}^m\int_{I_k}\left|(g(\bar{v}_1(z,k))-g(\bar{v}_2(z,k)))(\widehat{\mathfrak{u}_j\mu})^\wedge(z,k)\right|^2\gamma(z,k)T_k(z)\ud z\\
&\le\mu(\R^2)\sum_{k=1}^m\int_{I_k}|g(\bar{v}_1(z,k))-g(\bar{v}_2(z,k))|^2\gamma(z,k)T_k(z)\ud z\\
&\lesssim
\mu(\R^2)\sum_{k=1}^m\int_{I_k}|\bar{v}_1(z,k)-\bar{v}_2(z,k))|^2\gamma(z,k)T_k(z)\ud z=
\mu(\R^2)\|\bar{v}_1-\bar{v}_2\|_{\bar{\mathbb{H}}_\gamma}^2,
\end{align*}
due to the Lipschitz continuity of $g$, which proves the Lipschitz continuity of $G:\bar{\mathbb{H}}_{\gamma}\to\mathscr{L}_2(\overline{RK},\bar{\mathbb{H}}_{\gamma})$. In virtue of the Lipschitz continuity of $b$, the map $B:\bar{\mathbb{H}}_{\gamma}\to\bar{\mathbb{H}}_{\gamma}$ is also Lipschitz continuous.
Thanks to \eqref{eq:IkTk}, for any $\bar{v}\in\bar{\mathbb{H}}_\gamma$,
\begin{equation}\label{eq:barGlin-growth}
\sum_{j=1}^{\infty}\|G(\bar{v})(\widehat{\mathfrak{u}_j\mu})^{\wedge}\|^2_{\bar{\mathbb{H}}_\gamma}
\lesssim 1+\|\bar{v}\|_{\bar{\mathbb{H}}_\gamma}^2.
\end{equation}
By \eqref{eq:St} and the above Lipschitz continuity of $B$ and $G$,
there exists a unique mild solution $\bar{u}$ to \eqref{eq:utzk} in $L^p(\Omega, \mathcal C([0, \mathsf{T}] ; \bar{\mathbb{H}}_\gamma))$ for any $p \geq 1$. Namely, $\bar{u}$ is a predictable process satisfying that for any $t\in[0,\mathsf{T}]$,
\begin{equation}\label{eq:ut}
\bar{u}(t)=\bar{S}(t) \psi^{\wedge}+\int_0^t \bar{S}(t-s) B(\bar{u}(s)) \ud s+\int_0^t \bar{S}(t-s) G(\bar{u}(s)) \ud \bar{\mathcal{W}}(s).
\end{equation}
Moreover,  for every $p\ge1$ (cf. \cite[formula (5.13)]{CF19}),
\begin{align*}
\mathbb{E}\bigg[\sup_{t\in[0,\mathsf{T}]}\|\bar{u}(t)\|^p_{\bar{\mathbb{H}}_\gamma}\bigg]\le C(p,\mathsf{T},c).
\end{align*}

\section{Exponential Euler approximation}\label{S3}
In the absence of the explicit expression of the particle density in stochastic incompressible flow, it is important to resort to numerical approximations to understand and simulate the stochastic RDA equation \eqref{eq:SPDE}. Considering the multiscale nature, an effective numerical approximation should not only be consistent with \eqref{eq:SPDE} but also enable the numerical investigation of the asymptotical behavior \eqref{eq:u-u} as $\epsilon$ tends to $0$.
We notice that the asymptotical behavior \eqref{eq:u-u} highly relies on the 
asymptotical behavior of the semigroup $S_\epsilon(t)$. To fully exploit the properties of the semigroup $S_\epsilon(t)$, we discretize \eqref{eq:SPDE} in time direction by the exponential Euler approximation.
 As a popular time-discrete approximation of SPDEs,
the exponential Euler approximation not only does not require a restriction on stepsize such as the CFL-type condition, but also has implementation advantages over the implicit scheme (see e.g., \cite{HO10,ACQ20}). 

In this section, we focus on the error estimate of the exponential Euler approximation \eqref{eq:EEM} for \eqref{eq:SPDE}, to clarify the computational accuracy of \eqref{eq:EEM} and to track how errors propagate across different scales $0<\epsilon\ll1$.
Note that the approximation error between $u_{\epsilon}(t_n)$ and $U_{\epsilon}^{n}$ satisfies that for any $1\le n\le N$,
\begin{align*}
u_{\epsilon}(t_n)-U_{\epsilon}^{n}&
=\sum_{j=0}^{n-1}\int_{t_{j}}^{t_{j+1}}\left(S_\epsilon(t_n-s)-S_\epsilon(t_n-t_j)\right)B(u_{\epsilon}(s))\ud s\\
&\quad+\sum_{j=0}^{n-1}\int_{t_{j}}^{t_{j+1}}S_\epsilon(t_n-t_j)\left(B(u_{\epsilon}(s))-B(u_{\epsilon}(t_{j}))\right)\ud s\\
&\quad+\sum_{j=0}^{n-1}\int_{t_{j}}^{t_{j+1}}S_\epsilon(t_n-t_j)\left(B(u_{\epsilon}(t_{j}))-B(U_{\epsilon}^{j})\right)\ud s\\
&\quad+\sum_{j=0}^{n-1}\int_{t_{j}}^{t_{j+1}}\left(S_\epsilon(t_n-s)-S_\epsilon(t_n-t_j)\right)G(u_{\epsilon}(s))\ud\mathcal{W}(s)\\
&\quad+\sum_{j=0}^{n-1}\int_{t_{j}}^{t_{j+1}}S_\epsilon(t_n-t_j)\left(G(u_{\epsilon}(s))-G(u_{\epsilon}(t_{j}))\right)\ud\mathcal{W}(s)\\
&\quad+\sum_{j=0}^{n-1}\int_{t_{j}}^{t_{j+1}}S_\epsilon(t_n-t_j)\left(G(u_{\epsilon}(t_{j}))-G(U_{\epsilon}^{j})\right)\ud\mathcal{W}(s).
\end{align*}
By \eqref{eq:Seps}, the Burkholder inequality, and the Lipschitz continuity of $B$ and $G$,
\begin{align}\label{eq:u-U}
&\mathbb{E}\left[\|u_{\epsilon}(t_n)-U_{\epsilon}^{n}\|^2_{\mathbb{H}_\gamma}\right]\\\notag
&\lesssim \sum_{j=0}^{n-1}\int_{t_{j}}^{t_{j+1}}\mathbb{E}\left[\left\|\left(S_\epsilon(t_n-s)-S_\epsilon(t_n-t_j)\right)B(u_{\epsilon}(s))\right\|^2_{\mathbb{H}_\gamma}\right]\ud s\\\notag
&\quad+ \sum_{j=0}^{n-1}\int_{t_{j}}^{t_{j+1}}\mathbb{E}\left[\left\|\left(S_\epsilon(t_n-s)-S_\epsilon(t_n-t_j)\right)G(u_{\epsilon}(s))\right\|^2_{\mathscr{L}_2(RK,\mathbb{H}_\gamma)}\right]\ud s\\\notag
&\quad+\sum_{j=0}^{n-1}\int_{t_{j}}^{t_{j+1}}\mathbb{E}\left[\|u_{\epsilon}(s)-u_{\epsilon}(t_{j})\|_{\mathbb{H}_\gamma}^2\right]\ud s+\sum_{j=0}^{n-1}\int_{t_{j}}^{t_{j+1}}\mathbb{E}\left[\|u_{\epsilon}(t_{j})-U_{\epsilon}^{j}\|_{\mathbb{H}_\gamma}^2\right]\ud s.
\end{align}

From the above analysis, the mean square convergence error of \eqref{eq:EEM}
depends on the temporal H\"older regularity estimates of the semigroup $\{S_\epsilon(t)\}_{t\in[0,\mathsf T]}$ and the solution $\{u_\epsilon(t)\}_{t\in[0,\mathsf T]}$.
These will be handled in Propositions \ref{prop:S-I-1} and \ref{prop:Holder}, respectively.

\subsection{Regularity of semigroup}
To derive the  H\"older regularity estimate of $\{S_\epsilon(t)\}_{t\in[0,\mathsf T]}$, we
first present a priori estimates of the corresponding linear equation $$\partial_t S_\epsilon(t)\varphi=\mathcal L_\epsilon S_\epsilon(t)\varphi,\quad t\in[0,\mathsf{T}]$$
 in the weighted space $\mathbb{H}_\gamma$
with the initial value $\varphi:\R^2\to \R$. It should be noticed that the following gradient estimate of $S_\epsilon(t)$ highly depends on the skew-symmetry of the symplectic matrix and the divergence-free property of $\nabla^{\perp} H(x)$.

\begin{lem}\label{prop:Lrho2}
Let Assumption \ref{Asp:gamma} hold. Then for any $\epsilon\in(0,1]$ and $t\in[0,\mathsf{T}]$.
 \begin{align}\label{eq:UL2rho}
&\|S_\epsilon(t)\varphi\|_{\mathbb{H}_\gamma}^2+\int_0^t\|\nabla S_\epsilon(t)\varphi\|_{\mathbb{H}_\gamma}^2\ud s\lesssim\|\varphi\|_{\mathbb{H}_\gamma}^2\quad\forall~\varphi\in \mathbb{H}_\gamma.
\end{align}
Moreover, for any
$\epsilon\in(0,1)$ and $t\in[0,\mathsf{T}]$,
\begin{align}\label{eq:D2Urho}
\|\nabla S_\epsilon(t)\varphi\|^2_{\mathbb{H}_\gamma}+\int_0^t\|\nabla^2S_\epsilon(t)\varphi\|^2_{\mathbb{H}_\gamma}\ud s
\lesssim\|\nabla\varphi\|^2_{\mathbb{H}_\gamma}+\epsilon^{-1}\|\varphi\|^2_{\mathbb{H}_\gamma},
\end{align}
provided that the right-hand side is finite, where $\nabla^2S_\epsilon(t)\varphi$ denotes the Hessian matrix of $S_\epsilon(t)\varphi$.

\end{lem}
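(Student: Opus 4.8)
The plan is to run a weighted energy (variational) argument directly on $u(t):=S_\epsilon(t)\varphi$, which solves $\partial_t u=\mathcal L_\epsilon u=\frac12\Delta u+\frac1\epsilon\nabla^\perp H\cdot\nabla u$, proving the two estimates in the stated order since the second will lean on the first. For \eqref{eq:UL2rho} I would compute $\frac{\ud}{\ud t}\|u\|_{\mathbb H_\gamma}^2=2\int_{\R^2}(\mathcal L_\epsilon u)\,u\,\gamma^{\vee}\,\ud x$ and treat the advection and diffusion parts separately. The advection part I rewrite as $\frac1\epsilon\int_{\R^2}(\nabla^\perp H\cdot\nabla u)\,u\,\gamma^{\vee}\,\ud x=\frac1{2\epsilon}\int_{\R^2}\nabla^\perp H\cdot\nabla(u^2)\,\gamma^{\vee}\,\ud x=-\frac1{2\epsilon}\int_{\R^2}u^2\,\nabla\cdot(\gamma^{\vee}\nabla^\perp H)\,\ud x$; here the two structural facts enter. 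The divergence-free property $\nabla\cdot\nabla^\perp H=0$ kills one piece, and since $\gamma^{\vee}=\vartheta\circ H$ gives $\nabla\gamma^{\vee}=\vartheta'(H)\nabla H$, the other piece is $\vartheta'(H)\,(\nabla H)^\top J\,\nabla H=0$ by skew-symmetry of the symplectic matrix $J$ (every vector is $J$-orthogonal to itself). Thus the entire advection term vanishes. The diffusion part, after two integrations by parts, equals $-\|\nabla u\|_{\mathbb H_\gamma}^2+\frac12\int_{\R^2}u^2\Delta\gamma^{\vee}\,\ud x$, and the preceding lemma $\Delta\gamma^{\vee}\le c\gamma^{\vee}$ bounds the last term by $\frac c2\|u\|_{\mathbb H_\gamma}^2$. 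Grönwall's inequality then yields $\|u(t)\|_{\mathbb H_\gamma}^2\le C(\mathsf T)\|\varphi\|_{\mathbb H_\gamma}^2$, and integrating the differential inequality in time controls $\int_0^t\|\nabla u\|_{\mathbb H_\gamma}^2\,\ud s\lesssim\|\varphi\|_{\mathbb H_\gamma}^2$.

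For \eqref{eq:D2Urho} I differentiate the equation and estimate $\frac{\ud}{\ud t}\|\nabla u\|_{\mathbb H_\gamma}^2=2\int_{\R^2}\nabla(\mathcal L_\epsilon u)\cdot\nabla u\,\gamma^{\vee}\,\ud x$. Writing $\partial_i(\nabla^\perp H\cdot\nabla u)=\nabla^\perp H\cdot\nabla(\partial_i u)+(\partial_i\nabla^\perp H)\cdot\nabla u$ splits the advection contribution into a transport-of-gradient part and a commutator part. The transport-of-gradient part is $\frac1{2\epsilon}\int_{\R^2}\nabla^\perp H\cdot\nabla(|\nabla u|^2)\,\gamma^{\vee}\,\ud x$, which vanishes by exactly the same cancellation as above. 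The commutator part is the quadratic form $\frac1\epsilon\int_{\R^2}(\nabla u)^\top\,\nabla(\nabla^\perp H)\,\nabla u\,\gamma^{\vee}\,\ud x$; since $\nabla(\nabla^\perp H)=J\nabla^2 H$ and $\nabla^2 H$ is bounded by Hypothesis \ref{asp:H}$(i)$, this term is bounded by $\frac C\epsilon\|\nabla u\|_{\mathbb H_\gamma}^2$. The diffusion contribution, handled component by component and again using $\Delta\gamma^{\vee}\le c\gamma^{\vee}$, supplies the dissipation $-\|\nabla^2 u\|_{\mathbb H_\gamma}^2$ plus a term bounded by $\frac c2\|\nabla u\|_{\mathbb H_\gamma}^2$. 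Altogether $\frac{\ud}{\ud t}\|\nabla u\|_{\mathbb H_\gamma}^2\le-\|\nabla^2 u\|_{\mathbb H_\gamma}^2+\frac C\epsilon\|\nabla u\|_{\mathbb H_\gamma}^2$.

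The main obstacle is precisely this last term $\frac C\epsilon\|\nabla u\|_{\mathbb H_\gamma}^2$: a naive Grönwall argument would produce a factor $e^{Ct/\epsilon}$ and destroy the linear-in-$\epsilon^{-1}$ dependence. The resolution, and the crux of the proof, is to \emph{not} treat it as a Grönwall coefficient but to integrate the differential inequality over $[0,t]$ and invoke the already-established time-integrated bound $\int_0^t\|\nabla u\|_{\mathbb H_\gamma}^2\,\ud s\lesssim\|\varphi\|_{\mathbb H_\gamma}^2$ from \eqref{eq:UL2rho}. This gives $\|\nabla u(t)\|_{\mathbb H_\gamma}^2+\int_0^t\|\nabla^2 u\|_{\mathbb H_\gamma}^2\,\ud s\le\|\nabla\varphi\|_{\mathbb H_\gamma}^2+\frac C\epsilon\int_0^t\|\nabla u\|_{\mathbb H_\gamma}^2\,\ud s\lesssim\|\nabla\varphi\|_{\mathbb H_\gamma}^2+\epsilon^{-1}\|\varphi\|_{\mathbb H_\gamma}^2$, which is \eqref{eq:D2Urho}. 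The factor $\epsilon^{-1}$ thus enters only linearly and multiplies the lower-order norm $\|\varphi\|_{\mathbb H_\gamma}^2$, exactly as claimed; the cancellation is what removes the dangerous $\epsilon^{-1}$ from the top-order transport term and confines it to the bounded commutator.

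Finally, a word on rigor: the manipulations above are formal in that they presuppose enough decay of $u$ and its derivatives at infinity to discard boundary terms in the integrations by parts. I would justify them by first performing the computation for smooth data with adequate decay (or through a Galerkin/spectral approximation) and then passing to the limit, using the quadratic growth of $H$ in Hypothesis \ref{asp:H}$(iii)$ together with the integrability and decay of $\gamma^{\vee}=\vartheta\circ H$ (cf.\ \eqref{eq:L1}) to control the weighted tails. The caveat ``provided that the right-hand side is finite'' restricts attention to $\varphi$ with $\nabla\varphi\in\mathbb H_\gamma$, for which this approximation argument closes.
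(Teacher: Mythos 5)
Your proposal is correct and follows essentially the same weighted energy argument as the paper: the same cancellation of the advection term via $\operatorname{div}(\nabla^{\perp}H)=0$ and $\langle\nabla^{\perp}H,\nabla\gamma^{\vee}\rangle=0$, the same use of $\Delta\gamma^{\vee}\le c\gamma^{\vee}$ for the diffusion part, and the same key device of controlling the $\epsilon^{-1}\|\nabla u\|^2_{\mathbb{H}_\gamma}$ term by the time-integrated gradient bound from \eqref{eq:UL2rho} rather than absorbing it into a Gronwall exponent. The only cosmetic difference is that the paper first writes pointwise identities for $|U_\varphi|^2$ and $|\nabla U_\varphi|^2$ and applies Gronwall with the $e^{ct/2}$ factor, whereas you integrate the differential inequality directly; these are interchangeable.
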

\begin{proof}
By the relations $\nabla{|\phi|^2}=2\phi \nabla \phi$ and
$2\phi\Delta \phi=\Delta(\phi^2)-2|\nabla \phi|^2$,
we obtain that $U_\varphi(t):=S_\epsilon(t)\varphi$ satisfies
\begin{align}\label{eq:Uphi}
\partial_t|U_\varphi(t)|^2&=\frac12\Delta|U_\varphi(t)|^2-|\nabla U_\varphi(t)|^2+\frac{1}{\epsilon}\langle \nabla^\perp H,\nabla |U_\varphi(t)|^2\rangle,
\end{align}
and
\begin{align}\label{eq:Uphi1}
\partial_t|\nabla U_\varphi(t)|^2
&=\frac{1}{2}\Delta|\nabla U_\varphi(t)|^2-|\nabla^2U_\varphi(t)|^2+\frac{2}{\epsilon}\sum_j\partial_jU_\varphi(t)\langle \partial_j\nabla^\perp H,\nabla U_\varphi(t)\rangle\\\notag
&\quad +\frac{1}{\epsilon}\langle \nabla^\perp H,\nabla| \nabla U_\varphi(t)|^2\rangle.
\end{align}
 
Thanks to Assumption \ref{Asp:gamma}, $\gamma^{\vee}=\vartheta\circ H$, which ensures that
$\langle \nabla^\perp H,\nabla\gamma^{\vee}\rangle=\langle \nabla^\perp H,\vartheta^\prime(H)\nabla H\rangle=0$ and $\textrm{div}(\nabla^\perp H)=0$. Hence
\begin{align}\label{eq:perpH0}
\int_{\R^2}\langle \nabla^\perp H,\nabla \chi\rangle\gamma^{\vee}\ud x=-\int_{\R^2}\textrm{div}( \nabla^\perp H) \chi\gamma^{\vee}\ud x-\int_{\R^2}\langle \nabla^\perp H,\nabla\gamma^{\vee}\rangle\chi\ud x=0
\end{align}
for any suitable regular $\chi:\R^2\to \R$. Integrating with respect to the measure $\gamma^{\vee}(x)\ud x$ on both sides of \eqref{eq:Uphi} and then using 
 \eqref{eq:perpH0} with $\chi=|U_\varphi(t)|^2$, 
\begin{align}\label{eq:Uphirho}
&\quad\ \frac{\ud}{\ud t}\int_{\R^2}|U_\varphi(t)|^2\gamma^{\vee}\ud x+\int_{\R^2}|\nabla U_\varphi(t)|^2\gamma^{\vee}\ud x\\\notag
&=\frac12\int_{\R^2}\Delta|U_\varphi(t)|^2\gamma^{\vee}\ud x=\frac12\int_{\R^2}|U_\varphi(t)|^2\Delta\gamma^{\vee}\ud x\le \frac12c\int_{\R^2}|U_\varphi(t)|^2\gamma^{\vee}\ud x
\end{align}
due to  $\Delta \gamma^{\vee}\le c\gamma^{\vee}$.
Then the Gronwall inequality leads to \eqref{eq:UL2rho}.

Similarly, according to \eqref{eq:Uphi1} and \eqref{eq:perpH0} with $\chi=|\nabla U_\varphi(t)|^2$,
\begin{align}\label{eq:Uh1rho}
&\quad\ \frac{\ud}{\ud t}\int_{\R^2}|\nabla U_\varphi(t)|^2\gamma^{\vee}\ud x\\\notag
&\le \frac12\int_{\R^2}|\nabla U_\varphi(t)|^2\Delta\gamma^{\vee}\ud x-\int_{\R^2}|\nabla^2U_\varphi(t)|^2\gamma^{\vee}\ud x+C\epsilon^{-1}\int_{\R^2}|\nabla U_\varphi(t)|^2\gamma^{\vee}\ud x\\\notag
&\le \frac12c\|\nabla U_\varphi(t)\|^2_{\mathbb{H}_\gamma}-\|\nabla^2U_\varphi(t)\|^2_{\mathbb{H}_\gamma}+C\epsilon^{-1}\|\nabla U_\varphi(t)\|^2_{\mathbb{H}_\gamma},
\end{align}
since the second-order derivative of $H$ is bounded.
 Further, applying the Gronwall inequality to \eqref{eq:Uh1rho} and then using \eqref{eq:UL2rho} give that for any $t\in[0,\mathsf T]$,
\begin{align*}
\|\nabla U_\varphi(t)\|^2_{\mathbb{H}_\gamma}
&\le e^{\frac12 ct}\|\nabla\varphi\|^2_{\mathbb{H}_\gamma}+\int_0^te^{\frac12 c(t-s)}C\epsilon^{-1}\|\nabla U_\varphi(s)\|^2_{\mathbb{H}_\gamma}\ud s\\\notag
&\le e^{\frac12 ct}\|\nabla\varphi\|^2_{\mathbb{H}_\gamma}+e^{\frac12 c\mathsf{T}}C\epsilon^{-1}\int_0^t\|\nabla U_\varphi(s)\|^2_{\mathbb{H}_\gamma}\ud s\\\notag
&\lesssim\|\nabla\varphi\|^2_{\mathbb{H}_\gamma}+\epsilon^{-1}\|\varphi\|^2_{\mathbb{H}_\gamma}.
\end{align*}
Integrating \eqref{eq:Uh1rho} with respect to time completes the proof of \eqref{eq:D2Urho}.
\end{proof}

In addition to $\gamma$, we also consider another graph weighted function $\sqrt{\gamma}:\Gamma\to (0,+\infty)$ given by 
$\sqrt{\gamma}(z,k)=\sqrt{\vartheta(z)}$ for every $(z,k)\in\Gamma$. The following assumption will be used to analyze the convergence rate of the proposed approximations in this section and section \ref{S5}.

\begin{asp}\label{Asp:gamma-S}
Let $\gamma,\vartheta$ satisfy Assumption \ref{Asp:gamma} and $z^2\vartheta(z)$ be uniformly bounded for $z\in[0,\infty)$. Moreover, Assumption \ref{Asp:gamma} and \eqref{eq:IkTk} hold with $\gamma$ and $\vartheta$ replaced by $\sqrt{\gamma}$ and $\sqrt{\vartheta}$, respectively.

\end{asp}
\begin{rem}\label{rem:A3}
\begin{enumerate}
\item[(i)]
Notice that in  Example \ref{Ex:gamma-S1} with $\lambda>2$ or Example \ref{Ex:gamma-S},
  $\sqrt{\vartheta}$ shares similar properties as $\vartheta$, namely, $\sqrt{\vartheta}:[0,\infty)\to(0,\infty)$ is second order differentiable and $\sqrt{\vartheta(z)}=\sqrt{c_0}e^{-\frac12\lambda(\sqrt{z}-\sqrt{2z_0})}$ or $\sqrt{\vartheta(z)}=\sqrt{c_0}z^{-\frac\lambda2}$ for $z\ge z_0$. Hence the  functions $\gamma$ in Example \ref{Ex:gamma-S1} with $\lambda>2$ and Example \ref{Ex:gamma-S} satisfy Assumption \ref{Asp:gamma-S}. 
  
 \item[(ii)] We note that under Assumption \ref{Asp:gamma-S}, the estimates under the norm $\|\cdot\|_{\mathbb{H}_\gamma}$ with $\gamma$ satisfying Assumption \ref{Asp:gamma}, such as \eqref{eq:UL2rho} and \eqref{eq:D2Urho}, also hold with $\gamma$ replaced by $\sqrt\gamma$.
 \end{enumerate}
 \end{rem}

With slight abuse of notation, we use the notation $\mathbb{H}_{\sqrt\gamma}$ (resp.\ $\bar{\mathbb{H}}_{\sqrt\gamma}$) to denote the space $\mathbb{H}_{\gamma}$ (resp.\ $\bar{\mathbb{H}}_{\gamma}$) with $\gamma$ replaced by $\sqrt\gamma$. Under Assumption \ref{Asp:gamma-S}, the function $\vartheta$ is uniformly bounded on $[0,\infty)$, which implies $\mathbb{H}_{\sqrt{\gamma}}\subset \mathbb{H}_\gamma$. Indeed, 
 $$\|\varphi\|^2_{\mathbb{H}_\gamma}=\int_{\R^2}|\varphi(x)|^2\vartheta(H(x))\ud x\lesssim\int_{\R^2}|\varphi(x)|^2\sqrt{\vartheta(H(x))}\ud x\lesssim \|\varphi\|^2_{\mathbb{H}_{\sqrt{\gamma}}}\quad\forall~\varphi\in \mathbb{H}_{\sqrt{\gamma}}.$$

We are ready to give the temporal H\"older continuity of the semigroup $\{S_\epsilon(t)\}_{t\in[0,\mathsf{T}]}$.

\begin{prop}\label{prop:S-I-1}
Under Assumption \ref{Asp:gamma-S},
for any $\epsilon\in(0,1]$ and $t\in(0,\mathsf{T}]$,
\begin{align*}
 \|(S_\epsilon(t)-I)\varphi\|_{\mathbb{H}_\gamma}
 \lesssim t^{\frac12}\left(\|\nabla\varphi\|_{\mathbb{H}_\gamma}+\epsilon^{-\frac12}\|\varphi\|_{\mathbb{H}_\gamma}+\epsilon^{-1} \|\varphi\|_{\mathbb{H}_{\sqrt\gamma}}\right),
 \end{align*}
 provided that the right-hand side is finite.
\end{prop}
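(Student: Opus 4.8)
The plan is to start from the fundamental theorem of calculus for the semigroup. Writing $U_\varphi(s):=S_\epsilon(s)\varphi$ and using $\partial_s U_\varphi(s)=\mathcal{L}_\epsilon U_\varphi(s)$, I would represent
\[
(S_\epsilon(t)-I)\varphi=\int_0^t \mathcal{L}_\epsilon U_\varphi(s)\,\ud s=\frac12\int_0^t\Delta U_\varphi(s)\,\ud s+\frac1\epsilon\int_0^t\langle\nabla^\perp H,\nabla U_\varphi(s)\rangle\,\ud s,
\]
an identity that I would justify first for smooth $\varphi$ in the domain of $\mathcal{L}_\epsilon$ and then extend to every $\varphi$ with finite right-hand side by density, which is precisely where the proviso on finiteness enters. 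The two integrals are estimated separately; in each case I pull the $\mathbb{H}_\gamma$-norm inside the time integral (Minkowski) and apply the Cauchy--Schwarz inequality in $s$, which is what generates the factor $t^{1/2}$.

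For the diffusive part, the pointwise bound $|\Delta U_\varphi(s)|\le\sqrt2\,|\nabla^2U_\varphi(s)|$ gives $\|\Delta U_\varphi(s)\|_{\mathbb{H}_\gamma}\lesssim\|\nabla^2U_\varphi(s)\|_{\mathbb{H}_\gamma}$, so that
\[
\Big\|\tfrac12\int_0^t\Delta U_\varphi(s)\,\ud s\Big\|_{\mathbb{H}_\gamma}\le t^{1/2}\Big(\int_0^t\|\nabla^2U_\varphi(s)\|_{\mathbb{H}_\gamma}^2\,\ud s\Big)^{1/2}.
\]
The Hessian estimate \eqref{eq:D2Urho} of Lemma \ref{prop:Lrho2} bounds the integral by $\|\nabla\varphi\|_{\mathbb{H}_\gamma}^2+\epsilon^{-1}\|\varphi\|_{\mathbb{H}_\gamma}^2$, and using $(a^2+b^2)^{1/2}\le a+b$ reproduces exactly the first two terms $\|\nabla\varphi\|_{\mathbb{H}_\gamma}+\epsilon^{-1/2}\|\varphi\|_{\mathbb{H}_\gamma}$.

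The advection term is the crux, since the drift $\nabla^\perp H$ is unbounded and the uniform bound \eqref{eq:Seps} gives nothing here. The key is a weight trade-off from $\gamma$ to $\sqrt\gamma$. By Hypothesis \ref{asp:H} the Hessian of $H$ is bounded, so $\nabla H$ is globally Lipschitz and $|\nabla^\perp H(x)|=|\nabla H(x)|\lesssim 1+|x|$, whence $|\langle\nabla^\perp H,\nabla U_\varphi(s)\rangle|^2\lesssim(1+|x|^2)|\nabla U_\varphi(s)|^2$. Under Assumption \ref{Asp:gamma-S} the map $z\mapsto z^2\vartheta(z)$ is bounded; combined with the quadratic growth $H(x)\ge a_1|x|^2$ for $H(x)\ge z_0$ and the continuity of $\vartheta\circ H$ on the bounded set $\{H\le z_0\}$, this yields $(1+|x|^2)^2\gamma^\vee(x)\lesssim1$, equivalently $(1+|x|^2)\gamma^\vee(x)\lesssim\sqrt{\gamma^\vee(x)}$. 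Hence $\|\langle\nabla^\perp H,\nabla U_\varphi(s)\rangle\|_{\mathbb{H}_\gamma}\lesssim\|\nabla U_\varphi(s)\|_{\mathbb{H}_{\sqrt\gamma}}$, and Minkowski together with Cauchy--Schwarz in time gives
\[
\Big\|\tfrac1\epsilon\int_0^t\langle\nabla^\perp H,\nabla U_\varphi(s)\rangle\,\ud s\Big\|_{\mathbb{H}_\gamma}\lesssim\frac{t^{1/2}}{\epsilon}\Big(\int_0^t\|\nabla U_\varphi(s)\|_{\mathbb{H}_{\sqrt\gamma}}^2\,\ud s\Big)^{1/2}.
\]
Invoking the energy estimate \eqref{eq:UL2rho} with $\gamma$ replaced by $\sqrt\gamma$, valid by Remark \ref{rem:A3}(ii), bounds the remaining integral by $\|\varphi\|_{\mathbb{H}_{\sqrt\gamma}}^2$, producing the third term $\epsilon^{-1}\|\varphi\|_{\mathbb{H}_{\sqrt\gamma}}$.

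Summing the diffusive and advective bounds yields the claim. I expect the advection estimate to be the main obstacle: one must convert the linear growth of $\nabla^\perp H$ into the passage from $\mathbb{H}_\gamma$ to the stronger space $\mathbb{H}_{\sqrt\gamma}$, which is exactly what the decay condition $z^2\vartheta(z)\le C$ in Assumption \ref{Asp:gamma-S} is tailored to permit, and then to control $\int_0^t\|\nabla U_\varphi(s)\|_{\mathbb{H}_{\sqrt\gamma}}^2\,\ud s$ by the $\sqrt\gamma$-version of the a priori gradient estimate. The remaining ingredients—the density argument for the integral representation and the verification of finiteness—are routine.
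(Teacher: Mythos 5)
Your proposal is correct and follows essentially the same route as the paper's proof: the integral representation $(S_\epsilon(t)-I)\varphi=\int_0^t\mathcal{L}_\epsilon U_\varphi(s)\,\ud s$ with Minkowski and Cauchy--Schwarz in time, the Hessian bound \eqref{eq:D2Urho} for the diffusive part, and the weight trade-off $|\nabla^\perp H(x)|^2\gamma^\vee(x)\lesssim\sqrt{\gamma^\vee(x)}$ (a consequence of $z^2\vartheta(z)$ being bounded) combined with \eqref{eq:UL2rho} for $\sqrt\gamma$ to handle the advection term. Your phrasing of the weight estimate via $(1+|x|^2)^2\gamma^\vee(x)\lesssim 1$ is just a cosmetic variant of the paper's bound on $|\nabla H(x)|^2\sqrt{\vartheta(H(x))}$, so there is no substantive difference.
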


\begin{proof}
Under Assumption \ref{Asp:gamma-S},
 $z\sqrt{\vartheta(z)}$ is uniformly bounded for $z\in[0,\infty)$, and thus
 the map $x\mapsto |\nabla H(x)|^2\sqrt{\vartheta(H(x))}$ is bounded in $\R^2$. Hence one has
 \begin{equation*}
|\nabla^{\perp} H(x)|^2\gamma^{\vee}(x)
=\sqrt\gamma^{\vee}(x)|\nabla H(x)|^2\sqrt{\vartheta(H(x))}
\lesssim\sqrt\gamma^{\vee}(x),
\end{equation*}
from which it follows that $U_\varphi(t)=S_\epsilon(t)\varphi$ satisfies
\begin{align}\label{eq:nablaUL2}
\|\langle \nabla^\perp H,\nabla U_\varphi(s) \rangle \|^2_{\mathbb{H}_\gamma}&=\int_{\R^2}|\langle \nabla^\perp H,\nabla U_\varphi(s) \rangle|^2\gamma^{\vee}(x)\ud x\\\notag
&\lesssim \int_{\R^2}\sqrt\gamma^{\vee}(x)|\nabla U_\varphi(s)|^2\ud x\lesssim\|\nabla U_\varphi(s)\|^2_{\mathbb{H}_{\sqrt\gamma}}.
 \end{align}
 In view of \eqref{eq:UL2rho} with $\gamma$ replaced by $\sqrt\gamma$, \eqref{eq:D2Urho}, and \eqref{eq:nablaUL2}, for any $t\in(0,\mathsf{T}]$,
\begin{align*}
 \|(S_\epsilon(t)-I)\varphi\|_{\mathbb{H}_\gamma}&\le \int_0^t\|\frac{1}{2}\Delta U_\varphi(s)+\frac{1}{\epsilon}\langle \nabla^\perp H,\nabla U_\varphi(s) \rangle \|_{\mathbb{H}_\gamma}\ud s\\
 &\lesssim t^{\frac12}\left(\int_0^t\|\Delta U_\varphi(s)\|^2_{\mathbb{H}_\gamma}\ud s\right)^{\frac12}+\frac{1}{\epsilon}t^{\frac12} \left(\int_0^t\|\nabla U_\varphi(s)\|^2_{\mathbb{H}_{\sqrt\gamma}}\ud s\right)^{\frac12}\\
 &\lesssim t^{\frac12}\left(\|\nabla\varphi\|^2_{\mathbb{H}_\gamma}+\epsilon^{-1}\|\varphi\|^2_{\mathbb{H}_\gamma}\right)^{\frac12}+\epsilon^{-1}t^{\frac12} \|\varphi\|_{\mathbb{H}_{\sqrt\gamma}},
 \end{align*}
 which completes the proof.  
 \end{proof}

\subsection{Approximation error for stochastic RDA equation}
In this part, we present the convergence analysis of the exponential Euler approximation \eqref{eq:EEM} for the stochastic RDA equation \eqref{eq:SPDE}. 
For preparation, we begin with the gradient estimate of \eqref{eq:SPDE}, which shows that the second moment of the gradient of the solution depends on $\epsilon^{-1}$ linearly.

\begin{lem}\label{lem:nablau}
Let $b,g$ be continuously differentiable with bounded derivatives and Assumption \ref{Asp:gamma} hold. Suppose that
 $\psi,\nabla\psi\in \mathbb{H}_\gamma $ and 
 \begin{equation}\label{eq:noise}
\int_{\R^2}|\xi|^2\mu(\ud \xi)< \infty.
\end{equation}
 Then for any $\epsilon\in(0,1]$ and  $t\in[0,\mathsf T]$, 
 \begin{align}\label{eq:nablau}
 \E\left[\|\nabla u_\epsilon(t)\|^2_{\mathbb{H}_\gamma}\right]+\int_0^t\E\left[\|\nabla^2u_\epsilon(r)\|^2_{\mathbb{H}_\gamma}\right]\ud r
\lesssim 1+\epsilon^{-1}.
\end{align}
 \end{lem}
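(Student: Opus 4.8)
The plan is to run a variational (energy) argument directly on the stochastic RDA equation, applying It\^o's formula to $\|\nabla u_\epsilon(t)\|^2_{\mathbb{H}_\gamma}$ in the spirit of the proof of Lemma \ref{prop:Lrho2}. Differentiating the equation once in space, $\partial_i u_\epsilon$ is driven by $\mathcal{L}_\epsilon\partial_i u_\epsilon$ together with the commutator term $\frac{1}{\epsilon}(\partial_i\nabla^{\perp}H)\cdot\nabla u_\epsilon$, the reaction term $b'(u_\epsilon)\partial_i u_\epsilon$, and the differentiated noise. The two structural facts that keep the $1/\epsilon$ dependence linear are exactly those used for the semigroup: first, since $\gamma^{\vee}=\vartheta\circ H$ and $\mathrm{div}(\nabla^{\perp}H)=0$, the transport contribution $\frac{1}{\epsilon}\langle\nabla^{\perp}H,\nabla|\nabla u_\epsilon|^2\rangle$ integrates to zero against $\gamma^{\vee}\,\ud x$ by \eqref{eq:perpH0}; second, the bounded Hessian of $H$ (Hypothesis \ref{asp:H}) controls the commutator term by $C\epsilon^{-1}\|\nabla u_\epsilon\|^2_{\mathbb{H}_\gamma}$.

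First I would record the uniform-in-$\epsilon$ dissipation estimate obtained by applying It\^o's formula to $\|u_\epsilon(t)\|^2_{\mathbb{H}_\gamma}$: the advection term again vanishes by \eqref{eq:perpH0}, the diffusion produces $\frac{c}{2}\|u_\epsilon\|^2_{\mathbb{H}_\gamma}-\|\nabla u_\epsilon\|^2_{\mathbb{H}_\gamma}$ via $\Delta\gamma^{\vee}\le c\gamma^{\vee}$, and the reaction plus noise terms are bounded by $C(1+\|u_\epsilon\|^2_{\mathbb{H}_\gamma})$. Combined with the moment bound \eqref{eq:uHp}, integrating in time gives
\[
\int_0^t\E\big[\|\nabla u_\epsilon(s)\|^2_{\mathbb{H}_\gamma}\big]\,\ud s\lesssim 1\qquad\text{uniformly in }\epsilon\in(0,1].
\]
This is the analogue of \eqref{eq:UL2rho} and is the crucial ingredient for avoiding an exponential dependence on $1/\epsilon$.

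Next I would carry out the It\^o expansion of $\|\nabla u_\epsilon(t)\|^2_{\mathbb{H}_\gamma}=\sum_i\int_{\R^2}|\partial_i u_\epsilon|^2\gamma^{\vee}\,\ud x$ term by term. Summing over $i$, the diffusion yields $\frac{c}{2}\|\nabla u_\epsilon\|^2_{\mathbb{H}_\gamma}-\|\nabla^2 u_\epsilon\|^2_{\mathbb{H}_\gamma}$, the transport term drops out, the commutator contributes $C\epsilon^{-1}\|\nabla u_\epsilon\|^2_{\mathbb{H}_\gamma}$, and the reaction contributes $C\|\nabla u_\epsilon\|^2_{\mathbb{H}_\gamma}$. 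For the It\^o correction from the noise, the Karhunen--Lo\`eve expansion reduces matters to bounding $\sum_{i,j}\int_{\R^2}|\partial_i(g(u_\epsilon)\widehat{\mathfrak{u}_j\mu})|^2\gamma^{\vee}\,\ud x$: the part falling on $g'(u_\epsilon)\partial_i u_\epsilon$ is $\lesssim\|\nabla u_\epsilon\|^2_{\mathbb{H}_\gamma}$ by \eqref{eq:ujmu0}, while the part falling on $\partial_i\widehat{\mathfrak{u}_j\mu}$ requires the new Parseval identity $\sum_{i,j}|\partial_i\widehat{\mathfrak{u}_j\mu}(x)|^2=\int_{\R^2}|\xi|^2\mu(\ud\xi)$, finite precisely by \eqref{eq:noise}, and is $\lesssim 1+\|u_\epsilon\|^2_{\mathbb{H}_\gamma}$. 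Taking expectations (the martingale part has zero mean) produces
\[
\frac{\ud}{\ud t}\E\big[\|\nabla u_\epsilon\|^2_{\mathbb{H}_\gamma}\big]+\E\big[\|\nabla^2 u_\epsilon\|^2_{\mathbb{H}_\gamma}\big]\le C\big(1+\epsilon^{-1}\big)\E\big[\|\nabla u_\epsilon\|^2_{\mathbb{H}_\gamma}\big]+C.
\]

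Finally, rather than applying Gronwall to the whole right-hand side (which would force an $e^{C/\epsilon}$ factor), I would keep only the $\epsilon$-independent coefficient in the Gronwall step and treat $C\epsilon^{-1}\|\nabla u_\epsilon\|^2_{\mathbb{H}_\gamma}$ as an inhomogeneity: bounding the exponential kernel by $e^{C\mathsf{T}}$ and invoking the uniform dissipation bound $\int_0^t\E\|\nabla u_\epsilon\|^2_{\mathbb{H}_\gamma}\,\ud s\lesssim 1$ gives $\E\|\nabla u_\epsilon(t)\|^2_{\mathbb{H}_\gamma}\lesssim\|\nabla\psi\|^2_{\mathbb{H}_\gamma}+\epsilon^{-1}+1\lesssim 1+\epsilon^{-1}$; integrating the differential inequality once more and reusing the same dissipation bound then controls $\int_0^t\E\|\nabla^2 u_\epsilon\|^2_{\mathbb{H}_\gamma}\,\ud s$ by $1+\epsilon^{-1}$ as well. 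I expect the main difficulty to be twofold: rigorously justifying the It\^o formula at the level of $\nabla u_\epsilon$ in the absence of a smoothing (analytic) semigroup, which I would handle by a Galerkin or regularization procedure together with the a priori bounds above; and, more essentially, organizing the Gronwall step so that the $1/\epsilon$ dependence remains \emph{linear}, which hinges entirely on the uniform-in-$\epsilon$ time-integrated gradient bound playing the role that \eqref{eq:UL2rho} plays in the proof of Lemma \ref{prop:Lrho2}.
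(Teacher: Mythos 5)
Your proposal is correct and follows essentially the same route as the paper's proof: first derive the uniform-in-$\epsilon$ bound $\int_0^t\E[\|\nabla u_\epsilon\|^2_{\mathbb{H}_\gamma}]\,\ud s\lesssim 1$ via It\^o's formula on $\|u_\epsilon\|^2_{\mathbb{H}_\gamma}$ (the paper's \eqref{eq:D1uHp}), then apply It\^o's formula to $\|\nabla u_\epsilon\|^2_{\mathbb{H}_\gamma}$ with the same ingredients — cancellation of the transport term via \eqref{eq:perpH0}, the bounded Hessian of $H$ for the $\epsilon^{-1}$ commutator, \eqref{eq:ujmu0} plus the Parseval identity under \eqref{eq:noise} for the It\^o correction — and finally absorb the $C\epsilon^{-1}\int\E\|\nabla u_\epsilon\|^2$ term using the uniform time-integrated bound so the $1/\epsilon$ dependence stays linear. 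This last step is exactly how the paper concludes (it simply collects the estimates and invokes \eqref{eq:uHp} and \eqref{eq:D1uHp} rather than running Gronwall with an $\epsilon$-dependent coefficient).
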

\begin{proof}
Applying the It\^o formula (see e.g., \cite[Theorem 4.32]{DP14}) to $\|u_\epsilon(t)\|_{\mathbb{H}_\gamma}^2$ and using a similar argument of \eqref{eq:Uphirho}, we obtain
\begin{align*}
\| u_\epsilon(t)\|^2_{\mathbb{H}_\gamma}&=\| \psi\|^2_{\mathbb{H}_\gamma}+2\int_0^t\Big\langle u_\epsilon(r), \frac12\Delta u_{\epsilon}(r)+\frac{1}{\epsilon}\langle \nabla^{\perp} H(x),\nabla u_{\epsilon}(r)\rangle\Big\rangle_{\mathbb{H}_\gamma}\ud r\\\notag
&\quad+2\int_0^t\left\langle u_\epsilon(r), B(u_\epsilon(r))\right\rangle_{\mathbb{H}_\gamma}\ud r+2\int_0^t\Big\langle u_\epsilon(r), G(u_\epsilon(r))\ud \mathcal W(r)\Big\rangle_{\mathbb{H}_\gamma}\\\notag
&\quad+\int_0^t\|G(u_\epsilon(r))\|_{\mathscr{L}_2(RK,\mathbb{H}_\gamma)}^2\ud r\\
&\le\| \psi\|^2_{\mathbb{H}_\gamma}+\frac12c\int_0^t\|u_\epsilon(r)\|^2_{\mathbb{H}_\gamma}\ud r-\int_0^t\|\nabla u_\epsilon(r)\|^2_{\mathbb{H}_\gamma}\ud r\\\notag
&\quad+C\int_0^t1+ \|u_\epsilon(r)\|^2_{\mathbb{H}_\gamma}\ud r+2\int_0^t\Big\langle u_\epsilon(r), G(u_\epsilon(r))\ud \mathcal W(r)\Big\rangle_{\mathbb{H}_\gamma},
\end{align*}
thanks to the linear growth of $b,g$ and \eqref{eq:Gv}.
Taking expectations on both sides of the above 
inequality and using \eqref{eq:uHp}, it follows that for any $t\in[0,\mathsf{T}]$,
\begin{equation}\label{eq:D1uHp}
\int_0^t\E\left[\|\nabla u_\epsilon(r)\|^2_{\mathbb{H}_\gamma}\right]\ud r\le C(\mathsf T).
\end{equation}
Then applying the It\^o formula to $\|\nabla u_\epsilon(t)\|^2_{\mathbb{H}_\gamma}$ yields
\begin{align*}
\|\nabla u_\epsilon(t)\|^2_{\mathbb{H}_\gamma}-\|\nabla \psi\|^2_{\mathbb{H}_\gamma}&=2\int_0^t\Big\langle\nabla u_\epsilon(r), \nabla\big(\frac12\Delta u_{\epsilon}(r)+\frac{1}{\epsilon}\langle \nabla^{\perp} H(x),\nabla u_{\epsilon}(r)\rangle\big)\Big\rangle_{\mathbb{H}_\gamma}\ud r\\\notag
&\quad+2\int_0^t\left\langle\nabla u_\epsilon(r), \nabla B(u_\epsilon(r))\right\rangle_{\mathbb{H}_\gamma}\ud r\\\notag
&\quad+2\int_0^t\Big\langle\nabla u_\epsilon(r), \nabla\big(G(u_\epsilon(r))\ud \mathcal W(r)\big)\Big\rangle_{\mathbb{H}_\gamma}\ud r\\\notag
&\quad+\int_0^t\sum_{j=1}^\infty\|\nabla(G(u_\epsilon(r))\widehat{\mathfrak{u}_j\mu})\|_{\mathbb{H}_\gamma}^2\ud r=:I+II+III+IV.
\end{align*}
Repeating the derivation of \eqref{eq:Uh1rho}, it can be shown that
\begin{align*}
I\le \frac12c\int_0^t\|\nabla u_\epsilon(r)\|^2_{\mathbb{H}_\gamma}\ud r-\int_0^t\|\nabla^2u_\epsilon(r)\|^2_{\mathbb{H}_\gamma}\ud r+C\epsilon^{-1}\int_0^t\|\nabla u_\epsilon(r)\|^2_{\mathbb{H}_\gamma}\ud r.
\end{align*}
By the boundedness of $b^\prime$, one has
$$II=2\int_0^t\left\langle\nabla u_\epsilon(r), b^\prime(u_\epsilon(r))\nabla u_\epsilon(r)\right\rangle_{\mathbb{H}_\gamma}\ud r\lesssim\int_0^t\left\|\nabla u_\epsilon(r)\right\|_{\mathbb{H}_\gamma}^2\ud r.$$

Replacing $g(v_1(x))$ in \eqref{eq:Gv} by $\nabla g(u_{\epsilon}(r))$ yields
\begin{align*}
\sum_{j=1}^\infty\int_{\R^2}| \nabla g(u_{\epsilon}(r))\widehat{\mathfrak{u}_j\mu}|^2\gamma^{\vee}\ud x\lesssim \| \nabla g(u_{\epsilon}(r))\|^2_{\mathbb{H}_\gamma}\lesssim \| \nabla u_{\epsilon}(r)\|^2_{\mathbb{H}_\gamma}.
\end{align*}
Since $\{\mathfrak{u}_j\}_{j\in\mathbb{N}_+}$ is a complete orthonormal basis of the Hilbert space $L^2_{(s)}(\R^2,\ud \mu)$,
\begin{align*}
\sum_{j=1}^\infty\int_{\R^2}|g(u_{\epsilon}(r)) \nabla\widehat{\mathfrak{u}_j\mu}|^2\gamma^{\vee}\ud x
&=\sum_{j=1}^\infty\int_{\R^2}\Big|g(u_{\epsilon}(r)) \int_{\R^2}\textup{i}\xi e^{\textup{i}x\cdot\xi} \mathfrak{u}_j(\xi)\mu(\ud\xi)\Big|^2\gamma^{\vee}(x)\ud x\\
&=\int_{\R^2}\int_{\R^2}|g(u_{\epsilon}(r,x)) \textup{i}\xi e^{-\textup{i}x\cdot\xi} |^2\mu(\ud \xi)\gamma^{\vee}(x)\ud x\\
&\lesssim \int_{\R^2}|g(u_{\epsilon}(r,x)) |^2\int_{\R^2}|\xi|^2\mu(\ud \xi)\gamma^{\vee}(x)\ud x\\
&\lesssim 1+\|u_{\epsilon}(r)\|_{\mathbb{H}_\gamma}^2,
\end{align*}
thanks to \eqref{eq:noise}. 
Utilizing the boundedness of $g^\prime$, one obtains
\begin{align}\label{eq:Gu}
&\sum_{j=1}^\infty\|\nabla(G(u_{\epsilon}(r))\widehat{\mathfrak{u}_j\mu})\|^2_{\mathbb{H}_\gamma}=\sum_{j=1}^\infty\int_{\R^2}|\nabla(g(u_{\epsilon}(r)) \widehat{\mathfrak{u}_j\mu})|^2\gamma^{\vee}\ud x\\\notag
&\lesssim\sum_{j=1}^\infty\int_{\R^2}| \nabla g(u_{\epsilon}(r))\widehat{\mathfrak{u}_j\mu}|^2\gamma^{\vee}\ud x+\sum_{j=1}^\infty\int_{\R^2}|g(u_{\epsilon}(r)) \nabla\widehat{\mathfrak{u}_j\mu}|^2\gamma^{\vee}\ud x\\\notag
&
\lesssim \|\nabla u_\epsilon(r) \|_{\mathbb{H}_\gamma}^2+1+\|u_\epsilon(r)\|_{\mathbb{H}_\gamma}^2.
\end{align}
Collecting the above estimates, we obtain \eqref{eq:nablau} from \eqref{eq:uHp} and \eqref{eq:D1uHp}.
\end{proof}

Utilizing Proposition \ref{prop:S-I-1} and Lemma \ref{lem:nablau}, we obtain the temporal H\"older continuity of the solution to the stochastic RDA equation \eqref{eq:SPDE}. 
 
 \begin{prop}\label{prop:Holder}
Let $b,g$ be continuously differentiable with bounded derivatives and Assumption \ref{Asp:gamma-S} hold. If
 $\psi\in \mathbb{H}_{\sqrt\gamma},\nabla\psi\in \mathbb{H}_\gamma $ and \eqref{eq:noise} hold, then for any $\epsilon\in(0,1]$,
$$\mathbb{E}\left[\|u_{\epsilon}(t)-u_{\epsilon}(s)\|^2_{\mathbb{H}_\gamma}\right]\lesssim (t-s)(1+\epsilon^{-2})\quad\forall~0\le s<t\le \mathsf{T}.$$
\end{prop}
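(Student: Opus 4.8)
The plan is to start from the mild formulation of $u_\epsilon$ and split the increment $u_\epsilon(t)-u_\epsilon(s)$ (for $0\le s<t\le\mathsf{T}$) into five pieces: the initial term $(S_\epsilon(t)-S_\epsilon(s))\psi$, the two \emph{memory} terms $\int_0^s(S_\epsilon(t-r)-S_\epsilon(s-r))B(u_\epsilon(r))\ud r$ and $\int_0^s(S_\epsilon(t-r)-S_\epsilon(s-r))G(u_\epsilon(r))\ud\mathcal{W}(r)$, and the two \emph{increment} terms $\int_s^t S_\epsilon(t-r)B(u_\epsilon(r))\ud r$ and $\int_s^t S_\epsilon(t-r)G(u_\epsilon(r))\ud\mathcal{W}(r)$. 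I would estimate each piece in $L^2(\Omega;\mathbb{H}_\gamma)$ separately and then collect the bounds. Throughout, the device for extracting the factor $(t-s)^{1/2}$ is the semigroup identity $S_\epsilon(t-r)-S_\epsilon(s-r)=S_\epsilon(s-r)(S_\epsilon(t-s)-I)$ combined with the uniform bound \eqref{eq:Seps} on $S_\epsilon(s-r)$, which reduces matters to the temporal H\"older estimate of Proposition \ref{prop:S-I-1}.

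The increment terms are the mildest. For the drift part, $\|S_\epsilon(t-r)\|_{\mathscr{L}(\mathbb{H}_\gamma)}\le C$ together with the linear growth of $b$ gives, after Cauchy--Schwarz in time and \eqref{eq:uHp}, a bound of order $(t-s)^2\lesssim(t-s)$; for the stochastic part, the Burkholder inequality \eqref{eq:BDG}, the bound \eqref{eq:Seps} and the linear growth \eqref{eq:Gv} give a bound of order $(t-s)$. Neither contributes a negative power of $\epsilon$. The initial term is handled by factoring out $S_\epsilon(s)$, using \eqref{eq:Seps}, and then applying Proposition \ref{prop:S-I-1} to $\varphi=\psi$: since $\psi\in\mathbb{H}_{\sqrt\gamma}$ and $\nabla\psi\in\mathbb{H}_\gamma$, its square is $\lesssim(t-s)(\|\nabla\psi\|_{\mathbb{H}_\gamma}^2+\epsilon^{-1}\|\psi\|_{\mathbb{H}_\gamma}^2+\epsilon^{-2}\|\psi\|_{\mathbb{H}_{\sqrt\gamma}}^2)\lesssim(t-s)(1+\epsilon^{-2})$, which already produces the claimed scaling.

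The memory terms carry the real work. Using the factorization above and \eqref{eq:Seps}, I reduce the drift memory term to controlling $\|(S_\epsilon(t-s)-I)B(u_\epsilon(r))\|_{\mathbb{H}_\gamma}$; Proposition \ref{prop:S-I-1} bounds this by $(t-s)^{1/2}$ times $\|\nabla B(u_\epsilon(r))\|_{\mathbb{H}_\gamma}+\epsilon^{-1/2}\|B(u_\epsilon(r))\|_{\mathbb{H}_\gamma}+\epsilon^{-1}\|B(u_\epsilon(r))\|_{\mathbb{H}_{\sqrt\gamma}}$. Since $\nabla B(u_\epsilon)=b'(u_\epsilon)\nabla u_\epsilon$ and $b$ has bounded derivative and linear growth, these reduce to $\|\nabla u_\epsilon(r)\|_{\mathbb{H}_\gamma}$ and to norms of $u_\epsilon(r)$ in $\mathbb{H}_\gamma$ and $\mathbb{H}_{\sqrt\gamma}$. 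For the stochastic memory term I apply the Burkholder inequality \eqref{eq:BDG}, then \eqref{eq:Seps}, and then Proposition \ref{prop:S-I-1} to each basis component $g(u_\epsilon(r))\widehat{\mathfrak{u}_j\mu}$, summing over $j$ with the help of \eqref{eq:Gu} (gradient term), \eqref{eq:Gv} (the $\mathbb{H}_\gamma$ term), and its $\sqrt\gamma$-analogue (the $\mathbb{H}_{\sqrt\gamma}$ term). After Cauchy--Schwarz in time (for the drift memory) and using the Burkholder time integral directly (for the stochastic one), and then taking expectations, the gradient contributions are controlled by Lemma \ref{lem:nablau}, which costs only $\epsilon^{-1}$, while the zeroth-order contributions are controlled by the second moments of $\|u_\epsilon(r)\|_{\mathbb{H}_\gamma}$ and $\|u_\epsilon(r)\|_{\mathbb{H}_{\sqrt\gamma}}$. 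Collecting all five pieces yields $(t-s)(1+\epsilon^{-2})$, the worst power $\epsilon^{-2}$ arising precisely from squaring the $\epsilon^{-1}\|\cdot\|_{\mathbb{H}_{\sqrt\gamma}}$ term of Proposition \ref{prop:S-I-1}.

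I expect the main obstacle to be the bookkeeping needed to make the memory terms land at $\epsilon^{-2}$ rather than a worse power: one must use the \emph{stronger} weighted norm $\mathbb{H}_{\sqrt\gamma}$ (legitimate under Assumption \ref{Asp:gamma-S}, cf.\ Remark \ref{rem:A3}) so that the $\epsilon^{-1}$ factor in Proposition \ref{prop:S-I-1} multiplies an $\mathbb{H}_{\sqrt\gamma}$-norm that remains finite. This in turn requires uniform-in-$\epsilon$ second moment bounds for $u_\epsilon$ in $\mathbb{H}_{\sqrt\gamma}$, which I would obtain by repeating the well-posedness estimate \eqref{eq:uHp} with $\gamma$ replaced by $\sqrt\gamma$ (using $\psi\in\mathbb{H}_{\sqrt\gamma}$ and the integrability built into Assumption \ref{Asp:gamma-S}), together with the gradient moment bound of Lemma \ref{lem:nablau}, whose own $\epsilon^{-1}$ dependence is exactly what keeps the gradient contributions subdominant.
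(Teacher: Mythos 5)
Your proof is correct: every tool it invokes (Proposition \ref{prop:S-I-1}, Lemma \ref{lem:nablau}, the uniform bound \eqref{eq:Seps}, the Burkholder inequality \eqref{eq:BDG}, \eqref{eq:Gv}, \eqref{eq:Gu} and their $\sqrt\gamma$-analogues, and the $\mathbb{H}_{\sqrt\gamma}$ moment bound obtained by rerunning \eqref{eq:uHp} with $\gamma$ replaced by $\sqrt\gamma$) is available under the stated hypotheses, and the worst factor $\epsilon^{-2}$ indeed arises only from squaring the $\epsilon^{-1}\|\cdot\|_{\mathbb{H}_{\sqrt\gamma}}$ term of Proposition \ref{prop:S-I-1}. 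However, you take a genuinely longer route than the paper. The paper never goes back to time $0$: it restarts the mild formulation at time $s$,
\begin{align*}
u_{\epsilon}(t)-u_{\epsilon}(s)=(S_\epsilon(t-s)-I)u_{\epsilon}(s)+\int_s^tS_\epsilon(t-r)B(u_{\epsilon}(r))\ud r+\int_s^tS_\epsilon(t-r)G(u_{\epsilon}(r))\ud\mathcal{W}(r),
\end{align*}
so there are only three terms and no memory terms at all. The two integrals over $[s,t]$ are bounded by $(t-s)$ exactly as you treat your increment terms, and the single remaining term is dispatched by one application of Proposition \ref{prop:S-I-1} to the \emph{random} function $\varphi=u_\epsilon(s)$, combined with $\E[\|\nabla u_\epsilon(s)\|^2_{\mathbb{H}_\gamma}]\lesssim 1+\epsilon^{-1}$ from Lemma \ref{lem:nablau} and $\E[\|u_\epsilon(s)\|^2_{\mathbb{H}_\gamma}+\|u_\epsilon(s)\|^2_{\mathbb{H}_{\sqrt\gamma}}]\le C$ (the paper's \eqref{eq:ulambda}); this alone yields $(t-s)(1+\epsilon^{-2})$. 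Your memory terms---where Proposition \ref{prop:S-I-1} must be applied to $B(u_\epsilon(r))$ and, under the Hilbert--Schmidt sum, to each $g(u_\epsilon(r))\widehat{\mathfrak{u}_j\mu}$, with \eqref{eq:Gu}, \eqref{eq:Gv} and the $\sqrt\gamma$-analogue controlling the sums---reproduce exactly the computation the paper postpones to the proof of Theorem \ref{theo:MS-2D}, where it is unavoidable. So your route pays that bookkeeping twice, while the paper's restart-at-$s$ decomposition extracts the H\"older estimate from a single use of Proposition \ref{prop:S-I-1}; both routes rest on the same a priori gradient and $\sqrt\gamma$-moment bounds, so neither is more general, only shorter or longer.
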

\begin{proof}
For $0\le s<t\le \mathsf{T}$,
\begin{align*}
u_{\epsilon}(t)-u_{\epsilon}(s)&=(S_\epsilon(t-s)-I)u_{\epsilon}(s)+\int_s^tS_\epsilon(t-r)B(u_{\epsilon}(r))\ud r\\
&\quad+\int_s^tS_\epsilon(t-r)G(u_{\epsilon}(r))\ud\mathcal{W}(r).
\end{align*}
By 
$\psi\in  \mathbb{H}_{\sqrt\gamma}\subset\mathbb{H}_\gamma$, \eqref{eq:uHp} and Assumption \ref{Asp:gamma-S} (see Remark \ref{rem:A3}(ii)), we obtain 
\begin{equation}\label{eq:ulambda}
\mathbb{E}\left[\|u_{\epsilon}(s) \|_{\mathbb{H}_\gamma}^2\right]+\mathbb{E}\left[\|u_{\epsilon}(s) \|^2_{\mathbb{H}_{\sqrt\gamma}}\right]\le C(\mathsf{T},\lambda,c_0).
\end{equation}
Taking advantage of \eqref{eq:Seps}, the Burkholder inequality, and \eqref{eq:ulambda},
\begin{align*}
 \mathbb{E}\left[\left\|\int_s^tS_\epsilon(t-r)G(u_{\epsilon}(r))\ud\mathcal{W}(r)\right\|^2_{\mathbb{H}_\gamma}\right]
\lesssim \int_s^t1+\E\left[\|u_{\epsilon}(r)\|^2_{\mathbb{H}_\gamma}\right]\ud r\lesssim t-s.
\end{align*}
By the H\"older inequality, \eqref{eq:Seps}, and \eqref{eq:ulambda},
\begin{align*}
&\quad\ \mathbb{E}\left[\left\|\int_s^tS_\epsilon(t-r)B(u_{\epsilon}(r))\ud r\right\|^2_{\mathbb{H}_\gamma}\right]
\lesssim (t-s) \int_s^t1+\E\left[\|u_{\epsilon}(r)\|^2_{\mathbb{H}_\gamma}\right]\ud r\lesssim (t-s)^2.
\end{align*}
In view of Proposition \ref{prop:S-I-1}, Lemma \ref{lem:nablau}, and \eqref{eq:ulambda},
\begin{align*}
&\E\left[ \|(S_\epsilon(t-s)-I)u_{\epsilon}(s)\|^2_{\mathbb{H}_\gamma}\right]\\\notag
 &\lesssim (t-s)\left(\E[\|\nabla u_{\epsilon}(s)\|^2_{\mathbb{H}_\gamma}]+\epsilon^{-1}\E[\|u_{\epsilon}(s)\|^2_{\mathbb{H}_\gamma}]+\epsilon^{-2} \E[\|u_{\epsilon}(s)\|^2_{\mathbb{H}_{\sqrt\gamma}}]\right)\\
 & \lesssim (t-s)(1+\epsilon^{-2} ).
\end{align*}
Combining the previous three inequalities, we obtain the required result.
\end{proof}

We end this section by proving the mean square convergence order $\frac12$ of the exponential Euler approximation \eqref{eq:EEM} for the stochastic RDA equation \eqref{eq:SPDE}, where the error bound depends on $\epsilon^{-1}$ linearly.

\begin{tho}\label{theo:MS-2D}
Let $b,g$ be continuously differentiable with bounded derivatives and Assumption \ref{Asp:gamma-S} hold. If
 $\psi\in \mathbb{H}_{\sqrt\gamma},\nabla\psi\in \mathbb{H}_\gamma $ and \eqref{eq:noise} hold, then for any $\epsilon\in(0,1]$, 
 \begin{align*}
\mathbb{E}\left[\|u_{\epsilon}(t_n)-U_{\epsilon}^{n}\|^2_{\mathbb{H}_\gamma}\right]
\lesssim \tau(1+\epsilon^{-2}),\quad n=0,1,\ldots,N.
\end{align*}
\end{tho}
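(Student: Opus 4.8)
The starting point is the decomposition \eqref{eq:u-U}, which already expresses $\mathbb{E}[\|u_\epsilon(t_n)-U_\epsilon^n\|^2_{\mathbb{H}_\gamma}]$ as a sum of four pieces: two semigroup-difference terms carrying $B(u_\epsilon(s))$ and $G(u_\epsilon(s))$, a temporal-H\"older term $\mathbb{E}[\|u_\epsilon(s)-u_\epsilon(t_j)\|^2_{\mathbb{H}_\gamma}]$, and the accumulated error $\mathbb{E}[\|u_\epsilon(t_j)-U_\epsilon^j\|^2_{\mathbb{H}_\gamma}]$. The plan is to bound the first three by $\tau(1+\epsilon^{-2})$ and to absorb the fourth through a discrete Gronwall inequality.

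The key manipulation for the two semigroup-difference terms is the commutative factorization $S_\epsilon(t_n-s)-S_\epsilon(t_n-t_j)=S_\epsilon(t_n-s)(I-S_\epsilon(s-t_j))$, valid since $t_n-t_j=(t_n-s)+(s-t_j)$ with $s-t_j\le\tau$. Because $S_\epsilon(t_n-s)$ is uniformly bounded on $\mathbb{H}_\gamma$ by \eqref{eq:Seps}, I can discard the outer semigroup and apply Proposition \ref{prop:S-I-1} with increment $s-t_j$ directly to $B(u_\epsilon(s))$ (respectively to each $G(u_\epsilon(s))\widehat{\mathfrak{u}_l\mu}$), which avoids a second appeal to the gradient estimate of Lemma \ref{prop:Lrho2}. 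For the drift piece this produces the bound $\tau^{1/2}(\|\nabla B(u_\epsilon(s))\|_{\mathbb{H}_\gamma}+\epsilon^{-1/2}\|B(u_\epsilon(s))\|_{\mathbb{H}_\gamma}+\epsilon^{-1}\|B(u_\epsilon(s))\|_{\mathbb{H}_{\sqrt\gamma}})$; using that $b$ has bounded derivative (so $\|\nabla B(u_\epsilon(s))\|_{\mathbb{H}_\gamma}\lesssim\|\nabla u_\epsilon(s)\|_{\mathbb{H}_\gamma}$, with linear growth of $B$ in both weighted norms), squaring, and invoking the moment bounds \eqref{eq:uHp}, \eqref{eq:ulambda} together with the gradient bound \eqref{eq:nablau} of Lemma \ref{lem:nablau} (where $\mathbb{E}\|\nabla u_\epsilon(s)\|^2_{\mathbb{H}_\gamma}\lesssim 1+\epsilon^{-1}$), the integrand is $\lesssim\tau(1+\epsilon^{-2})$, the dominant contribution coming from the $\epsilon^{-1}\|\cdot\|_{\mathbb{H}_{\sqrt\gamma}}$ term.

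For the diffusion piece I would run the same argument componentwise on the Hilbert--Schmidt norm, applying Proposition \ref{prop:S-I-1} to each $G(u_\epsilon(s))\widehat{\mathfrak{u}_l\mu}$ and then squaring and summing over $l$. The sums $\sum_l\|G(u_\epsilon(s))\widehat{\mathfrak{u}_l\mu}\|^2_{\mathbb{H}_\gamma}$ and $\sum_l\|G(u_\epsilon(s))\widehat{\mathfrak{u}_l\mu}\|^2_{\mathbb{H}_{\sqrt\gamma}}$ are controlled by \eqref{eq:Gv} and its $\sqrt\gamma$-analogue, while $\sum_l\|\nabla(G(u_\epsilon(s))\widehat{\mathfrak{u}_l\mu})\|^2_{\mathbb{H}_\gamma}$ is exactly the quantity \eqref{eq:Gu} estimated inside the proof of Lemma \ref{lem:nablau}, which is precisely the point where the second-moment condition \eqref{eq:noise} on $\mu$ enters. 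This again gives $\lesssim\tau(1+\epsilon^{-2})$. The H\"older term is immediate from Proposition \ref{prop:Holder}, yielding $\mathbb{E}[\|u_\epsilon(s)-u_\epsilon(t_j)\|^2_{\mathbb{H}_\gamma}]\lesssim\tau(1+\epsilon^{-2})$. After summing the at most $N$ intervals (each integration over $[t_j,t_{j+1}]$ contributing a further factor $\tau$, and $N\tau=\mathsf{T}$), the first three terms together are $\lesssim\tau(1+\epsilon^{-2})$.

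Collecting these bounds, \eqref{eq:u-U} reduces to $\mathbb{E}[\|u_\epsilon(t_n)-U_\epsilon^n\|^2_{\mathbb{H}_\gamma}]\lesssim\tau(1+\epsilon^{-2})+\tau\sum_{j=0}^{n-1}\mathbb{E}[\|u_\epsilon(t_j)-U_\epsilon^j\|^2_{\mathbb{H}_\gamma}]$, and the discrete Gronwall inequality closes the estimate uniformly in $n=0,\dots,N$. I expect the main obstacle to be the careful bookkeeping of the powers of $\epsilon$ in the two semigroup-difference terms: since $S_\epsilon$ has no smoothing effect, the $\mathbb{H}_{\sqrt\gamma}$-norm forced out by Proposition \ref{prop:S-I-1} carries the worst factor $\epsilon^{-1}$, and one must confirm that after squaring this stacks to exactly $\epsilon^{-2}$ rather than something larger. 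This relies on Assumption \ref{Asp:gamma-S}, which guarantees the integrability of $\sqrt\gamma^\vee$ and hence the finiteness of the $\mathbb{H}_{\sqrt\gamma}$-moments of $u_\epsilon$ appearing in \eqref{eq:ulambda}.
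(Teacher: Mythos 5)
Your proposal is correct and follows essentially the same route as the paper's proof: the same decomposition \eqref{eq:u-U}, the same semigroup factorization $S_\epsilon(t_n-s)-S_\epsilon(t_n-t_j)=S_\epsilon(t_n-s)(I-S_\epsilon(s-t_j))$ combined with \eqref{eq:Seps}, Proposition \ref{prop:S-I-1} applied to $B(u_\epsilon(s))$ and componentwise to $G(u_\epsilon(s))\widehat{\mathfrak{u}_l\mu}$, with \eqref{eq:Gu}, \eqref{eq:Gv}, \eqref{eq:ulambda}, Lemma \ref{lem:nablau} and Proposition \ref{prop:Holder} feeding into the discrete Gronwall inequality. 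Your accounting of the powers of $\epsilon$ (the dominant $\epsilon^{-2}$ arising from squaring the $\epsilon^{-1}\|\cdot\|_{\mathbb{H}_{\sqrt\gamma}}$ term) matches the paper exactly.
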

\begin{proof}
Taking advantage of \eqref{eq:Seps} and the semigroup property of $\{S_\epsilon(t)\}_{t\in[0,\mathsf T]}$,
\begin{align*}
& \mathbb{E}\left[\|\left(S_\epsilon(t_n-s)-S_\epsilon(t_n-t_j)\right)G(u_{\epsilon}(s))\|^2_{\mathscr{L}_2(RK,\mathbb{H}_\gamma)}\right]\\
&\lesssim \sum_{l=1}^\infty\mathbb{E}\left[\left\|\left(S_\epsilon(s-t_j)-I\right)G(u_{\epsilon}(s))\widehat{\mathfrak{u}_l\mu}\right\|^2_{\mathbb{H}_\gamma}\right]=:J.
\end{align*}
Proposition \ref{prop:S-I-1}, \eqref{eq:Gu} and \eqref{eq:Gv} 
indicate that 
\begin{align*} 
J&\lesssim (s-t_j)\sum_{l=1}^\infty\mathbb{E}\left[\|\nabla(G(u_{\epsilon}(s))\widehat{\mathfrak{u}_l\mu})\|^2_{\mathbb{H}_\gamma}\right]+\frac{1}{\epsilon}(s-t_j)\mathbb{E}\left[\|G(u_{\epsilon}(s))\|_{\mathscr{L}_2(RK,\mathbb{H}_\gamma)}^2\right]\\
&\quad+\frac{1}{\epsilon^2}(s-t_j)\mathbb{E}\left[\|G(u_{\epsilon}(s)) \|_{\mathscr{L}_2(RK,\mathbb{H}_{\sqrt\gamma})}^2\right]\\
&\lesssim (s-t_j)\mathbb{E}\left[\|\nabla u_{\epsilon}(s)\|^2_{\mathbb{H}_\gamma}\right]+\left(1+\frac{1}{\epsilon}\right)(s-t_j)\left(\mathbb{E}\left[\|u_{\epsilon}(s)\|^2_{\mathbb{H}_\gamma}\right]+1\right)\\
&\quad+\frac{1}{\epsilon^2}(s-t_j)\left(\mathbb{E}\left[\|u_{\epsilon}(s) \|^2_{\mathbb{H}_{\sqrt\gamma}}\right]+1\right).
\end{align*}
By \eqref{eq:ulambda} and Lemma \ref{lem:nablau}, we have that for any $s\in(t_j,t_{j+1}]$ with $j=0,1,\ldots,N-1$,
$$J\lesssim(1+\epsilon^{-2})(s-t_j)\lesssim (1+\epsilon^{-2})\tau.$$
Similarly, by virtue of \eqref{eq:Seps}, Proposition \ref{prop:S-I-1}, Lemma \ref{lem:nablau}, and \eqref{eq:ulambda},
\begin{align*}
 \mathbb{E}\left[\|\left(S_\epsilon(t_n-s)-S_\epsilon(t_n-t_j)\right)B(u_{\epsilon}(s))\|^2_{\mathbb{H}_\gamma}\right]
\lesssim (1+\epsilon^{-2})\tau
\end{align*}
for any $s\in(t_j,t_{j+1}]$ with $j=0,1,\ldots,N-1$.
Inserting the above estimates into 
\eqref{eq:u-U} and using Proposition \ref{prop:Holder}, we arrive at
\begin{align*}
\mathbb{E}\left[\|u_{\epsilon}(t_n)-U_{\epsilon}^{n}\|^2_{\mathbb{H}_\gamma}\right]\lesssim \tau(1+\epsilon^{-2})
+\tau\sum_{j=0}^{n-1}\mathbb{E}\left[\|u_{\epsilon}(t_{j})-U_{\epsilon}^{j}\|^2_{\mathbb{H}_\gamma}\right],\end{align*}
which along with the discrete Gronwall inequality leads to the desired result.
 \end{proof}

\section{Fast advection asymptotics}\label{S4}
It has been shown in \cite[Theorem 5.3]{CF19} that under Assumption \ref{Asp:gamma} and the condition \eqref{eq:Tneq0}, the limiting process of $u_\epsilon^\wedge$, as $\epsilon\to 0$,
is the solution $\bar{u}$ of the SPDE \eqref{eq:utzk} on the graph $\Gamma$. Specifically,
 for any $\psi\in \mathbb{H}_\gamma$, the asymptotical behavior \eqref{eq:u-u} of the stochastic RDA equation \eqref{eq:SPDE} holds. As shown in section \ref{S3}, for a fixed $\epsilon>0$, the exponential Euler approximation \eqref{eq:EEM} converges in the mean square sense to \eqref{eq:SPDE} as the time stepsize $\tau$ tends to $0$. A natural question arises: 
\begin{itemize}
\item[] \textit{Does the exponential Euler approximation preserve the asymptotical behavior \eqref{eq:u-u}?} 
\end{itemize}
In section \ref{S4.1}, we provide an affirmative answer to the above equation
 by employing the exponential Euler approximation to \eqref{eq:utzk} and establishing the fast advection asymptotics of \eqref{eq:EEM} in terms of the exponential Euler approximation of \eqref{eq:utzk}. 

\subsection{Exponential Euler approximation for SPDE on graph}\label{S4.1}

Let us first introduce the exponential Euler approximation of \eqref{eq:utzk}, which reads 
\begin{equation}\label{eq:Un}
\bar{U}^{n}=\bar{S}(\tau) \bar{U}^{n-1}+\bar{S}(\tau) B(\bar{U}^{n-1}) \tau+\bar{S}(\tau) G(\bar{U}^{n-1}) \delta\bar{\mathcal{W}}_{n-1},\quad n=1,\ldots,N
\end{equation}
with the initial value $\bar{U}^{0}=\psi^{\wedge}$. Here, $\delta\bar{\mathcal{W}}_{n-1}:=\bar{\mathcal{W}}(t_n)-\bar{\mathcal{W}}(t_{n-1})$ is the increment of $\bar{\mathcal{W}}$. One has by iteration that
\begin{align*}
\bar{U}^{n}=\bar{S}(t_n) \psi^\wedge+\int_0^{t_n}\bar{S}(t_n- \lfloor s\rfloor)B(\bar{U}^{\frac{\lfloor s\rfloor}{\tau}}) \ud s+\int_0^{t_n}\bar{S}(t_n- \lfloor s\rfloor) G(\bar{U}^{\frac{\lfloor s\rfloor}{\tau}}) \ud\bar{\mathcal{W}}(s),
\end{align*}
where $\lfloor s\rfloor=t_i$ for $s\in[t_i,t_{i+1})$ with $i=0,1,\ldots,N-1$.
By means of \eqref{eq:St} and \eqref{eq:barGlin-growth}, 
 it can be verified that for any $p\ge1$,
\begin{equation}\label{eq:UnHp}
 \sup_{0\le n\le N}\E\left[\|\bar U^n\|_{\bar{\mathbb{H}}_\gamma}^p\right]
\le C.
 \end{equation}

To characterize the asymptotical behavior of $\{U_{\epsilon}^{n}\}_{1\le n\le N}$ in terms of $\{\bar{U}^{n}\}_{1\le n\le N}$, we regard the initial time step as an intuition, which leads us to the subsequent result.

\begin{lem}\label{prop:AC}
Let Assumption \ref{Asp:gamma} and \eqref{eq:Tneq0} 
hold, and $\psi\in\mathbb{H}_\gamma$. Then $U_{\epsilon}^{1}$ converges to $(\bar{U}^{1})^\vee$ in $L^2(\Omega,\mathbb{H}_\gamma)$ as $\epsilon$ tends to zero (for a fixed $\tau>0$) if and only if
\begin{gather}\label{eq:StauG}
\|\bar{S}(\tau)^\vee (G(\psi)-G((\psi^{\wedge})^{\vee}))\|_{\mathscr{L}_2(RK,\mathbb{H}_\gamma)}+\|\bar{S}(\tau)^\vee (B(\psi)-B((\psi^{\wedge})^{\vee}))\|_{\mathbb{H}_\gamma}=0.
\end{gather}
 In particular, $\psi=(\psi^{\wedge})^{\vee}$ is a sufficient condition for \eqref{eq:StauG}.
\end{lem}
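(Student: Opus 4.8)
The plan is to compare $U_\epsilon^1$ and $(\bar U^1)^\vee$ at the very first step. By \eqref{eq:EEM} and \eqref{eq:Un} (with $t_0=0$, $\mathcal W(0)=\bar{\mathcal W}(0)=0$) we have $U_\epsilon^1 = S_\epsilon(\tau)\psi + \tau S_\epsilon(\tau)B(\psi) + S_\epsilon(\tau)G(\psi)\delta\mathcal W_0$ and $\bar U^1 = \bar S(\tau)\psi^\wedge + \tau\bar S(\tau)B(\psi^\wedge) + \bar S(\tau)G(\psi^\wedge)\delta\bar{\mathcal W}_0$. First I would rewrite $(\bar U^1)^\vee$ using the operator $\mathcal A\mapsto\mathcal A^\vee$ on $\mathbb{H}_\gamma$. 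The crucial algebraic observation is that $(\psi^\wedge)^\vee$ is constant on every connected component $C_k(z)$, so that averaging a Nemytskij image of it against $\mu_{z,k}$ returns the value itself; this yields the commutation identities $B(\psi^\wedge) = \big(B((\psi^\wedge)^\vee)\big)^\wedge$ and, for each $j$, $\big[(G(\psi)-G((\psi^\wedge)^\vee))\widehat{\mathfrak{u}_j\mu}\big]^\wedge = \big[G(\psi)\widehat{\mathfrak{u}_j\mu}\big]^\wedge - G(\psi^\wedge)(\widehat{\mathfrak{u}_j\mu})^\wedge$. Consequently $(\bar S(\tau)\psi^\wedge)^\vee = \bar S(\tau)^\vee\psi$ and $(\bar S(\tau)B(\psi^\wedge))^\vee = \bar S(\tau)^\vee B((\psi^\wedge)^\vee)$, and, expanding both $\mathcal W$ and $\bar{\mathcal W}$ along the \emph{same} Brownian motions $\{\beta_j\}$ via their Karhunen--Lo\`eve expansions (cf.\ \eqref{eq:KL} and the definition of $\bar{\mathcal W}$), the diffusion term of $(\bar U^1)^\vee$ becomes $\sum_j \big(\bar S(\tau)G(\psi^\wedge)(\widehat{\mathfrak{u}_j\mu})^\wedge\big)^\vee\delta\beta_j$.

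Next I would split the difference into an $\epsilon$-dependent part and an $\epsilon$-independent remainder,
\begin{align*}
U_\epsilon^1 - (\bar U^1)^\vee = D^\epsilon + R, \qquad R = R_B + R_G,
\end{align*}
where $D^\epsilon = (S_\epsilon(\tau)-\bar S(\tau)^\vee)\psi + \tau(S_\epsilon(\tau)-\bar S(\tau)^\vee)B(\psi) + (S_\epsilon(\tau)-\bar S(\tau)^\vee)G(\psi)\delta\mathcal W_0$, while $R_B = \tau\bar S(\tau)^\vee(B(\psi)-B((\psi^\wedge)^\vee))$ and $R_G = \sum_j \bar S(\tau)^\vee\big[(G(\psi)-G((\psi^\wedge)^\vee))\widehat{\mathfrak{u}_j\mu}\big]\delta\beta_j$ are free of $\epsilon$ (the identity for $R_G$ uses the commutation relation above). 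To show $\|D^\epsilon\|_{L^2(\Omega,\mathbb{H}_\gamma)}\to 0$, the two deterministic terms vanish directly from \eqref{eq:Seps-S} with $t=\tau$, while for the stochastic term I would use the independence of the increments $\{\delta\beta_j\}$ to write $\E\|(S_\epsilon(\tau)-\bar S(\tau)^\vee)G(\psi)\delta\mathcal W_0\|^2_{\mathbb{H}_\gamma} = \tau\sum_j\|(S_\epsilon(\tau)-\bar S(\tau)^\vee)(g(\psi)\widehat{\mathfrak{u}_j\mu})\|^2_{\mathbb{H}_\gamma}$, let each summand tend to $0$ by \eqref{eq:Seps-S}, and pass the limit inside the sum by dominated convergence, the dominating function being supplied by the uniform bounds \eqref{eq:Seps}, \eqref{eq:St} together with the summability $\sum_j\|g(\psi)\widehat{\mathfrak{u}_j\mu}\|^2_{\mathbb{H}_\gamma} = \|G(\psi)\|^2_{\mathscr{L}_2(RK,\mathbb{H}_\gamma)}<\infty$ from \eqref{eq:Gv}.

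Since $R$ is independent of $\epsilon$, it then follows that $U_\epsilon^1 \to (\bar U^1)^\vee$ in $L^2(\Omega,\mathbb{H}_\gamma)$ if and only if $\E\|R\|^2_{\mathbb{H}_\gamma}=0$. I would compute this norm exactly: $R_B$ is deterministic and $R_G$ has mean zero, so the cross term drops and $\E\|R\|^2_{\mathbb{H}_\gamma} = \|R_B\|^2_{\mathbb{H}_\gamma} + \E\|R_G\|^2_{\mathbb{H}_\gamma}$; the Itô isometry gives $\E\|R_G\|^2_{\mathbb{H}_\gamma} = \tau\|\bar S(\tau)^\vee(G(\psi)-G((\psi^\wedge)^\vee))\|^2_{\mathscr{L}_2(RK,\mathbb{H}_\gamma)}$ and $\|R_B\|^2_{\mathbb{H}_\gamma} = \tau^2\|\bar S(\tau)^\vee(B(\psi)-B((\psi^\wedge)^\vee))\|^2_{\mathbb{H}_\gamma}$. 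As $\tau>0$ is fixed and both summands are nonnegative, $\E\|R\|^2_{\mathbb{H}_\gamma}=0$ is equivalent to the simultaneous vanishing of both norms, which is precisely \eqref{eq:StauG}. The final sufficiency claim is then immediate: if $\psi=(\psi^\wedge)^\vee$ then $B(\psi)=B((\psi^\wedge)^\vee)$ and $G(\psi)=G((\psi^\wedge)^\vee)$, so \eqref{eq:StauG} holds trivially.

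I expect the main obstacle to be the treatment of the stochastic terms: one must couple $\mathcal W$ and $\bar{\mathcal W}$ through their common Brownian motions $\{\beta_j\}$ so that the two diffusion contributions can be compared term by term, and then verify the Nemytskij commutation identities, which rely essentially on $(\psi^\wedge)^\vee$ being constant along each connected level-set component. The interchange of the $\epsilon$-limit with the infinite series in $D^\epsilon$ likewise requires the dominated-convergence argument above rather than a naive termwise passage to the limit.
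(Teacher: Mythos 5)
Your proposal is correct, and it rests on the same ingredients as the paper's own proof: the commutation identities $(f^\vee\varphi)^\wedge=f\varphi^\wedge$, $G((\psi^{\wedge})^{\vee})=(G(\psi^{\wedge}))^{\vee}$ and $\delta\bar{\mathcal{W}}_0=(\delta\mathcal{W}_0)^\wedge$, the semigroup asymptotics \eqref{eq:Seps-S}, and the It\^o isometry. The logical organization of the necessity direction differs, however. The paper keeps the full difference $U_{\epsilon}^{1}-(\bar{U}^{1})^\vee$ in the mixed form \eqref{eq:U1-bU1}, computes its second moment in \eqref{eq:EU1}, and then extracts the two conditions in \eqref{eq:StauG} one at a time by applying the elementary lower bound $(a+b)^2\ge\frac12 a^2-b^2$ twice, with different groupings of terms. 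You instead split the difference exactly into an $\epsilon$-dependent part $D^\epsilon$ and an $\epsilon$-free residual $R=R_B+R_G$; since $D^\epsilon\to0$ in $L^2(\Omega,\mathbb{H}_\gamma)$, continuity of the norm gives $\E[\|U_{\epsilon}^{1}-(\bar{U}^{1})^\vee\|_{\mathbb{H}_\gamma}^2]\to\E[\|R\|_{\mathbb{H}_\gamma}^2]$, and a single exact computation of $\E[\|R\|_{\mathbb{H}_\gamma}^2]$ (the cross term vanishes because $R_B$ is deterministic and $R_G$ is centered, and the It\^o isometry handles $R_G$) yields necessity and sufficiency simultaneously. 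This buys a cleaner ``only if'' argument at no extra cost. A further small point in your favor: the limit $\|(S_\epsilon(\tau)-\bar{S}(\tau)^\vee)G(\psi)\|_{\mathscr{L}_2(RK,\mathbb{H}_\gamma)}\to0$, which the paper asserts in \eqref{eq:St-St} as a direct consequence of \eqref{eq:Seps-S}, does require the termwise-limit-plus-domination argument you spell out, since \eqref{eq:Seps-S} concerns a single element of $\mathbb{H}_\gamma$; your dominating summable sequence $C\|g(\psi)\widehat{\mathfrak{u}_j\mu}\|_{\mathbb{H}_\gamma}^2$, obtained from \eqref{eq:Seps}, \eqref{eq:St} and \eqref{eq:Gv}, is exactly what is needed there.
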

\begin{proof}
For any $f\in \bar{\mathbb{H}}_\gamma$ and $\varphi\in \mathbb{H}_\gamma$, $(f^{\vee}\varphi)^{\wedge}=f\varphi^\wedge$ (see \cite[Formula (3.5)]{CF19}. 
Note that 
$g((\psi^{\wedge})^{\vee})=(g(\psi^{\wedge}))^{\vee}$ implies that $G((\psi^{\wedge})^{\vee})=(G(\psi^{\wedge}))^{\vee}$,
which together with $\delta\bar{\mathcal{W}}_0=(\delta\mathcal{W}_0)^\wedge$ yields  \begin{align*}
(\bar{S}(\tau)G(\psi^{\wedge})\delta\bar{\mathcal{W}}_0)^{\vee}&=(\bar{S}(\tau)G(\psi^{\wedge})(\delta\mathcal{W}_0)^\wedge)^{\vee}\\
&=(\bar{S}(\tau)(G((\psi^{\wedge})^{\vee})\delta\mathcal{W}_0)^\wedge)^{\vee}=\bar{S}(\tau)^{\vee}(G((\psi^{\wedge})^{\vee})\delta\mathcal{W}_0).
\end{align*}
Hence, for the first step, we have 
\begin{align}\label{eq:U1-bU1}
U_{\epsilon}^{1}-(\bar{U}^{1})^\vee&=S_\epsilon(\tau)\psi-\bar{S}(\tau)^\vee \psi+\tau\left[S_\epsilon(\tau)B(\psi)-\bar{S}(\tau)^\vee B((\psi^{\wedge})^{\vee})\right]\\\notag
&\quad +\left[S_\epsilon(\tau)G(\psi)-\bar{S}(\tau)^\vee G((\psi^{\wedge})^{\vee})\right]\delta\mathcal{W}_0.
\end{align}
Due to   \eqref{eq:Seps-S}, it holds that
\begin{align}\label{eq:St-St}\notag
\lim_{\epsilon\to0}\|(S_\epsilon(\tau)-\bar{S}(\tau)^\vee )G(\psi)\|_{\mathscr{L}_2(RK,\mathbb{H}_\gamma)}&=\lim_{\epsilon\to0}\left\|(S_\epsilon(\tau)-\bar{S}(\tau)^\vee )B(\psi)\right\|_{\mathbb{H}_\gamma}\\
&=\lim_{\epsilon\to0}\|S_\epsilon(\tau)\psi-\bar{S}(\tau)^\vee \psi\|_{\mathbb{H}_\gamma}=0.
\end{align} 
In view of the It\^o isometry,
\begin{align}\label{eq:EU1}\notag
\E\left[\|U_{\epsilon}^{1}-(\bar{U}^{1})^\vee\|_{\mathbb{H}_\gamma}^2\right]&=\|S_\epsilon(\tau)\psi-\bar{S}(\tau)^\vee \psi+\tau\left[S_\epsilon(\tau)B(\psi)-\bar{S}(\tau)^\vee B((\psi^{\wedge})^{\vee})\right]\|_{\mathbb{H}_\gamma}^2\\
&\quad+\tau\|S_\epsilon(\tau)G(\psi)-\bar{S}(\tau)^\vee G((\psi^{\wedge})^{\vee})\|_{\mathscr{L}_2(RK,\mathbb{H}_\gamma)}^2.
\end{align}
By the elementary inequality $(a+b)^2\ge\frac12 a^2-b^2$ for $a,b\in\R$, we obtain
\begin{align*}
\E\left[\|U_{\epsilon}^{1}-(\bar{U}^{1})^\vee\|_{\mathbb{H}_\gamma}^2\right]&\ge\tau\|S_\epsilon(\tau)G(\psi)-\bar{S}(\tau)^\vee G((\psi^{\wedge})^{\vee})\|_{\mathscr{L}_2(RK,\mathbb{H}_\gamma)}^2\\
&\ge \frac12\tau\|\bar{S}(\tau)^\vee (G(\psi)-G((\psi^{\wedge})^{\vee}))\|_{\mathscr{L}_2(RK,\mathbb{H}_\gamma)}^2\\
&\quad-\tau\|(S_\epsilon(\tau)-\bar{S}(\tau)^\vee )G(\psi)\|_{\mathscr{L}_2(RK,\mathbb{H}_\gamma)}^2.
\end{align*}
Taking \eqref{eq:St-St} further into account,  we deduce that a necessary condition for $\lim_{\epsilon\to0}\E[\|U_{\epsilon}^{1}-(\bar{U}^{1})^\vee\|_{\mathbb{H}_\gamma}^2]=0$ is
\begin{equation}\label{eq:Stau}
\|\bar{S}(\tau)^\vee (G(\psi)-G((\psi^{\wedge})^{\vee}))\|_{\mathscr{L}_2(RK,\mathbb{H}_\gamma)}=0.
\end{equation}

On the other hand, by \eqref{eq:EU1}, we also have
\begin{align*}
&\E\left[\|U_{\epsilon}^{1}-(\bar{U}^{1})^\vee\|_{\mathbb{H}_\gamma}^2\right]\\
&\ge\|S_\epsilon(\tau)\psi-\bar{S}(\tau)^\vee \psi+\tau\left[S_\epsilon(\tau)B(\psi)-\bar{S}(\tau)^\vee B((\psi^{\wedge})^{\vee})\right]\|_{\mathbb{H}_\gamma}^2\\
&\ge\frac12\tau^2\left\|S_\epsilon(\tau)B(\psi)-\bar{S}(\tau)^\vee B((\psi^{\wedge})^{\vee})\right\|_{\mathbb{H}_\gamma}^2-\|S_\epsilon(\tau)\psi-\bar{S}(\tau)^\vee \psi\|_{\mathbb{H}_\gamma}^2\\
&\ge\frac14\tau^2\left\|\bar{S}(\tau)^\vee (B(\psi)-B((\psi^{\wedge})^{\vee}))\right\|_{\mathbb{H}_\gamma}^2
-\frac12\tau^2\left\|(S_\epsilon(\tau)-\bar{S}(\tau)^\vee )B(\psi)\right\|_{\mathbb{H}_\gamma}^2\\
&\quad-\|S_\epsilon(\tau)\psi-\bar{S}(\tau)^\vee \psi\|_{\mathbb{H}_\gamma}^2.
\end{align*}
Thanks to \eqref{eq:St-St}, another 
necessary condition for $\lim_{\epsilon\to0}\E[\|U_{\epsilon}^{1}-(\bar{U}^{1})^\vee\|_{\mathbb{H}_\gamma}^2]=0$ is
\begin{equation*}
\|\bar{S}(\tau)^\vee (B(\psi)-B((\psi^{\wedge})^{\vee}))\|_{\mathbb{H}_\gamma}=0,
\end{equation*}
which together with \eqref{eq:Stau} yields the necessity of  \eqref{eq:StauG}. Finally, the sufficiency of \eqref{eq:StauG} follows from \eqref{eq:EU1}, \eqref{eq:St-St}, and the triangle inequality.
\end{proof}

As indicated by Lemma \ref{prop:AC},
 compared to the continuous case in \cite[Theorem 5.3]{CF19}, 
 we need to impose an additional condition \eqref{eq:StauG} such that 
 $U_{\epsilon}^{1}$ converges to $(\bar{U}^{1})^\vee$ in $L^2(\Omega,\mathbb{H}_\gamma)$.
This is primarily because the numerical solution $U_\epsilon^1$ depends solely on the initial value $U_\epsilon^0$, rather than on the solution process over the historical time interval $[0,\tau)$, as is the case in continuous case.
In fact,
for the continuous case, instead of \eqref{eq:U1-bU1}, one has that  
\begin{align*}
u_\epsilon(\tau)-(\bar{u}(\tau))^\vee
&=S_\epsilon(\tau)\psi-\bar{S}(\tau)^\vee \psi\\\notag
&\quad+\int_0^{\tau}\left[S_\epsilon(\tau-r)B(u_\epsilon(r))-\bar{S}(\tau-r)^\vee B((\bar{u}(r))^{\vee})\right]\ud r\\\notag
&\quad+\int_0^{\tau} \left[S_\epsilon(\tau-r)G(u_\epsilon(r))-\bar{S}(\tau-r)^\vee G((\bar{u}(r))^{\vee})\right]\ud\mathcal{W}(r)\\
&=:\mathcal{K}_{\epsilon,1}(\tau)+\mathcal{K}_{\epsilon,2}(\tau)+\mathcal{K}_{\epsilon,3}(\tau).
\end{align*}
As shown in \cite[Lemma 5.4]{CF19}, for any $0<\tau_0\le t\le \tau$,
\begin{equation*}
\E\left[\sup_{s\in[0,t]}\|\mathcal{K}_{\epsilon,2}(t)\|_{\mathbb{H}_\gamma}^2\right]\le C\int_{\tau_0}^t\E\left[\sup_{r\in[\tau_0,s]}\|u_\epsilon(r)-(\bar{u}(r))^\vee\|_{\mathbb{H}_\gamma}^2\right]\ud s+R_{\mathsf{\tau}}(\tau_0,\epsilon)
\end{equation*}
for some constant $R_{\mathsf{\tau}}(\tau_0,\epsilon)$ satisfying $\lim_{\epsilon,\tau_0\to 0}R_{\mathsf{\tau}}(\tau_0,\epsilon)=0$. The above inequality enables the use of the Gronwall inequality to control $\|\mathcal{K}_{\epsilon,2}(t)\|_{\mathbb{H}_\gamma}$, and $\|\mathcal{K}_{\epsilon,3}(t)\|_{\mathbb{H}_\gamma}$ can be treated similarly.

The following result reveals that the condition \eqref{eq:StauG} is also sufficient under which 
the exponential Euler approximation of \eqref{eq:SPDE}  can be approximately characterized by that of \eqref{eq:utzk} in the fast advection limit.

 \begin{tho}\label{theo:asy}
Let Assumption \ref{Asp:gamma} and \eqref{eq:Tneq0} 
hold, and $\psi\in\mathbb{H}_\gamma$. Then under the condition \eqref{eq:StauG},  for any fixed $\tau>0$,
\begin{equation}\label{eq:Un-Un}
\lim_{\epsilon\to 0}\E\left[\sup_{1\le n\le N}\|U_{\epsilon}^{n}-(\bar U^{n})^{\vee}\|_{\mathbb{H}_\gamma}^p\right]=0\quad \forall~p\ge2.
\end{equation}
 In particular,   \eqref{eq:Un-Un} holds when
 $\psi=(\psi^{\wedge})^{\vee}$.
\end{tho}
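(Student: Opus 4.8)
The plan is to prove \eqref{eq:Un-Un} by induction on the step index $n$, using the fact that for a fixed $\tau>0$ the number of steps $N=\mathsf T/\tau$ is finite. Writing $e_\epsilon^n:=U_\epsilon^n-(\bar U^n)^\vee$, the bound $\E[\sup_{1\le n\le N}\|e_\epsilon^n\|_{\mathbb{H}_\gamma}^p]\le\sum_{n=1}^N\E[\|e_\epsilon^n\|_{\mathbb{H}_\gamma}^p]$ reduces the claim to showing $\lim_{\epsilon\to0}\E[\|e_\epsilon^n\|_{\mathbb{H}_\gamma}^p]=0$ for each fixed $n$. The base case $n=1$ is exactly the sufficiency direction of Lemma \ref{prop:AC}, upgraded from $L^2$ to $L^p$: the stochastic part of \eqref{eq:U1-bU1} has the form $\Phi_\epsilon\,\delta\mathcal{W}_0$ with a \emph{deterministic} $\mathscr{L}_2(RK,\mathbb{H}_\gamma)$-valued coefficient $\Phi_\epsilon$, so $\E[\|\Phi_\epsilon\,\delta\mathcal{W}_0\|_{\mathbb{H}_\gamma}^p]\lesssim(\tau\|\Phi_\epsilon\|_{\mathscr{L}_2(RK,\mathbb{H}_\gamma)}^2)^{p/2}$, and all three summands of \eqref{eq:U1-bU1} vanish as $\epsilon\to0$ by \eqref{eq:St-St} together with the hypothesis \eqref{eq:StauG}.

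For the inductive step I would first rewrite $(\bar U^n)^\vee$ by the commutation identities used in the proof of Lemma \ref{prop:AC}, namely $(f^\vee)^\wedge=f$, $(B(f))^\vee=B(f^\vee)$, $(f^\vee\varphi)^\wedge=f\varphi^\wedge$, and $\mathcal A^\vee\varphi=(\mathcal A\varphi^\wedge)^\vee$, to get
\begin{equation*}
\begin{split}
(\bar U^n)^\vee&=\bar{S}(\tau)^\vee(\bar U^{n-1})^\vee+\tau\,\bar{S}(\tau)^\vee B\big((\bar U^{n-1})^\vee\big)\\
&\quad+\bar{S}(\tau)^\vee G\big((\bar U^{n-1})^\vee\big)\,\delta\mathcal{W}_{n-1}.
\end{split}
\end{equation*}
Subtracting this from \eqref{eq:EEM}, and adding and subtracting $S_\epsilon(\tau)$ applied to the limiting arguments, yields
\begin{equation*}
\begin{split}
e_\epsilon^n&=S_\epsilon(\tau)e_\epsilon^{n-1}+\tau S_\epsilon(\tau)\big(B(U_\epsilon^{n-1})-B((\bar U^{n-1})^\vee)\big)\\
&\quad+S_\epsilon(\tau)\big(G(U_\epsilon^{n-1})-G((\bar U^{n-1})^\vee)\big)\delta\mathcal{W}_{n-1}+R_\epsilon^{n-1},
\end{split}
\end{equation*}
where
\begin{equation*}
R_\epsilon^{n-1}=\big(S_\epsilon(\tau)-\bar{S}(\tau)^\vee\big)\Big[(\bar U^{n-1})^\vee+\tau B\big((\bar U^{n-1})^\vee\big)+G\big((\bar U^{n-1})^\vee\big)\delta\mathcal{W}_{n-1}\Big].
\end{equation*}
The first three \emph{propagation} terms are estimated in $L^p$ by $C\,\E[\|e_\epsilon^{n-1}\|_{\mathbb{H}_\gamma}^p]$ using the uniform bound \eqref{eq:Seps}, the Lipschitz continuity of $B$ and $G$, and the Burkholder inequality \eqref{eq:BDG} applied to the $\mathscr F_{t_{n-1}}$-frozen increment; by the inductive hypothesis they tend to $0$.

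The heart of the matter is the remainder $R_\epsilon^{n-1}$, which is where the semigroup convergence \eqref{eq:Seps-S} enters. The key structural observation is that $\bar U^{n-1}$ is \emph{independent of} $\epsilon$, so $R_\epsilon^{n-1}$ isolates all the $\epsilon$-dependence into the operator $S_\epsilon(\tau)-\bar{S}(\tau)^\vee$ acting on fixed random fields. For the deterministic-in-$\epsilon$ part I would apply \eqref{eq:Seps-S} pathwise, taking $\varphi=(\bar U^{n-1})^\vee(\omega)+\tau B((\bar U^{n-1})^\vee)(\omega)\in\mathbb{H}_\gamma$, and then pass to the $L^p$ limit by dominated convergence, with dominating function $(2C(\mathsf T))^p\|\varphi\|_{\mathbb{H}_\gamma}^p$ integrable by \eqref{eq:UnHp} and the uniform bounds \eqref{eq:Seps}, \eqref{eq:St}. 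For the stochastic part I would pull the bounded operator $S_\epsilon(\tau)-\bar{S}(\tau)^\vee$ inside the integral, use \eqref{eq:BDG} to reduce to $\E[\|(S_\epsilon(\tau)-\bar{S}(\tau)^\vee)G((\bar U^{n-1})^\vee)\|_{\mathscr{L}_2(RK,\mathbb{H}_\gamma)}^p]$, expand the Hilbert--Schmidt norm over the basis $\{\widehat{\mathfrak{u}_j\mu}\}$, invoke \eqref{eq:Seps-S} termwise, and pass to the limit by a \emph{first} dominated convergence in the index $j$ (dominated by $\|G((\bar U^{n-1})^\vee)\|_{\mathscr{L}_2(RK,\mathbb{H}_\gamma)}^2$) followed by a \emph{second} in $\omega$. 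The main obstacle is precisely this stochastic remainder: since \eqref{eq:Seps-S} provides only $\varphi$-wise, non-uniform convergence, one cannot control $R_\epsilon^{n-1}$ uniformly and must instead combine the Burkholder reduction (or, working directly with the iterated stochastic-convolution representation of $U_\epsilon^n$ and $\bar U^n$, a factorization argument as in \cite{DP14} to handle the supremum over $n$) with this double dominated convergence. Once $\E[\|R_\epsilon^{n-1}\|_{\mathbb{H}_\gamma}^p]\to0$ is in hand, the decomposition and the inductive hypothesis close the induction; the finite-sum bound on $\sup_{1\le n\le N}$ then yields \eqref{eq:Un-Un}, and the final \emph{in particular} claim follows because $\psi=(\psi^\wedge)^\vee$ forces \eqref{eq:StauG} by Lemma \ref{prop:AC}.
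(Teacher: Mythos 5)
Your proposal is correct, but it takes a genuinely different route from the paper. The paper does not induct step by step: it writes $U_\epsilon^n-(\bar U^n)^\vee$ via the global mild representation started at $t_1$, splitting it into the propagated first-step error $J_{\epsilon,n,1}=S_\epsilon(t_n-t_1)U_\epsilon^1-\bar S(t_n-t_1)^\vee(\bar U^1)^\vee$, a Lipschitz-difference stochastic convolution $J_{\epsilon,n,2}$, and a semigroup-difference stochastic convolution $J_{\epsilon,n,3}$; it then handles $\E[\sup_{1\le n\le N}\|\cdot\|^p]$ for the two convolutions by the factorization method (with $\alpha\in(0,\tfrac12)$, $p>1/\alpha$, and an auxiliary $\tau_0$-splitting near the endpoint for the term $Z_{\alpha,2}$), and closes with the discrete Gronwall inequality. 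You instead exploit that $N=\mathsf T/\tau$ is finite and fixed, so $\sup_{1\le n\le N}$ is dominated by a finite sum, and a one-step recursion plus induction suffices; this eliminates both the factorization argument and Gronwall. The ingredients you share with the paper are exactly the essential ones: the $L^p$ upgrade of Lemma \ref{prop:AC} for the base case (where \eqref{eq:StauG} enters, and only there, since for $n\ge2$ the limiting argument $(\bar U^{n-1})^\vee$ is automatically constant on level sets), the commutation identities $(f^\vee)^\wedge=f$, $(f^\vee\varphi)^\wedge=f\varphi^\wedge$, $\delta\bar{\mathcal W}_{n-1}=(\delta\mathcal W_{n-1})^\wedge$ to derive the recursion for $(\bar U^n)^\vee$, the crucial observation that $\bar U^{n-1}$ is $\epsilon$-independent so all $\epsilon$-dependence in the remainder sits in $S_\epsilon(\tau)-\bar S(\tau)^\vee$ acting on fixed random elements, and the double dominated convergence (over the basis index $j$, dominated by the Hilbert--Schmidt norm, then over $\omega$, dominated via \eqref{eq:Seps}, \eqref{eq:St} and \eqref{eq:UnHp}) to compensate for the non-uniformity of \eqref{eq:Seps-S}. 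What each approach buys: yours is shorter and more elementary, precisely because the theorem fixes $\tau$; the paper's factorization-plus-Gronwall structure mirrors the continuous-time proof of \cite[Theorem 5.2]{CF19} and keeps all constants uniform in $N$ (your induction constants compound like $C^N$ and the finite-sum bound adds a factor $N$), which is the structure one would need if the two limits $\epsilon\to0$, $N\to\infty$ were ever to be intertwined rather than taken sequentially. For the statement as given, both are complete proofs.
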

The proof of Theorem \ref{theo:asy} is postponed in Appendix \ref{Sec:A3}. Theorem \ref{theo:asy} reveals that the exponential Euler approximation \eqref{eq:EEM} is capable of preserving the asymptotical behavior \eqref{eq:u-u} of the stochastic RDA equation \eqref{eq:SPDE} as $\epsilon$ tends to $0$.

\subsection{Euler--Maruyama approximation for SPDE on graph}\label{S4.2}
In contrast to Theorem \ref{theo:asy}, by taking the standard Euler--Maruyama approximation as an example, we illustrate in this part that common numerical discretizations may fail to preserve the fast advection asymptotics of \eqref{eq:SPDE}.

  To simplify the analysis, we will discuss under Assumption \ref{Asp:zeta}. Under Assumption \ref{Asp:zeta}, a function $\varphi:\R^2\to \R$ satisfies $\varphi=(\varphi^\wedge)^{\vee}$ 
  if and only if $\varphi$ is a radial function. Indeed, if $\varphi=(\varphi^\wedge)^{\vee}=(\varphi^\wedge)\circ H$, then
 $\varphi$ is radial since $H$ is radial. Conversely, for any radial function 
 $\varphi(x)=R(|x|^2)$, we can rewrite $\varphi(x)=R( F(H(x)))$ with $F=(Id+\zeta)^{-1}$, which means that $\varphi=(\varphi^\wedge)^{\vee}$ with $\varphi^\wedge=R\circ F$.

 \begin{lem}\label{lemL-L}
 Let Assumption \ref{Asp:zeta} hold and $\varphi:\R^2\to \R$ be a second continuously differentiable radial function.
 Then 
 $(\mathcal L_\epsilon \varphi)^\wedge=\bar{\mathcal L} \varphi^\wedge$ for any $\epsilon>0$. 
 \end{lem}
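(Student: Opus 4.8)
The plan is to exploit two structural facts under Assumption \ref{Asp:zeta}: that the fast advection term annihilates radial functions, so $\mathcal{L}_\epsilon\varphi$ collapses to $\frac12\Delta\varphi$ with no $\epsilon$-dependence, and that a radial $\varphi$ is constant on the level sets of $H$, so that applying $^\wedge$ reduces to averaging against the invariant measure $\mu_z$, which is precisely what manufactures the coefficients $\alpha,\beta$ appearing in \eqref{eq:ATA}. Since Assumption \ref{Asp:zeta} forces $\Gamma=[0,\infty)$, I work on the single edge and drop the subscript $k$.

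First I would record that, by the characterization established just before the lemma, a radial $\varphi$ satisfies $\varphi=(\varphi^\wedge)^\vee=\varphi^\wedge\circ H$; concretely, writing $\varphi(x)=R(|x|^2)$ and using the inverse $F=(Id+\zeta)^{-1}$ from \eqref{eq:zeta1}, one has $\varphi^\wedge(z)=R(F(z))$ and $\varphi(x)=R(F(H(x)))$. Consequently $\nabla\varphi=(\varphi^\wedge)'(H)\,\nabla H$ is parallel to $\nabla H$, and since $\nabla^\perp H\perp\nabla H$ pointwise we get $\langle\nabla^\perp H,\nabla\varphi\rangle=0$. Hence $\mathcal{L}_\epsilon\varphi=\frac12\Delta\varphi$ for every $\epsilon>0$, which already explains the $\epsilon$-independence asserted in the statement.

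The computational core is then the chain-rule identity $\Delta\varphi=(\varphi^\wedge)''(H)\,|\nabla H|^2+(\varphi^\wedge)'(H)\,\Delta H$. Applying $^\wedge$ and using that $H\equiv z$ on $C(z)$, I would pull the factors $(\varphi^\wedge)''(z)$ and $(\varphi^\wedge)'(z)$ out of the average, obtaining
$$\Big(\tfrac12\Delta\varphi\Big)^\wedge(z)=(\varphi^\wedge)''(z)\cdot\tfrac12\oint_{C(z)}|\nabla H|^2\,\ud\mu_z+(\varphi^\wedge)'(z)\cdot\tfrac12\oint_{C(z)}\Delta H\,\ud\mu_z.$$
By the definitions of $\alpha,\beta$ and the identities \eqref{eq:ATA}, the two averages are exactly $\alpha(z)=A(z)/T(z)$ and $\beta(z)=A'(z)/T(z)$, so the right-hand side equals $\frac{A}{T}(\varphi^\wedge)''+\frac{A'}{T}(\varphi^\wedge)'=\bar{\mathcal{L}}\varphi^\wedge$, which is the claim.

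I expect the only genuine care to concern regularity and the vertex rather than the substance of the argument: I should check that $\varphi^\wedge=R\circ F$ is twice continuously differentiable on the open edge $(0,\infty)$ so that the chain rule and the interior expression \eqref{eq:barL} for $\bar{\mathcal{L}}$ are legitimate, which follows from $\varphi\in C^2$ together with $F\in C^3$ (since $F'=(1+\zeta'(F))^{-1}$ and $\zeta\in C^3$). Because the identity is asserted only on the interior of $I=(0,\infty)$, the possible singularity of the coefficients $A/T$, $A'/T$ at the vertex $z=0$ plays no role, and I anticipate this being the single point that merits a sentence of justification.
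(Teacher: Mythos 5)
Your proof is correct, but it follows a genuinely different route from the paper's. The paper argues by brute-force computation: it writes $\mathcal L_\epsilon\varphi=\frac12\Delta\varphi=2|x|^2R''(|x|^2)+2R'(|x|^2)$, computes the coefficients $\frac{A}{T}=2F(1+\zeta'(F))^2$ and $\frac{A'}{T}=2(1+\zeta'(F)+F\zeta''(F))$ explicitly from \eqref{eq:TA}, differentiates $\varphi^\wedge=R\circ F$ twice using the closed-form expressions for $F'$ and $F''$, and then checks that both $(\mathcal L_\epsilon\varphi)^\wedge(z)$ and $\bar{\mathcal L}\varphi^\wedge(z)$ reduce to the same expression $2F(z)R''(F(z))+2R'(F(z))$. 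You instead invoke the chain-rule identity $\Delta\varphi=(\varphi^\wedge)''(H)|\nabla H|^2+(\varphi^\wedge)'(H)\Delta H$, pull the level-set-constant factors out of the hat average, and identify the two resulting averages with $\alpha=A/T$ and $\beta=A'/T$ via \eqref{eq:ATA}. The trade-off: the paper's computation is self-contained in the sense that it never needs the nontrivial differentiation identity $\beta=A'/T$ (the paper gets $A'$ by differentiating the explicit formula \eqref{eq:TA}), whereas your argument outsources exactly that step to \eqref{eq:ATA}, i.e.\ to \cite[Lemma 1.1]{FW12} — which is legitimate, since the paper states \eqref{eq:ATA} as a rigorous cited identity and uses that same lemma rigorously elsewhere (in the proof of Proposition \ref{prop:uD1}). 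In return, your proof is shorter, avoids all manipulation of $F'$, $F''$ and $\zeta''$, and exposes the structural reason the lemma holds: for any function of the form $f\circ H$ the advection term is annihilated and the hat of $\frac12\Delta$ is exactly $\bar{\mathcal L}$ acting on $f$, with the radial hypothesis entering only through the equivalence $\varphi$ radial $\Leftrightarrow\varphi=(\varphi^\wedge)^\vee$ recorded before the lemma. Your closing remarks on regularity ($\varphi^\wedge=R\circ F\in C^2(0,\infty)$, with the identity asserted only at interior points so the vertex $z=0$ is immaterial) are the right points to flag and are equally implicit in the paper's own computation.
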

 \begin{proof}
 For any radial function $\varphi=R(|x|^2)$ with some $R:[0,\infty)\to\R$,
\begin{equation*}
\mathcal L_\epsilon \varphi=\frac12 \Delta \varphi=2  |x|^2R^{\prime\prime}(|x|^2)+2 R^\prime(|x|^2).
\end{equation*}
 In virtue of \eqref{eq:TA}, one has $A^\prime(z)
=
2\pi+2\pi F(z)\zeta^{\prime\prime}(F(z))F^{\prime}(z)$, and 
 \begin{gather*} 
\frac{A(z)}{T(z)}=2F(z)\big(1+\zeta^\prime(F(z))\big)^2,\quad
\frac{A^\prime(z)}{T(z)}=2\big(1+\zeta^\prime(F(z))+F(z)\zeta^{\prime\prime}(F(z))\big).
\end{gather*}
 Under Assumption \ref{Asp:zeta},
 $F^\prime(z)=\frac{1}{1+\zeta^\prime(F(z))}$ and $F^{\prime\prime}(z)=-\frac{\zeta^{\prime\prime}(F(z))}{(1+\zeta^\prime(F(z)))^3}.$
 Then direct calculations give that
 \begin{gather*}
 \varphi^{\wedge}(z)=\frac{1}{T(z)}\oint_{\{|x|=\sqrt{F(z)}\}}\frac{R(|x|^2)}{|\nabla H(x)|}\ud l_z=R(F(z)),\\
\frac{\ud }{\ud z}\varphi^\wedge(z)
=\frac{R^\prime(F(z))}{1+\zeta^\prime(F(z))},\qquad 
\frac{\ud^2 }{\ud z^2}\varphi^\wedge(z)
=\frac{R^{\prime\prime}(F(z))}{(1+\zeta^\prime(F(z)))^2}-\frac{R^{\prime}(F(z))\zeta^{\prime\prime}(F(z))}{(1+\zeta^\prime(F(z)))^3}.
\end{gather*} 
Collecting the above formulas, we arrive at
\begin{align*}
\bar{\mathcal L}\varphi^\wedge(z)=\frac{A(z)}{T(z)}\frac{\ud^2}{\ud z^2}\varphi^\wedge(z)+\frac{A^\prime(z)}{T(z)}\frac{\ud}{\ud z}\varphi^\wedge(z)=2F(z)R^{\prime\prime}(F(z))+2R^{\prime}(F(z)).
\end{align*}
By the fact that $H(x)=z$ is equivalent to $|x|^2=F(z)$, we have that $(\mathcal L_\epsilon \varphi)^\wedge=\bar{\mathcal L} \varphi^\wedge$ for radial $\varphi$. The proof is completed.
 \end{proof}

To illustrate  that
the standard Euler--Maruyama approximation cannot preserve the asymptotical behavior \eqref{eq:u-u}, throughout this part, we assume  for simplicity that $b\equiv0$, $\mu\neq \delta_0$, Assumptions \ref{Asp:gamma}-\ref{Asp:zeta} hold and the initial value $\psi=(\psi^\wedge)^{\vee}$. Here $\delta_0$ denotes the Dirac measure concentrated at $0\in\R^2$.
Then the Euler--Maruyama approximation for \eqref{eq:SPDE} reads:
\begin{equation*}
U_{\epsilon,EM}^{n}=U_{\epsilon,EM}^{n-1}+\tau\mathcal L_\epsilon U_{\epsilon,EM}^{n-1}+G(U_{\epsilon,EM}^{n-1})\delta \mathcal{W}_{n-1},\quad n=1,\ldots,N.
\end{equation*}
Similarly, the Euler--Maruyama method applied to the limiting equation \eqref{eq:utzk-p} gives
\begin{equation*}
\bar{U}_{EM}^{n}=\bar{U}_{EM}^{n-1}+\tau\bar{\mathcal L} \bar{U}_{EM}^{n-1}+G(\bar{U}_{EM}^{n-1})\delta \bar{\mathcal{W}}_{n-1},\quad n=1,\ldots,N.
\end{equation*}
For the first step (i.e., $n=1$), by Lemma \ref{lemL-L} and $(\psi^{\wedge})^{\vee}=\psi$,
\begin{align*}
\E\left[\|U_{\epsilon,EM}^{1}-(\bar U_{EM}^{1})^{\vee}\|_{\mathbb{H}_\gamma}^2\right]&=\E\left[\|G(\psi)\delta \mathcal{W}_{0}-G(\psi)(\delta \bar{\mathcal{W}}_{0})\circ H\|_{\mathbb{H}_\gamma}^2\right]\\
&=\tau\sum_{j=1}^\infty\|G(\psi)\widehat{\mathfrak{u}_j\mu}-G(\psi)((\widehat{\mathfrak{u}_j\mu})^\wedge)^{\vee}\|_{\mathbb{H}_\gamma}^2.
\end{align*}
Note that in this case, the Euler--Maruyama approximation is not divergent since $n=1.$
Claim that for $\mu\neq\delta_0$, 
\begin{equation}\label{eq:uj0mu}
\widehat{\mathfrak{u}_{j_0}\mu}\quad\text{ is not radial for at least one  }j_0\in\mathbb{N}_+.
\end{equation} 
For such non-radial $\widehat{\mathfrak{u}_{j_0}\mu}$, by the positivity of $\gamma$ in Assumption \ref{Asp:gamma}, we have
$$\|G(\psi)\widehat{\mathfrak{u}_{j_0}\mu}-G(\psi)((\widehat{\mathfrak{u}_{j_0}\mu})^\wedge)^{\vee}\|_{\mathbb{H}_\gamma}^2>0,$$
provided that  there exists a measurable set $E_1\subset\R^2$ with positive Lebesgue measure such that
$G(\psi(x))\neq0$ for $x\in E_1$.
In particular, for the additive noise (i.e., $g$ is a non-zero constant), it holds that for any fixed $\tau>0$,
\begin{align*}
\lim_{\epsilon\to 0}\E\left[\|U_{\epsilon,EM}^{1}-(\bar U_{EM}^{1})^{\vee}\|_{\mathbb{H}_\gamma}^2\right]>0.
\end{align*}
This proves that the standard Euler--Maruyama approximation for \eqref{eq:SPDE} cannot preserve the fast advection asymptotics \eqref{eq:u-u}  in general.

 To prove \eqref{eq:uj0mu},
 assume by contradiction that $x\mapsto\widehat{\mathfrak{u}_j\mu}(x)$ is radial for all $j\in\mathbb{N}_+$. 
 Since the set $\{\mathfrak{u}_j\}_{j=1}^\infty$ forms a complete orthonormal basis of $L^2_{(s)}(\R^2,\ud\mu)$, it holds that for any $x\in \R^2$,
$$
e^{-\textup{i}\langle x,\cdot\rangle}=\sum_{j=1}^\infty
    \widehat{\mathfrak{u}_j\mu}(x) \mathfrak{u}_j(\cdot)\quad \text{in }L^2(\R^2,\ud\mu).
$$
By the assumption that $x\mapsto\widehat{\mathfrak{u}_j\mu}(x)$ is radial for all $j\in\mathbb{N}_+$, we obtain that $$e^{-\textup{i}\langle (r\cos\theta_1,r\sin\theta_1),\cdot\rangle}=e^{-\textup{i}\langle (r\cos\theta_2,r\sin\theta_2),\cdot\rangle}\quad \text{in }L^2(\R^2,\ud\mu)$$
for any $r>0$ and $\theta_1,\theta_2\in[0,2\pi)$. This is equivalent to 
$$
\int_{\R^2}|e^{-\textup{i}r(\cos(\theta_1)\xi_1+\sin(\theta_1)\xi_2)}-e^{-\textup{i}r(\cos(\theta_2)\xi_1+\sin(\theta_2)\xi_2)}|^2\mu(\ud\xi)=0
$$
 for any $r>0$ and $\theta_1,\theta_2\in[0,2\pi)$.
For $r=1$, the above identity implies that
 \begin{align}\label{eq:cossin}
     &\cos(\theta_1)\xi_1+\sin(\theta_1)\xi_2-\cos(\theta_2)\xi_1-\sin(\theta_2)\xi_2\\\notag
     &=2\sin\left(\frac{\theta_1-\theta_2}{2}\right)\left[-\sin\left(\frac{\theta_1+\theta_2}{2}\right)\xi_1+\cos\left(\frac{\theta_1+\theta_2}{2}\right)\xi_2\right]\in2\pi\mathbb{Z}
 \end{align}
  for any $\xi=(\xi_1,\xi_2)\in\text{supp}(\mu)$ and 
all $\theta_1,\theta_2\in[0,2\pi)$. By the arbitrariness of $\theta_1,\theta_2\in[0,2\pi)$,
 \eqref{eq:cossin} holds if and only if $\xi_1=\xi_2=0$, which leads to a contradiction with $\text{supp}(\mu)\neq\{0\}$ and thus proves the  claim \eqref{eq:uj0mu}.

\section{Asymptotic-preserving property}\label{S5}
In this section, we study the AP property of the exponential Euler approximation  \eqref{eq:EEM} for \eqref{eq:SPDE}.  Since
$\lim_{\epsilon\to 0}U_\epsilon^N=\bar{U}^N\circ\Pi$ (see Theorem \ref{theo:asy}), where $\bar{U}^N$ is generated via  \eqref{eq:Un}, verifying the AP property of the exponential Euler approximation \eqref{eq:EEM} now boils down to analyzing the convergence of the exponential Euler approximation  \eqref{eq:Un} to the limiting equation \eqref{eq:utzk}. To reduce the complexity of the singularity near the vertice $\{O_k\}_{k=1}^{m_1}$, we restrict ourselves to
a special case that the graph $\Gamma=[0,\infty)$ with only two vertices, or 
equivalently, $H$ admits a unique critical point $0$. 
In particular, we will work under Assumption \ref{Asp:zeta} in this section.

\subsection{Regularity of semigroup on graph}
We begin with some useful estimates of  the functions $A$ and $T$,
which will be used in the regularity estimate of the semigroup $\{\bar{S}(t)\}_{t\ge0}$ generated by $\bar{\mathcal L}$.

\begin{lem}\label{lem:TFA}
Under Assumption \ref{Asp:zeta}, 
for any $z\in[0,\infty)$,
\begin{gather}\label{eq:T}
\frac{\pi}{1+\tilde{r}_0}\le T(z)\le\frac{\pi}{1-r_0},\\\label{eq:A}
 2\pi \frac{1-r_0}{1+\tilde{r}_0}z\le A(z)\le2\pi \frac{1+\tilde{r}_0}{1-r_0}z.
\end{gather}
\end{lem}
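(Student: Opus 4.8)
The plan is to work directly from the explicit formulas for $T$ and $A$ recorded in \eqref{eq:TA}, namely $T(z)=\pi/(1+\zeta^\prime(F(z)))$ and $A(z)=2\pi F(z)(1+\zeta^\prime(F(z)))$, where $F=(Id+\zeta)^{-1}$. The bound \eqref{eq:T} on $T$ is then immediate: the hypothesis \eqref{eq:zeta1} gives $-r_0\le\zeta^\prime(w)\le\tilde r_0$ for every $w\in[0,\infty)$, and in particular for $w=F(z)$, so the denominator $1+\zeta^\prime(F(z))$ lies in $[1-r_0,\,1+\tilde r_0]$, both endpoints being strictly positive since $r_0<1$. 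Taking reciprocals yields \eqref{eq:T} at once.

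For $A$ I first need two-sided linear control of $F(z)$ by $z$. The key observation is that $w:=F(z)\ge0$ is characterized by $w+\zeta(w)=z$ together with $F(0)=0$, which follows from $\zeta(0)=0$. Writing $\zeta(w)=\int_0^w\zeta^\prime(s)\,\ud s$ and using \eqref{eq:zeta1} gives $\zeta(w)\in[-r_0w,\,\tilde r_0 w]$, whence $z=w+\zeta(w)\in[(1-r_0)w,\,(1+\tilde r_0)w]$. Inverting these two inequalities produces the sandwich $\frac{z}{1+\tilde r_0}\le F(z)\le\frac{z}{1-r_0}$.

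Finally I combine the two ingredients in the product $A(z)=2\pi F(z)(1+\zeta^\prime(F(z)))$. Since both factors are positive, pairing the lower bound $\frac{z}{1+\tilde r_0}$ on $F(z)$ with the lower bound $1-r_0$ on $1+\zeta^\prime(F(z))$ gives the lower estimate, and pairing the respective upper bounds gives the upper estimate, yielding $2\pi\frac{1-r_0}{1+\tilde r_0}z\le A(z)\le2\pi\frac{1+\tilde r_0}{1-r_0}z$, which is exactly \eqref{eq:A}. There is no genuine obstacle in this argument; the only point requiring care is keeping the directions of the inequalities consistent when inverting the relation between $z$ and $F(z)$ and when multiplying the two positive factors, so that the smallest lower bounds and the largest upper bounds are matched correctly. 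Note also that only \eqref{eq:zeta1} of Assumption \ref{Asp:zeta} is actually used here.
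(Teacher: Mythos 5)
Your proof is correct and follows essentially the same route as the paper: both read off \eqref{eq:T} directly from \eqref{eq:TA} and \eqref{eq:zeta1}, establish the sandwich $\frac{z}{1+\tilde r_0}\le F(z)\le\frac{z}{1-r_0}$ from the integral bound $\zeta(w)\in[-r_0w,\tilde r_0w]$, and then multiply the two-sided bounds in $A(z)=2\pi F(z)(1+\zeta^\prime(F(z)))$. The only (immaterial) difference is that you invert the inequality $(1-r_0)w\le z\le(1+\tilde r_0)w$ at $w=F(z)$ algebraically, whereas the paper applies $F$ to the corresponding bounds on $Id+\zeta$ and invokes the monotonicity of $F$ via $F^\prime>0$.
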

\begin{proof}
The first assertion \eqref{eq:T} comes from \eqref{eq:zeta1} and \eqref{eq:TA}.
By \eqref{eq:zeta1} and $\zeta(0)=0$,
\begin{gather*}
(1-r_0)z\le (Id+\zeta)(z)=z+\zeta(0)+\int_0^z\zeta^\prime(y)\ud y\le (1+\tilde{r}_0)z\quad \forall~z\in[0,\infty).
\end{gather*}
Therefore, $(Id+\zeta)((1-r_0)^{-1}z)\ge z$ and $(Id+\zeta)((1+\tilde{r}_0)^{-1}z)\le z$.
Since
\begin{align}\label{eq:F'}
\frac{1}{1-r_0}\ge F^\prime(z)=\frac{1}{1+\zeta^\prime(F(z))}\ge \frac{1}{1+\tilde{r}_0}>0,\quad z\in[0,\infty),
\end{align} 
$F$ is monotone increasing on $[0,\infty)$. Hence, 
$F(z)\le F((Id+\zeta)((1-r_0)^{-1}z))\le (1-r_0)^{-1}z$ and $F(z)\ge F((Id+\zeta)((1+\tilde{r}_0)^{-1}z))\ge (1+\tilde{r}_0)^{-1}z$ for all $z\in[0,\infty)$, namely, 
\begin{equation}\label{eq:F}
 \frac{1}{1+\tilde{r}_0}z\le F(z)\le \frac{1}{1-r_0}z.
\end{equation}
 Finally, the second inequality \eqref{eq:A} follows from \eqref{eq:F}, \eqref{eq:TA} and \eqref{eq:zeta1}.
\end{proof}

\begin{lem}\label{lem:AT1}
Under Assumption \ref{Asp:zeta}, for any $z\in[0,\infty)$,
\begin{gather*}
|T^\prime(z)|+|A^\prime(z)|\le C,\\\notag
|A^{\prime\prime}(z)|+|A(z)T^\prime(z)|\le C.
\end{gather*}
\end{lem}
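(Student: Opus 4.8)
The plan is to reduce each of the four quantities to an explicit rational expression in $F(z)$, $\zeta'(F(z))$, $\zeta''(F(z))$ and $\zeta'''(F(z))$, and then to bound every factor by means of Assumption \ref{Asp:zeta} and Lemma \ref{lem:TFA}. The one ingredient on which everything rests is the uniform bound
\begin{equation*}
\sup_{w\in[0,\infty)}|w\zeta''(w)|\le C.
\end{equation*}
To establish it, I would let $W_0$ be a threshold beyond which \eqref{eq:zeta2} holds: for $w>W_0$ one has $|w\zeta''(w)|\le r_1$, whereas for $0\le w\le W_0$ the bound \eqref{eq:zeta3} gives $|w\zeta''(w)|\le W_0 r_2$; hence $|w\zeta''(w)|\le\max\{r_1,W_0 r_2\}$. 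Since $F$ maps $[0,\infty)$ onto $[0,\infty)$ by \eqref{eq:F}, this yields $|F(z)\zeta''(F(z))|\le C$ for all $z\ge0$.

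The bounds on $T'$ and $A'$ then follow directly. From \eqref{eq:T'} one has $|T'(z)|=\pi|\zeta''(F(z))|/(1+\zeta'(F(z)))^3\le\pi r_2/(1-r_0)^3$, using $|\zeta''|\le r_2$ from \eqref{eq:zeta3} and $1+\zeta'(F(z))\ge1-r_0$ from \eqref{eq:zeta1}. For $A'$ I would start from the identity $A'(z)=2\pi+2\pi F(z)\zeta''(F(z))F'(z)$ derived in the proof of Lemma \ref{lemL-L}; substituting $F'(z)=(1+\zeta'(F(z)))^{-1}$ and invoking the key bound on $F(z)\zeta''(F(z))$ together with $1+\zeta'(F(z))\ge1-r_0$ gives $|A'(z)|\le2\pi+2\pi C/(1-r_0)\le C$.

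For $A''$ I would differentiate $A'(z)=2\pi+2\pi F(z)\zeta''(F(z))/(1+\zeta'(F(z)))$ once more, using $F'(z)=(1+\zeta'(F(z)))^{-1}$ throughout; writing $F=F(z)$ this produces
\begin{equation*}
A''(z)=2\pi\,\frac{(\zeta''(F)+F\zeta'''(F))(1+\zeta'(F))-F(\zeta''(F))^2}{(1+\zeta'(F))^3}.
\end{equation*}
Each term of the numerator is controlled by \eqref{eq:zeta3} and the key estimate, noting in particular that $F(\zeta''(F))^2=(F\zeta''(F))\,\zeta''(F)$, while the denominator is bounded below by $(1-r_0)^3$, so $|A''(z)|\le C$. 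Finally, combining \eqref{eq:TA} and \eqref{eq:T'} produces the cancellation
\begin{equation*}
A(z)T'(z)=-2\pi^2\,\frac{F(z)\zeta''(F(z))}{(1+\zeta'(F(z)))^2},
\end{equation*}
which is again bounded by the key estimate. The main subtlety is precisely here: $A(z)$ grows linearly in $z$ by \eqref{eq:A} and $T'$ exhibits no decay on its own, so the boundedness of both $A'$ and $AT'$ is not automatic and rests essentially on the decay of $\zeta''$ at infinity encoded in \eqref{eq:zeta2}, which is exactly what turns $F(z)\zeta''(F(z))$ into a bounded quantity.
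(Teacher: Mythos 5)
Your proof is correct and takes essentially the same route as the paper's: the identical explicit formulas for $T'$, $A'$, $A''$ and $A T'$ in terms of $F$, $\zeta'(F)$, $\zeta''(F)$, $\zeta'''(F)$, each bounded via \eqref{eq:zeta1}--\eqref{eq:zeta3} and \eqref{eq:F'}. The only (welcome) difference is that you spell out explicitly how \eqref{eq:zeta2}, which holds only for large arguments, combines with \eqref{eq:zeta3} on the remaining compact region to give the uniform bound on $w\zeta''(w)$ — a step the paper's proof leaves implicit when it cites \eqref{eq:zeta2} for the boundedness of $A'$, $A''$ and $AT'$.
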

\begin{proof}
By \eqref{eq:T'}, \eqref{eq:zeta1} and the first inequality of \eqref{eq:zeta3}, we have $|T^\prime(z)|\le C$. Since
$A^\prime(z)
=
2\pi+2\pi F(z)\zeta^{\prime\prime}(F(z))F^{\prime}(z)$,
from \eqref{eq:zeta2} and \eqref{eq:F'}
we can obtain that $|A^\prime(z)|\le C$.
The relations $T(z)=\pi F^\prime(z)$ and $|T^\prime(z)|\le C$ imply that $|F^{\prime\prime}(z)|$ is uniformly bounded for all $z\in[0,\infty)$.
Furthermore, by \eqref{eq:zeta2}, \eqref{eq:zeta3}, \eqref{eq:F'}, and
\begin{align*}
A^{\prime\prime}(z)
&=2\pi \left(\zeta^{\prime\prime}(F(z))|F^{\prime}(z)|^2+F(z)\zeta^{\prime\prime\prime}(F(z))|F^{\prime}(z)|^2+F(z)\zeta^{\prime\prime}(F(z))F^{\prime\prime}(z)\right),
\end{align*}
we obtain that $|A^{\prime\prime}|$ is uniformly bounded on $[0,\infty)$. Using
\eqref{eq:zeta2}, \eqref{eq:T} and $$A(z)T^\prime(z)=-2\pi^2 \frac{F(z)\zeta^{\prime\prime}(F(z)) }{(1+\zeta^\prime(F(z)))^2}=-2 F(z)\zeta^{\prime\prime}(F(z)) |T(z)|^2,$$
one can see that $|A(z)T^\prime(z)|$ is also uniformly bounded. 
\end{proof}

Given an integrable function $\tilde{\varrho}:[0,\infty)\to [0,\infty)$, we denote
$\bar{L}^2_{\tilde{\varrho}}:=L^2(0,\infty;\tilde{\varrho}(z)\ud z)$ endowed with the norm 
$\|f\|_{\bar{L}^2_{\tilde{\varrho}}}:=(\int_0^\infty |f(z)|^2\tilde{\varrho}(z)\ud z)^{\frac12}$. Under Assumptions \ref{Asp:gamma}-\ref{Asp:zeta}, the space $\bar{\mathbb{H}}_\gamma$ defined in \eqref{eq:H1} coincides with $\bar{L}^2_{\gamma T}$, where $\gamma=\vartheta$. 
 The spaces
$\bar{L}^2_{\tilde{\varrho} T}$ and $\bar{L}^2_{\tilde{\varrho}}$ are equivalent
due to \eqref{eq:T}. 
Next, we present the 
 a priori estimate of the solution $\bar u$ to \eqref{eq:utzk-p} in graph weighted $L^2$-spaces.

\begin{lem}\label{lem:Sf1}
Under Assumptions \ref{Asp:gamma}-\ref{Asp:zeta}, we have the following results.
 \begin{enumerate}
 \item[(i)] For any $f\in\bar{L}^2_{\gamma T}$,
\begin{gather*}
 \|\bar{S}(t)f\|^2_{\bar{L}^2_{\gamma T}}+2\int_0^t\|\partial_{z}\bar{S}(s)f\|_{\bar{L}^2_{A\gamma}}^2\ud s\lesssim\|f\|_{\bar{L}^2_{\gamma T}}^2\quad\forall~ t\in[0,\mathsf T].
\end{gather*}
\item[(ii)]If
 $\psi^{\wedge}\in \bar{L}^2_{\gamma T}$, then 
 \begin{align*}
\E\left[\|\bar{u}(t)\|^2_{\bar{L}^2_{\gamma T}}\right]+2\E\int_0^t\|\partial_z\bar{u}(r)\|^2_{\bar{L}_{A\gamma}^2}\ud r\lesssim \|\psi^{\wedge}\|^2_{\bar{L}^2_{\gamma T}}+1\quad\forall~ t\in[0,\mathsf T].
\end{align*} 
\end{enumerate}
\end{lem}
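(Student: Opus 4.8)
The plan is to exploit the divergence form of the generator. On the single edge $\Gamma=[0,\infty)$ the operator in \eqref{eq:barL} reads $\bar{\mathcal L}f=\frac{1}{T(z)}\frac{\ud}{\ud z}\big(A\frac{\ud f}{\ud z}\big)$, so that the weight $T$ appearing in the inner product of $\bar{L}^2_{\gamma T}$ is precisely the one that makes $\bar{\mathcal L}$ symmetric up to the factor $\gamma=\vartheta$. Writing $U(t):=\bar S(t)f$, I would differentiate $\|U(t)\|^2_{\bar{L}^2_{\gamma T}}=\int_0^\infty |U(t,z)|^2\gamma(z)T(z)\,\ud z$ in time and insert $\partial_tU=\bar{\mathcal L}U$; the factor $T$ cancels, leaving $\frac{\ud}{\ud t}\|U\|^2_{\bar{L}^2_{\gamma T}}=2\int_0^\infty U\,\frac{\ud}{\ud z}(A\,\partial_z U)\,\gamma\,\ud z$. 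Integrating by parts once yields the dissipative term $-2\|\partial_zU\|^2_{\bar{L}^2_{A\gamma}}$ together with $-2\int_0^\infty U\,\partial_zU\,A\,\gamma'\,\ud z$; writing $U\partial_zU=\tfrac12\partial_z(U^2)$ and integrating by parts a second time converts the latter into $\int_0^\infty U^2(A'\gamma'+A\gamma'')\,\ud z$. The crucial algebraic estimate is $|A'\gamma'+A\gamma''|\le C\gamma\le C\gamma T$: indeed $|A'|\le C$ by Lemma \ref{lem:AT1} and $|\gamma'|=|\vartheta'|\le C\vartheta=C\gamma$ by Assumption \ref{Asp:gamma}, while $A(z)\lesssim z$ by Lemma \ref{lem:TFA} and $|z\gamma''|=|z\vartheta''|\le C\gamma$ again by Assumption \ref{Asp:gamma}, and $T$ is bounded below by \eqref{eq:T}. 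This produces $\frac{\ud}{\ud t}\|U\|^2_{\bar{L}^2_{\gamma T}}+2\|\partial_zU\|^2_{\bar{L}^2_{A\gamma}}\le C\|U\|^2_{\bar{L}^2_{\gamma T}}$, whence (i) follows by Gronwall's inequality followed by a time integration over $[0,\mathsf T]$.

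For part (ii), I would apply the It\^o formula to $\|\bar u(t)\|^2_{\bar{L}^2_{\gamma T}}$ along the solution of \eqref{eq:utzk-p}. The $\bar{\mathcal L}$-term is treated exactly as above, contributing $-2\|\partial_z\bar u\|^2_{\bar{L}^2_{A\gamma}}$ plus a term bounded by $C\|\bar u\|^2_{\bar{L}^2_{\gamma T}}$. The drift contribution $2\langle\bar u,B(\bar u)\rangle_{\bar{L}^2_{\gamma T}}$ is controlled by $C(1+\|\bar u\|^2_{\bar{L}^2_{\gamma T}})$ using the linear growth of $b$, the It\^o correction $\|G(\bar u)\|^2_{\mathscr L_2(\overline{RK},\bar{L}^2_{\gamma T})}$ by $C(1+\|\bar u\|^2_{\bar{L}^2_{\gamma T}})$ via \eqref{eq:barGlin-growth}, while the stochastic integral is a martingale that vanishes in expectation. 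Taking expectations gives $\E\|\bar u(t)\|^2_{\bar{L}^2_{\gamma T}}+2\E\int_0^t\|\partial_z\bar u(r)\|^2_{\bar{L}^2_{A\gamma}}\,\ud r\le \|\psi^\wedge\|^2_{\bar{L}^2_{\gamma T}}+C\int_0^t(1+\E\|\bar u(r)\|^2_{\bar{L}^2_{\gamma T}})\,\ud r$, and the assertion follows by inserting the uniform second moment bound $\sup_{r\in[0,\mathsf T]}\E\|\bar u(r)\|^2_{\bar{\mathbb H}_\gamma}\le C$ recorded in Section \ref{S:SPDE}, recalling $\bar{\mathbb H}_\gamma=\bar{L}^2_{\gamma T}$.

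The main obstacle is the rigorous justification of the two integrations by parts at the degenerate vertex $z=0$, where $A(0)=0$ renders $\bar{\mathcal L}$ non-uniformly elliptic, and at $z=\infty$. The boundary contributions are $U\,A\,\partial_zU\,\gamma$ and $U^2A\gamma'$ evaluated at the endpoints; at $z=0$ they vanish because $A(z)\sim z\to0$ (Lemma \ref{lem:TFA}), provided $U$ and $\partial_zU$ remain controlled there, and at $z=\infty$ they vanish thanks to the integrability forced by the weight $\gamma$. To make this fully rigorous I would first establish the identity for smooth functions with adequate decay (e.g.\ for $f$ in a core of the generator, using the smoothing of $\bar S(t)f$ for $t>0$), then extend to general $f\in\bar{L}^2_{\gamma T}$ by density and to $\bar u$ through the approximation underlying the Hilbert-space It\^o formula \cite[Theorem 4.32]{DP14}; a cutoff in $z$ near $0$ and near $\infty$, combined with the bounds in Lemmas \ref{lem:TFA}--\ref{lem:AT1}, controls the resulting error terms uniformly.
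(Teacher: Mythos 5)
Your proof of part (i) is essentially the paper's proof: the same energy identity $\frac{\ud}{\ud t}\|\bar S(t)f\|^2_{\bar{L}^2_{\gamma T}}=2\int_0^\infty \bar S(t)f\,\bar{\mathcal L}\bar S(t)f\,\gamma T\ud z$ (the factor $T$ cancelling), the same two integrations by parts producing $-2\|\partial_z\bar S(t)f\|^2_{\bar{L}^2_{A\gamma}}$ and $\int_0^\infty|\bar S(t)f|^2(A'\gamma'+A\gamma'')\ud z$, the same key bound $|A'\gamma'+A\gamma''|\lesssim\gamma\lesssim\gamma T$ drawn from Lemmas \ref{lem:TFA}--\ref{lem:AT1} and Assumption \ref{Asp:gamma}, and Gronwall; you are in fact more explicit than the paper about the boundary terms at the degenerate vertex $z=0$ and at infinity, which the paper passes over silently. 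Part (ii) is where you genuinely diverge: the paper does \emph{not} apply the It\^o formula there, but instead uses the mild formulation \eqref{eq:ut}, the uniform bound $\|\bar S(t)\|_{\mathscr L(\bar L^2_{\gamma T})}\le C$ inherited from part (i), the linear growth of $b,g$, and Gronwall. Your It\^o/energy route has a concrete advantage: it delivers the dissipation term $2\,\E\int_0^t\|\partial_z\bar u(r)\|^2_{\bar L^2_{A\gamma}}\ud r$ directly, whereas the paper's mild-solution argument only explicitly bounds $\E\|\bar u(t)\|^2_{\bar L^2_{\gamma T}}$ and leaves the gradient term implicit. The price you pay is the technical justification of It\^o's formula for $\|\bar u(t)\|^2_{\bar L^2_{\gamma T}}$ applied to a mild solution (domain issues for the degenerate operator $\bar{\mathcal L}$), which you correctly flag and propose to handle by approximation; note that this is consistent with the paper's own methodology, since the analogous 2D estimate in Lemma \ref{lem:nablau} is proved exactly by an It\^o-formula energy argument citing \cite[Theorem 4.32]{DP14}. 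One small refinement: rather than ``inserting'' the a priori moment bound from section \ref{S:SPDE} (whose constant's dependence on $\psi^\wedge$ is not displayed), close the estimate with Gronwall applied to your own displayed inequality, which yields $\E\|\bar u(t)\|^2_{\bar L^2_{\gamma T}}\lesssim 1+\|\psi^\wedge\|^2_{\bar L^2_{\gamma T}}$ with the correct dependence, and then substitute back to bound the gradient integral.
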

\begin{proof}

Note that $\bar U_{f}(t):=\bar{S}(t)f$ satisfies that $\bar U_{f}(0,z)=f(z)$, and
\begin{equation}\label{eq:partbUf}
\partial_t\bar U_{f}(t,z)=\bar{\mathcal L}\bar U_{f}(t,z)=\frac{A(z)}{T(z)}\partial_{zz}\bar U_{f}(t,z)+\frac{A^\prime(z)}{T(z)}\partial_z\bar U_{f}(t,z),\quad t\in[0,\mathsf T].
\end{equation}
Due to Assumption \ref{Asp:gamma}, Lemma \ref{lem:AT1}, and Lemma \ref{lem:TFA}, we have that
$$|A(z)\gamma^{\prime\prime}(z)|+|A^\prime(z)\gamma^\prime(z)|\lesssim \gamma(z)\lesssim \gamma(z)T(z),\quad z\in[0,\infty).$$
Multiplying $2\bar U_{f}(t,z)$ and integrating with respect to $\gamma (z)T(z)\ud z$ on both sides of \eqref{eq:partbUf}, and then using the integration by parts formula, we obtain
\begin{align*}
&\partial_t\int_0^\infty|\bar U_{f}(t,z)|^2\gamma(z)T(z)\ud z=2\int_0^\infty \bar U_{f}(t,z)\bar{\mathcal L}\bar U_{f}(t,z)\gamma(z)T(z)\ud z\\
&=\int_0^\infty A(z)\left[\partial_{zz}|\bar U_{f}(t,z)|^2-2|\partial_z\bar U_{f}(t,z)|^2\right]\gamma(z)\ud z+\int_0^\infty A^\prime(z)\partial_z|\bar U_{f}(t,z)|^2\gamma(z)\ud z\\
&=-2\int_0^\infty A(z)|\partial_z\bar U_{f}(t,z)|^2\gamma(z)\ud z-\int_0^\infty A(z)\partial_z|\bar U_{f}(t,z)|^2\gamma^\prime(z)\ud z\\
&=-2\int_0^\infty A(z)|\partial_z\bar U_{f}(t,z)|^2\gamma(z)\ud z+\int_0^\infty |\bar U_{f}(t,z)|^2(A(z)\gamma^{\prime\prime}(z)+A^\prime(z)\gamma^\prime(z))\ud z\\ 
&\le-2\int_0^\infty A(z)|\partial_z\bar U_{f}(t,z)|^2\gamma(z)\ud z+C\int_0^\infty |\bar U_{f}(t,z)|^2\gamma(z)T(z)\ud z.
\end{align*}
Then using the Gronwall inequality yields the first assertion $(i)$.

By the first assertion $(i)$, $\|\bar{S}(t) \|_{\mathscr{L}(\bar{L}^2_{\gamma T})}\le C$ 
 for any $t\in[0,\mathsf T]$. Hence,
it follows from \eqref{eq:ut}, the H\"older inequality and an argument similar to \eqref{eq:barGlin-growth} 
that
\begin{align*}
\E\left[\|\bar{u}(t)\|^2_{\bar{L}^2_{\gamma T}}\right]&\lesssim \|\bar{S}(t) \psi^{\wedge}\|^2_{\bar{L}^2_{\gamma T}}+\int_0^t \E\left[\|\bar{S}(t-s) B(\bar{u}(s))\|^2_{\bar{L}^2_{\gamma T}} \right]\ud s\\
&\quad+\int_0^t\sum_{j=1}^\infty\E\left[\| \bar{S}(t-s) G(\bar{u}(s))(\widehat{\mathfrak{u}_j\mu})^{\wedge} \|^2_{\bar{L}^2_{\gamma T}}\right]\ud s\\
&\lesssim\|\psi^{\wedge}\|^2_{\bar{L}^2_{\gamma T}}+\int_0^t \E\left[\|B(\bar{u}(s))\|^2_{\bar{L}^2_{\gamma T}} \right]\ud s
+\int_0^t\sum_{j=1}^\infty\E\left[\| G(\bar{u}(s))(\widehat{\mathfrak{u}_j\mu})^{\wedge} \|^2_{\bar{L}^2_{\gamma T}}\right]\ud s\\
&\lesssim \|\psi^{\wedge}\|^2_{\bar{L}^2_{\gamma T}} +1+\int_0^t \E\left[\|\bar{u}(s)\|^2_{\bar{L}^2_{\gamma T}} \right]\ud s,
\end{align*}
in virtue of the linear growth of $b$ and $g$. Finally
using the Gronwall inequality leads to the second assertion $(ii)$.
\end{proof}

Under Assumptions \ref{Asp:zeta}-\ref{Asp:gamma-S},
Lemma \ref{lem:Sf1} also holds for $\gamma$ replaced by $\sqrt\gamma$.  In particular, for any $t\in[0,\mathsf T]$,
 \begin{align}\label{eq:utHgamma}
\E\left[\|\bar{u}(t)\|^2_{\bar{\mathbb{H}}_{\gamma}}\right]\le C\left( \|\psi^{\wedge}\|^2_{\bar{\mathbb{H}}_{\gamma}}+1\right),\qquad
\E\left[\|\bar{u}(t)\|^2_{\bar{\mathbb{H}}_{\sqrt\gamma}}\right]\le C\left( \|\psi^{\wedge}\|^2_{\bar{\mathbb{H}}_{\sqrt\gamma}}+1\right).
\end{align}

\begin{rem}\label{rem:St-I} Given $f\in \bar{\mathbb{H}}_\gamma$, the triangle inequality indicates \begin{align*}
\|(\bar{S}(t)-I)f\|_{\bar{\mathbb{H}}_\gamma}\le \|(\bar{S}(t)-S_\epsilon(t)^\wedge)f\|_{\bar{\mathbb{H}}_\gamma}+\|(S_\epsilon(t)^\wedge-I)f\|_{\bar{\mathbb{H}}_\gamma}.
\end{align*}
By \eqref{eq:Seps-S} and $(f^\vee)^\wedge=f$, for any $t>0$, 
$$\|(\bar{S}(t)-S_\epsilon(t)^\wedge)f\|_{\bar{\mathbb{H}}_\gamma}=\|\bar{S}(t)f-(S_\epsilon(t) f^\vee)^\wedge\|_{\bar{\mathbb{H}}_\gamma}\to 0\quad \text{as }\epsilon\to 0.$$
On the other hand, in view of \eqref{fvarphi} and Proposition \ref{prop:S-I-1},
\begin{align*}
\|(S_\epsilon(t)^\wedge-I)f\|_{\bar{\mathbb{H}}_\gamma}
&\le \|(S_\epsilon(t)-I)f^\vee\|_{\mathbb{H}_\gamma}\\
& \lesssim t^{\frac12}\left(\|\nabla (f^\vee)\|_{\mathbb{H}_\gamma}+\epsilon^{-\frac12}\|f^\vee\|_{\mathbb{H}_\gamma}+\epsilon^{-1} \|f^\vee\|_{\mathbb{H}_{\sqrt\gamma}}\right),
\end{align*}
{where the  right-hand side tends to $\infty$ as $\epsilon$ vanishes}. 
\end{rem}

With the observation in Remark \ref{rem:St-I}, instead of considering the space $\bar{\mathbb{H}}_\gamma=\bar{L}^2_{\gamma T}$, we choose to deal with the difference between $\bar{S}(t)$ and the identity operator $I$ in another suitable graph weighted $L^2$-space, in which the space $\bar{L}_{A\gamma}^2$ is a candidate.
By \eqref{eq:partbUf},
the Minkowski and H\"older inequalities, 
it holds that for any $t\in(0,\mathsf T]$,
\begin{align}\label{eq:S-I-0}\notag
&\|(\bar{S}(t)-I)f\|_{\bar{L}_{A\gamma}^2}=\Big\|\int_0^t\partial_s\bar{S}(s)f\ud s\Big\|_{\bar{L}_{A\gamma}^2}\le\int_0^t\left\|\mathcal{\bar{\mathcal L}}\bar{S}(s)f\right\|_{\bar{L}_{A\gamma}^2}\ud s\\\notag
&\quad\le t^{\frac12}\left(\int_0^t\int_0^\infty\Big|\frac{A(z)}{T(z)}\partial_{zz} \bar{S}(s)f(z)+\frac{A^\prime(z)}{T(z)}\partial_z\bar{S}(s)f(z)\Big|^2\gamma(z)A(z)\ud z\ud s\right)^{\frac12}\\\notag
&\quad\lesssim t^{\frac12}\left(\int_0^t\int_0^\infty\Big|\frac{A}{T}\partial_{zz}\bar{S}(s)f\Big|^2\gamma A\ud z\ud s\right)^{\frac12}+t^{\frac12}\left(\int_0^t\int_0^\infty\frac{|A^\prime|^2}{T^2}|\partial_z\bar{S}(s)f|^2\gamma A
\ud z\ud s\right)^{\frac12}\\
&\quad\lesssim t^{\frac12}\left(\int_0^t\int_0^\infty\frac{A^3}{T}|\partial_{zz}\bar{S}(s)f|^2\gamma\ud z\ud s\right)^{\frac12}+t^{\frac12}\|f\|_{\bar{\mathbb{H}}_\gamma},
\end{align}
where the last line is due to Lemma \ref{lem:Sf1}$(i)$, the boundedness of $|A^\prime(z)|$ in Lemma \ref{lem:AT1} and the lower bound of $T(z)$ in \eqref{eq:T}.
To proceed, we need the following gradient estimate of the semigroup $\bar{S}(\cdot)$ under the $L^\infty(0,\mathsf T;\bar{L}_{A^2\gamma}^2)$-norm, in which the linear part $\frac{A(z)}{T(z)}\frac{\ud ^2}{\ud z^2}$ in $\bar{\mathcal L}$ can lead to an upper bound for the $L^2(0,\mathsf T;\bar{L}_{A^3\gamma/T}^2)$-norm of $\partial_{zz}\bar{S}(\cdot)$.

\begin{lem}\label{lem:partzbUf}
Let Assumptions \ref{Asp:gamma}-\ref{Asp:zeta}  hold, 
$f\in \bar{\mathbb{H}}_\gamma$ and $f^\prime\in \bar{L}_{A^2\gamma}^2$. Then for $t\in[0,\mathsf T]$,
\begin{gather*}
 \|\partial_z\bar{S}(t)f\|_{\bar{L}_{A^2\gamma}^2}^2+\int_0^t\int_0^\infty\frac{A^3}{T}|\partial_{zz}\bar{S}(s)f|^2\gamma\ud z\ud s
 \lesssim\|f^\prime\|_{\bar{L}_{A^2\gamma}^2}^2+\|f\|^2_{\bar{\mathbb{H}}_\gamma}.
\end{gather*}
\end{lem}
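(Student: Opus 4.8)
The plan is to run a weighted energy estimate on $\bar U_f(t):=\bar S(t)f$, which solves \eqref{eq:partbUf}, mirroring the computation in the proof of Lemma \ref{lem:Sf1}$(i)$ but now with the heavier weight $A^2\gamma$ and differentiating once in $z$. Since $\partial_t\bar U_f=\bar{\mathcal L}\bar U_f$, differentiating $\tfrac12\|\partial_z\bar U_f(t)\|_{\bar L^2_{A^2\gamma}}^2$ in time gives
\begin{align*}
\frac{\ud}{\ud t}\frac12\int_0^\infty|\partial_z\bar U_f|^2A^2\gamma\,\ud z=\int_0^\infty\partial_z\bar U_f\,\partial_z\big(\bar{\mathcal L}\bar U_f\big)A^2\gamma\,\ud z.
\end{align*}
First I would integrate by parts in $z$ and insert $\bar{\mathcal L}\bar U_f=\frac{A}{T}\partial_{zz}\bar U_f+\frac{A'}{T}\partial_z\bar U_f$; this isolates the good dissipative term $-\int_0^\infty\frac{A^3\gamma}{T}|\partial_{zz}\bar U_f|^2\,\ud z$ together with cross terms proportional to $\partial_z\bar U_f\,\partial_{zz}\bar U_f$ and a purely zeroth-order term in $|\partial_z\bar U_f|^2$. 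Care is needed that the boundary contributions at $z=0$ and $z=\infty$ vanish: the degeneracy $A(0)=0$ of the weights (Lemma \ref{lem:TFA}) kills the term at the vertex, while decay handles $z=\infty$, to be made rigorous by a density/approximation argument in $f$.

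The next step is to remove every surviving factor of $\partial_{zz}\bar U_f$ from the error by a further integration by parts, writing $\partial_z\bar U_f\,\partial_{zz}\bar U_f=\tfrac12\partial_z(|\partial_z\bar U_f|^2)$ and transferring the derivative onto the coefficients. This reduces the right-hand side to $-\int\frac{A^3\gamma}{T}|\partial_{zz}\bar U_f|^2\,\ud z+\int\mathcal C(z)|\partial_z\bar U_f|^2\,\ud z$, where $\mathcal C$ is an explicit combination of $A,A',A'',T,T',\gamma,\gamma',\gamma''$ and $1/T$. The crux is to show $\mathcal C\lesssim A^2\gamma+A\gamma$. Here the estimates of Lemmas \ref{lem:TFA} and \ref{lem:AT1} enter decisively: I would use $A\sim z$, $c\le T\le C$, $|A'|+|A''|\le C$, and Assumption \ref{Asp:gamma} in the forms $|\gamma'|\le C\gamma$, $|z\gamma''|\le C\gamma$ to bound the top-order pieces by $A^2\gamma$, and crucially the bound $|AT'|\le C$ to tame every term carrying a factor $T'$. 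It is essential that the $A^2\gamma'$-type contributions be processed through this integration by parts rather than by a naive Young splitting against the dissipation: differentiating the coefficient $A^3\gamma'/T$ lands at order $A^2\gamma$ (using $A|\gamma''|\lesssim\gamma$ and $A|T'|\lesssim1$), whereas a Young splitting of the corresponding cross term would leave an uncontrollable factor of order $A^3\gamma$.

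Finally I would integrate the resulting differential inequality
\begin{align*}
\frac{\ud}{\ud t}\frac12\|\partial_z\bar U_f(t)\|_{\bar L^2_{A^2\gamma}}^2+\int_0^\infty\frac{A^3\gamma}{T}|\partial_{zz}\bar U_f|^2\,\ud z\lesssim\|\partial_z\bar U_f(t)\|_{\bar L^2_{A^2\gamma}}^2+\|\partial_z\bar U_f(t)\|_{\bar L^2_{A\gamma}}^2
\end{align*}
over $[0,t]$. The lower-order term $\int_0^t\|\partial_z\bar U_f(s)\|_{\bar L^2_{A\gamma}}^2\,\ud s$ is exactly the quantity controlled by Lemma \ref{lem:Sf1}$(i)$, hence bounded by $\|f\|_{\bar{\mathbb H}_\gamma}^2$, while the term $\int_0^t\|\partial_z\bar U_f(s)\|_{\bar L^2_{A^2\gamma}}^2\,\ud s$ is absorbed by Gronwall's inequality. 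Since $\|\partial_z\bar U_f(0)\|_{\bar L^2_{A^2\gamma}}^2=\|f'\|_{\bar L^2_{A^2\gamma}}^2$, this yields the claimed bound on $\|\partial_z\bar S(t)f\|_{\bar L^2_{A^2\gamma}}^2$, and feeding it back into the integrated inequality controls the dissipation integral $\int_0^t\int_0^\infty\frac{A^3}{T}|\partial_{zz}\bar S(s)f|^2\gamma\,\ud z\,\ud s$ as well. I expect the main obstacle to be the rigorous treatment of the energy identity near the degenerate vertex $z=0$, namely justifying the vanishing of the boundary terms and the finiteness of the intermediate integrals despite $A(0)=0$, rather than the lengthy but essentially routine coefficient bookkeeping for $\mathcal C$.
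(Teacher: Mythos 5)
Your proposal is correct and follows essentially the same route as the paper's proof: a weighted energy estimate for $\partial_z\bar{S}(t)f$ in $\bar{L}^2_{A^2\gamma}$ (the paper writes it via the equation $\partial_t \bar V_f=\widetilde{\mathcal L}\bar V_f$ for $\bar V_f=\partial_z\bar S(t)f$), isolating the dissipation $\int\frac{A^3}{T}|\partial_{zz}\bar S(s)f|^2\gamma\,\ud z$, bounding the cross and zeroth-order coefficients through Lemmas \ref{lem:TFA}--\ref{lem:AT1} and Assumption \ref{Asp:gamma} (in particular $|AT'|\lesssim 1$), and closing with Lemma \ref{lem:Sf1}$(i)$ plus Gronwall, using $A(0)=0$ to kill boundary terms. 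The only cosmetic difference is that the paper treats the $A^2A'$-cross term by Young's inequality against half the dissipation (its remainder $\frac{A|A'|^2}{T}\gamma\lesssim A\gamma$ is harmless) and integrates only the $A^3\gamma'$-term by parts, whereas you integrate both by parts; your observation that the $\gamma'$-cross term cannot be Young-split without producing an uncontrollable $A^3\gamma$ factor is precisely the constraint the paper's argument respects.
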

\begin{proof}
By the chain rule, $\bar V_{f}(t):=\partial_z\bar U_{f}(t)=\partial_z\bar S(t)f$ satisfies that $\partial_t\bar V_{f}(t,z)=\widetilde{\mathcal L}\bar V_{f}(t,z)$ with $\bar V_{f}(0)=f^\prime$ and
\begin{align*}
\widetilde{\mathcal L}&:=\frac{A(z)}{T(z)}\frac{\ud^2}{\ud z^2}+\left(2\frac{A^\prime(z)}{T(z)}-\frac{A(z)T^\prime(z)}{T(z)^2}\right)\frac{\ud}{\ud z}+\left(\frac{A^{\prime\prime}(z)}{T(z)}-\frac{A^\prime(z)T^\prime(z)}{T(z)^2}\right).
\end{align*}
 Further, by the chain rule,
\begin{align}\label{eq:Vf2}
&\partial_t |\bar V_{f}(t,z)|^2=2\bar V_{f}(t,z)\partial_t\bar V_{f}(t,z)=2\bar V_{f}(t,z)\widetilde{\mathcal L}\bar V_{f}(t,z)\\\notag
&=\frac{A(z)}{T(z)}\left[\partial_{zz}|\bar V_{f}(t,z)|^2-2|\partial_{z}\bar V_{f}(t,z)|^2\right]+\left(2\frac{A^\prime(z)}{T(z)}-\frac{A(z)T^\prime(z)}{T(z)^2}\right)\partial_z|\bar V_{f}(t,z)|^2\\\notag&\quad+2\left(\frac{A^{\prime\prime}(z)}{T(z)}-\frac{A^\prime(z)T^\prime(z)}{T(z)^2}\right)|\bar V_{f}(t,z)|^2.
\end{align}
 Integrating with respect to $\gamma(z)A(z)^2\ud z$ on both sides of \eqref{eq:Vf2} leads to 
\begin{align}\label{eq:Ufb2}
&\frac{\ud}{\ud t} \int_0^\infty A(z)^2|\bar V_{f}(t,z)|^2\gamma(z)\ud z= \int_0^\infty 2A(z)^2\bar V_{f}(t,z)\widetilde{\mathcal L}\bar V_{f}(t,z)\gamma(z)\ud z\\\notag&=\int_0^\infty \frac{A(z)^3}{T(z)}\partial_{zz}|\bar V_{f}(t,z)|^2\gamma(z)\ud z\\\notag
&\quad-2\int_0^\infty\frac{A(z)^3}{T(z)}|\partial_{z}\bar V_{f}(t,z)|^2\gamma(z)\ud z\\\notag
&\quad +\int_0^\infty \left(2\frac{A(z)^2A^\prime(z)}{T(z)}-\frac{A(z)^3T^\prime(z)}{T(z)^2}\right)\partial_z|\bar V_{f}(t,z)|^2 \gamma(z)\ud z\\\notag
&\quad+2\int_0^\infty A(z)^2\left(\frac{A^{\prime\prime}(z)}{T(z)}-\frac{A^\prime(z)T^\prime(z)}{T(z)^2}\right)|\bar V_{f}(t,z)|^2\gamma(z)\ud z=:\sum_{l=1}^4J_l.
\end{align}
In virtue of the integration by parts formula and $A(0)=0$,
\begin{align*}
J_1=-\int_0^\infty\partial_{z}|\bar V_{f}|^2 \left(\frac{3A^2A^\prime\gamma+A^3\gamma^\prime}{T}-\frac{A^3\gamma T^\prime}{T^2}\right)\ud z.
\end{align*}
Taking advantage of \eqref{eq:T}, Lemma \ref{lem:AT1}, and 
$|z\gamma^{\prime\prime}(z)|+|\gamma^{\prime}(z)|\lesssim \gamma(z)$, it holds that
 \begin{align}\label{eq:AT}
\frac{\left|A^\prime(z)\gamma^\prime(z)\right|+\left|A(z)\gamma^{\prime\prime}(z)\right|}{T(z)}+\left|\frac{A(z)\gamma^\prime(z)T^\prime(z)}{T(z)^2}\right|\lesssim \gamma(z)\quad\forall~z\in[0,\infty).
\end{align}
Utilizing the Young inequality and integrating by parts again,
\begin{align*}
J_1+J_3&=-2\int_0^\infty\bar V_{f}\partial_{z}\bar V_{f}\frac{A^2A^\prime}{T}\gamma\ud z
-\int_0^\infty \partial_z|\bar V_{f}|^2\frac{A^3\gamma^\prime}{T}\ud z\\
&\le\int_0^\infty|\partial_{z}\bar V_{f}|^2 \frac{A^3}{T}\gamma\ud z+\int_0^\infty|\bar V_{f}|^2\frac{A|A^\prime|^2}{T}\gamma\ud z\\
&\quad+ \int_0^\infty A^2\left(\frac{3A^\prime\gamma^\prime+A\gamma^{\prime\prime}}{T}-\frac{A\gamma^\prime T^\prime}{T^2}\right)|\bar V_{f}|^2\ud z\\
&\le -\frac{1}{2}J_2+C\int_0^\infty|\bar V_{f}|^2A\gamma\ud z+ C\int_0^\infty A^2\gamma|\bar V_{f}|^2\ud z,
\end{align*}
thanks to \eqref{eq:T}, \eqref{eq:AT} and the boundedness of $|A^\prime|$ in Lemma \ref{lem:AT1}.

Besides, it follows from Lemma \ref{lem:AT1} and \eqref{eq:T} that
$$\left|\frac{A^{\prime\prime}(z)}{T(z)}-\frac{A^\prime(z)T^\prime(z)}{T(z)^2}\right|\le\left|\frac{A^{\prime\prime}(z)}{T(z)}\right|+\left|\frac{A^\prime(z)T^\prime(z)}{T(z)^2}\right|\lesssim 1,$$
which yields that $J_4\lesssim\int_0^\infty|\bar V_{f}|^2A^2\gamma\ud z$. Collecting above estimates yields that
\begin{align*}
\sum_{l=1}^4J_l\le C\int_0^\infty A^2|\bar V_{f}|^2\gamma\ud z-\int_0^\infty\frac{A^3}{T}|\partial_{z}\bar V_{f}|^2\gamma\ud z+C\int_0^\infty|\bar V_{f}|^2A\gamma\ud z.
\end{align*}
Plugging it into \eqref{eq:Ufb2} and using Lemma \ref{lem:Sf1}$(i)$, we conclude that
\begin{align*}
&\quad\ \int_0^\infty A^2|\bar V_{f}(t)|^2\gamma\ud z+\int_0^t\int_0^\infty\frac{A^3}{T}|\partial_{z}\bar V_{f}(s)|^2\gamma\ud z\ud s\\\notag&\le \int_0^\infty A^2|f^\prime|^2\gamma\ud z+C\int_0^t\int_0^\infty A|\bar V_{f}(s)|^2\gamma\ud z\ud s+C\int_0^t\int_0^\infty A^2|\bar V_{f}(s)|^2\gamma\ud z\ud s\\\notag
&\le \int_0^\infty A^2|f^\prime|^2\gamma\ud z+C\int_0^\infty |f|^2\gamma T\ud z+C\int_0^t\int_0^\infty A^2|\bar V_{f}(s)|^2\gamma\ud z\ud s.
\end{align*}
Finally applying the Gronwall inequality produces the desired  result.
\end{proof}

Recall that in Proposition \ref{prop:S-I-1}, we have obtained the temporal H\"older continuity of the semigroup $\{S_\epsilon(t)\}_{t\in[0,T]}$ in $\mathbb{H}_\gamma$. 
Combining Lemma \ref{lem:partzbUf} and \eqref{eq:S-I-0}, we obtain the temporal H\"older continuity of the semigroup $\{\bar{S}(t)\}_{t\in[0,T]}$ in $\bar{L}_{A\gamma}^2$.
\begin{lem}\label{lem:S-I}
Let Assumptions \ref{Asp:gamma}-\ref{Asp:zeta}  hold, 
$f\in \bar{\mathbb{H}}_\gamma$ and $f^\prime\in \bar{L}_{A^2\gamma}^2$. Then for $t\in[0,\mathsf T]$,
\begin{equation*}
\|(\bar{S}(t)-I)f\|_{\bar{L}_{A\gamma}^2}\lesssim t^{\frac12}\big(\|f^{\prime}\|_{\bar{L}_{A^2\gamma}^2}+\|f\|_{\bar{\mathbb{H}}_\gamma}\big).
\end{equation*}
\end{lem}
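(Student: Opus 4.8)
The plan is to obtain the estimate by directly combining the two preparatory results already established, namely the bound \eqref{eq:S-I-0} and the gradient estimate of Lemma \ref{lem:partzbUf}. The work of \eqref{eq:S-I-0}---writing $(\bar{S}(t)-I)f=\int_0^t\partial_s\bar{S}(s)f\,\ud s=\int_0^t\bar{\mathcal L}\bar{S}(s)f\,\ud s$ and applying Minkowski's and Hölder's inequalities, the lower bound on $T$ from \eqref{eq:T}, and the boundedness of $|A^\prime|$ from Lemma \ref{lem:AT1}---has already reduced the desired $\bar{L}_{A\gamma}^2$-estimate to controlling the single quantity $\int_0^t\int_0^\infty\frac{A^3}{T}|\partial_{zz}\bar{S}(s)f|^2\gamma\,\ud z\,\ud s$. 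This is exactly the term appearing on the left-hand side of Lemma \ref{lem:partzbUf}, so the remaining step is purely a matter of assembling the pieces.

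First I would invoke \eqref{eq:S-I-0} to write, for $t\in(0,\mathsf T]$,
\begin{equation*}
\|(\bar{S}(t)-I)f\|_{\bar{L}_{A\gamma}^2}\lesssim t^{\frac12}\left(\int_0^t\int_0^\infty\frac{A^3}{T}|\partial_{zz}\bar{S}(s)f|^2\gamma\,\ud z\,\ud s\right)^{\frac12}+t^{\frac12}\|f\|_{\bar{\mathbb{H}}_\gamma}.
\end{equation*}
Next I would apply Lemma \ref{lem:partzbUf}, which bounds the double integral inside the square root by $\|f^\prime\|_{\bar{L}_{A^2\gamma}^2}^2+\|f\|^2_{\bar{\mathbb{H}}_\gamma}$ (the left-hand side of that lemma also controls a $\bar{L}_{A^2\gamma}^2$-norm of $\partial_z\bar{S}(t)f$, which is not needed here but comes for free). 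Using the elementary inequality $\sqrt{a+b}\le\sqrt{a}+\sqrt{b}$ and merging the resulting $t^{1/2}\|f\|_{\bar{\mathbb{H}}_\gamma}$ contributions into a single term then yields
\begin{equation*}
\|(\bar{S}(t)-I)f\|_{\bar{L}_{A\gamma}^2}\lesssim t^{\frac12}\big(\|f^{\prime}\|_{\bar{L}_{A^2\gamma}^2}+\|f\|_{\bar{\mathbb{H}}_\gamma}\big),
\end{equation*}
which is the claim; the case $t=0$ is trivial.

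The hard part is not in this lemma itself but was front-loaded into Lemma \ref{lem:partzbUf}, whose proof exploits the conjugated operator $\widetilde{\mathcal L}$ acting on $\partial_z\bar{S}(t)f$ together with the quantitative bounds on $A,T$ and their derivatives from Lemmas \ref{lem:TFA}--\ref{lem:AT1}. Thus from the standpoint of proving Lemma \ref{lem:S-I} there is essentially no obstacle beyond carefully tracking the weights $A^3/T$, $A^2\gamma$, and $A\gamma$. The genuinely conceptual point---and the reason for measuring the increment in $\bar{L}_{A\gamma}^2$ rather than in $\bar{\mathbb{H}}_\gamma=\bar{L}^2_{\gamma T}$---is the one flagged in Remark \ref{rem:St-I}: in $\bar{\mathbb{H}}_\gamma$ the Hölder constant would degenerate (blowing up like $\epsilon^{-1}$ through the lifted semigroup $S_\epsilon(t)^\wedge$), whereas the extra $A$-weight absorbs the degeneracy of $\bar{\mathcal L}$ near the vertex $z=0$, where $A(0)=0$, keeping the bound free of any singular factor.
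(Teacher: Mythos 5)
Your proposal is correct and is exactly the paper's argument: the paper states Lemma \ref{lem:S-I} as an immediate consequence of combining \eqref{eq:S-I-0} with Lemma \ref{lem:partzbUf}, which is precisely the assembly you describe. Your additional remarks on why the $\bar{L}_{A\gamma}^2$-norm is the right metric also match the paper's motivation in Remark \ref{rem:St-I}.
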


\subsection{Convergence  of exponential Euler approximation on graph}
In this part, we study the convergence analysis of the exponential Euler approximation \eqref{eq:Un} for \eqref{eq:utzk-p}. Let us begin with the gradient estimate of \eqref{eq:utzk-p}.
\begin{prop}\label{prop:uD1}
Let $b,g$ be continuously differentiable with bounded derivatives, and  Assumptions \ref{Asp:zeta}-\ref{Asp:gamma-S} hold.
If \eqref{eq:noise} holds,
 $\psi^\wedge\in \bar{\mathbb{H}}_{\sqrt\gamma}$, and
 $(\psi^\wedge)^{\prime}\in \bar{L}_{A^2\gamma}^2$,
 then
\begin{gather*}
\E\left[\|\partial_z\bar{u}(t)\|^2_{\bar{L}^2_{A^2\gamma}}\right]
 \lesssim1+\|(\psi^\wedge)^\prime\|_{\bar{L}^2_{A^2\gamma}}^2+\|\psi^\wedge\|^2_{\bar{\mathbb{H}}_{\sqrt\gamma}}\quad\forall~t\in[0,\mathsf T].
\end{gather*}
\end{prop}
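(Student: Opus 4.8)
The plan is to run the variational argument of Lemma~\ref{lem:nablau}, but now on the graph and in the weighted space $\bar{L}^2_{A^2\gamma}$. Writing $\bar V(t):=\partial_z\bar u(t)$ and differentiating \eqref{eq:utzk-p} in $z$, the process $\bar V$ solves an equation governed by the operator $\widetilde{\mathcal L}$ introduced in Lemma~\ref{lem:partzbUf}, namely $\partial_t\bar V=\widetilde{\mathcal L}\bar V+\partial_z B(\bar u)+\partial_z\big(G(\bar u)\,\partial_t\bar{\mathcal W}\big)$ with $\bar V(0)=(\psi^\wedge)'$. I would apply the It\^o formula (as in \cite[Theorem 4.32]{DP14}, the justification being a standard Galerkin/regularization argument as for Lemma~\ref{lem:nablau}) to $\|\bar V(t)\|^2_{\bar{L}^2_{A^2\gamma}}=\int_0^\infty|\bar V(t,z)|^2A(z)^2\gamma(z)\,\ud z$. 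The deterministic dissipation contribution $2\langle\bar V,\widetilde{\mathcal L}\bar V\rangle_{\bar{L}^2_{A^2\gamma}}$ is precisely the quantity $\sum_{l=1}^4 J_l$ computed in the proof of Lemma~\ref{lem:partzbUf}, so I may reuse that bound to obtain $2\langle\bar V,\widetilde{\mathcal L}\bar V\rangle_{\bar{L}^2_{A^2\gamma}}\le-\int_0^\infty\frac{A^3}{T}|\partial_z\bar V|^2\gamma\,\ud z+C\|\bar V\|^2_{\bar{L}^2_{A^2\gamma}}+C\|\bar V\|^2_{\bar{L}^2_{A\gamma}}$, where the first (negative) term is discarded and the last is a lower-order contribution.

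The drift term and the first half of the It\^o correction are routine. Boundedness of $b'$ gives $2\langle\bar V,\partial_z B(\bar u)\rangle_{\bar{L}^2_{A^2\gamma}}=2\int_0^\infty b'(\bar u)|\bar V|^2A^2\gamma\,\ud z\lesssim\|\bar V\|^2_{\bar{L}^2_{A^2\gamma}}$. Expanding $\partial_z\big(G(\bar u)(\widehat{\mathfrak{u}_j\mu})^\wedge\big)=g'(\bar u)\bar V(\widehat{\mathfrak{u}_j\mu})^\wedge+g(\bar u)\partial_z(\widehat{\mathfrak{u}_j\mu})^\wedge$, the bound \eqref{eq:ujmu1} and boundedness of $g'$ control the first piece by $C\mu(\R^2)\|\bar V\|^2_{\bar{L}^2_{A^2\gamma}}$.

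The key step, which I expect to be the main obstacle, is the second piece $\sum_j\int_0^\infty|g(\bar u)\,\partial_z(\widehat{\mathfrak{u}_j\mu})^\wedge|^2A^2\gamma\,\ud z$, where the $z$-derivative now falls on the projected noise coefficients and threatens to produce an uncontrollable factor. The remedy is that under Assumption~\ref{Asp:zeta} the projection $^\wedge$ reduces to an angular average, $\varphi^\wedge(z)=\frac1{2\pi}\int_0^{2\pi}\varphi\big(\sqrt{F(z)}(\cos\theta,\sin\theta)\big)\,\ud\theta$. Differentiating via the chain rule with $\frac{\ud}{\ud z}\sqrt{F}=\frac{F'}{2\sqrt F}$, applying Cauchy--Schwarz in $\theta$, and using $\sum_j|\nabla\widehat{\mathfrak{u}_j\mu}(x)|^2=\int_{\R^2}|\xi|^2\mu(\ud\xi)$ (finite by \eqref{eq:noise}), one gets $\sum_j|\partial_z(\widehat{\mathfrak{u}_j\mu})^\wedge(z)|^2\lesssim\frac{|F'(z)|^2}{F(z)}$. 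The crucial cancellation is the exact identity $\frac{|F'(z)|^2}{F(z)}A(z)^2=4\pi^2F(z)$ coming from \eqref{eq:TA}, which is $\lesssim z$ by Lemma~\ref{lem:TFA}; hence this piece is $\lesssim\int_0^\infty|g(\bar u)|^2z\gamma\,\ud z$. Finally, Assumption~\ref{Asp:gamma-S} (boundedness of $z^2\vartheta$) yields $z\gamma(z)=z\vartheta(z)=\sqrt{z^2\vartheta(z)}\,\sqrt{\vartheta(z)}\lesssim\sqrt\gamma(z)$, so with the linear growth of $g$ and \eqref{eq:T} this piece is $\lesssim 1+\|\bar u\|^2_{\bar{\mathbb{H}}_{\sqrt\gamma}}$. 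This chain---angular-average differentiation, the exact weight cancellation, and the passage from $z\gamma$ to $\sqrt\gamma$---is exactly what rescues the spurious $z$-factor produced by differentiating the noise.

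Collecting the estimates, taking expectations (the martingale term vanishes), and dropping the negative dissipation term produces
\begin{align*}
\E\|\bar V(t)\|^2_{\bar{L}^2_{A^2\gamma}}&\le\|(\psi^\wedge)'\|^2_{\bar{L}^2_{A^2\gamma}}+C\int_0^t\E\|\bar V(r)\|^2_{\bar{L}^2_{A^2\gamma}}\,\ud r\\
&\quad+C\int_0^t\E\|\bar V(r)\|^2_{\bar{L}^2_{A\gamma}}\,\ud r+C\int_0^t\big(1+\E\|\bar u(r)\|^2_{\bar{\mathbb{H}}_{\sqrt\gamma}}\big)\,\ud r.
\end{align*}
The third integral is bounded via Lemma~\ref{lem:Sf1}(ii) and the fourth via \eqref{eq:utHgamma}, each by $C\big(1+\|\psi^\wedge\|^2_{\bar{\mathbb{H}}_{\sqrt\gamma}}\big)$ (recalling $\bar{\mathbb{H}}_{\sqrt\gamma}\subset\bar{\mathbb{H}}_\gamma$); the Gronwall inequality then delivers the claimed estimate.
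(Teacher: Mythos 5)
Your proof is correct, but it follows a genuinely different route from the paper's. The paper works with the mild formulation \eqref{eq:ut}: it applies the Burkholder and H\"older inequalities and then invokes the semigroup gradient estimate of Lemma \ref{lem:partzbUf} as a black box on each integrand (on $\bar S(t)\psi^\wedge$, on $\bar S(t-r)B(\bar u(r))$, and on each $\bar S(t-r)\bigl(G(\bar u(r))(\widehat{\mathfrak{u}_j\mu})^{\wedge}\bigr)$, see \eqref{eq:utA2}--\eqref{eq:DSGu}), and it computes $\frac{\ud}{\ud z}(\widehat{\mathfrak{u}_j\mu})^{\wedge}$ through the level-set differentiation formula of \cite[Lemma 1.1]{FW12}, arriving at a bound of the form $\int_0^\infty|g(\bar u)|^2(1+z+z^2)\gamma\,\ud z$. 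You instead run the energy argument directly on the differentiated equation $\partial_t\bar V=\widetilde{\mathcal L}\bar V+\partial_z B(\bar u)+\partial_z\bigl(G(\bar u)\partial_t\bar{\mathcal W}\bigr)$, reusing the dissipation computation $\sum_{l=1}^4J_l$ from the proof of Lemma \ref{lem:partzbUf}, and you differentiate the projected noise via the angular-average representation $\varphi^\wedge(z)=\frac1{2\pi}\int_0^{2\pi}\varphi\bigl(\sqrt{F(z)}(\cos\theta,\sin\theta)\bigr)\ud\theta$ (recorded by the paper only in Appendix \ref{App:B2}), exploiting the exact identity $A(z)^2|F'(z)|^2/F(z)=4\pi^2F(z)$ coming from \eqref{eq:TA}; your handling of the lower-order term $\|\bar V\|^2_{\bar{L}^2_{A\gamma}}$ via Lemma \ref{lem:Sf1}(ii) is a legitimate substitute for the paper's use of Lemma \ref{lem:Sf1}(i) inside Lemma \ref{lem:partzbUf}. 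What the paper's route buys is lighter stochastic analysis: it never applies It\^o's formula to the derivative process, so no Galerkin/regularization justification is needed, whereas your route requires one --- though this is the same level of formality as the paper's own Lemma \ref{lem:nablau}, so it is not a gap relative to the paper's standards. What your route buys is a sharper noise-derivative bound: you end with $\int_0^\infty|g(\bar u)|^2 z\gamma\,\ud z$ and need only $z\sqrt{\gamma(z)}=\sqrt{z^2\vartheta(z)}\lesssim1$, which is exactly what Assumption \ref{Asp:gamma-S} provides, whereas the paper absorbs its $(1+z+z^2)\gamma$ bound by asserting that $(z+z^2)\sqrt{\gamma}$ is bounded ``because $\gamma$ decays exponentially fast'' --- a condition strictly stronger than Assumption \ref{Asp:gamma-S} as stated (it fails for Example \ref{Ex:gamma-S1} with $2<\lambda<4$). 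In this respect your argument actually covers the stated hypotheses more faithfully than the paper's own proof.
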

\begin{proof}
Under Assumption \ref{Asp:gamma-S}, $\gamma$ is uniformly bounded in $[0,\infty)$. This implies that $\|\cdot\|^2_{\bar{\mathbb{H}}_{\gamma}}\lesssim\|\cdot\|^2_{\bar{\mathbb{H}}_{\gamma}}$ since for any
$f\in \bar{\mathbb{H}}_{\sqrt\gamma}$,
\begin{equation}\label{eq:psiH}
\|f\|^2_{\bar{\mathbb{H}}_{\gamma}}
\le\sup_{z\in[0,\infty)}\sqrt{\gamma(z)}\|f\|^2_{\bar{\mathbb{H}}_{\sqrt\gamma}}\lesssim\|f\|^2_{\bar{\mathbb{H}}_{\sqrt\gamma}}.
\end{equation}
By \eqref{eq:ut}, 
the Burkholder and H\"older inequalities, and Lemma \ref{lem:partzbUf},
\begin{align}\label{eq:utA2}
\E\Big[\|\partial_z\bar{u}(t)\|_{\bar{L}^2_{A^2\gamma}}^2\Big]&\lesssim\|\partial_z\bar{S}(t)\psi^\wedge\|_{\bar{L}^2_{A^2\gamma}}^2+\int_0^t\E\Big[\|\partial_z[\bar{S}(t-r)B(\bar{u}(r))]\|_{\bar{L}^2_{A^2\gamma}}^2\Big]\ud r\\\notag
&\quad+\E\bigg[\Big\|\sum_{j=1}^\infty\int_0^t\partial_z\left[\bar{S}(t-r)\left(G(\bar{u}(r))(\widehat{\mathfrak{u}_j\mu})^{\wedge}\right)\right]\ud \beta_j(r)\Big\|_{\bar{L}^2_{A^2\gamma}}^2\bigg]\\\notag
&\lesssim
\|(\psi^\wedge)^\prime\|_{\bar{L}^2_{A^2\gamma}}^2+\|\psi^\wedge\|^2_{\bar{\mathbb{H}}_\gamma}\\\notag
&\quad+\int_0^t\E\Big[\|\partial_zB(\bar{u}(r))\|_{\bar{L}^2_{A^2\gamma}}^2\Big]\ud r+\int_0^t\E\left[\|B(\bar{u}(r))\|_{\bar{\mathbb{H}}_\gamma}^2\right]\ud r\\\notag
&\quad+\int_0^t\E\Big[\sum_{j=1}^\infty\left\|\partial_z\left[\bar{S}(t-r)\left(G(\bar{u}(r))(\widehat{\mathfrak{u}_j\mu})^{\wedge}\right)\right]\right\|_{\bar{L}^2_{A^2\gamma}}^2\Big]\ud r.
\end{align}
Making use of Lemma \ref{lem:partzbUf} again, it holds that for any $j\in\mathbb{N}_+$,
\begin{align}\label{eq:DSGu}
\big\|\partial_z[&\bar{S}(t-r)(G(\bar{u}(r))(\widehat{\mathfrak{u}_j\mu})^{\wedge})]\big\|_{\bar{L}^2_{A^2\gamma}}^2\lesssim\left\|\partial_zG(\bar{u}(r))\cdot(\widehat{\mathfrak{u}_j\mu})^{\wedge}\right\|_{\bar{L}^2_{A^2\gamma}}^2\\\notag
&+\Big\|G(\bar{u}(r))\frac{\ud}{\ud z}(\widehat{\mathfrak{u}_j\mu})^{\wedge}\Big\|_{\bar{L}^2_{A^2\gamma}}^2+\left\|G(\bar{u}(r))(\widehat{\mathfrak{u}_j\mu})^{\wedge}\right\|^2_{\bar{\mathbb{H}}_\gamma}.
\end{align}

For any  differentiable function $\varphi:\R^2\to\R$ with bounded derivative, it holds that
\begin{align*}
\frac{\ud}{\ud z}\varphi^{\wedge}(z)&=\frac{1}{T(z)} \frac{\ud}{\ud z}\oint_{C(z)} \frac{\varphi(x)}{|\nabla H(x)|} \ud l_z-\frac{T^\prime(z)}{T(z)^2}\oint_{C(z)} \frac{\varphi(x)}{|\nabla H(x)|} \ud l_z,
\quad z\in(0,\infty).
\end{align*}
According to \cite[Lemma 1.1]{FW12}, for any $z\in(0,\infty)$,
\begin{align*}
\frac{\ud}{\ud z}\oint_{C(z)} \frac{\varphi(x)}{|\nabla H(x)|} \ud l_z&=\oint_{C(z)}\frac{\nabla (\frac{\varphi}{|\nabla H|^2})(x)\cdot\nabla H(x)}{|\nabla H(x)|}+\frac{\varphi(x)}{|\nabla H(x)|^2}\frac{\Delta H(x)}{|\nabla H(x)|}\ud l_z\\
&=\oint_{C(z)}\frac{\nabla\varphi\cdot\nabla H}{|\nabla H|^3}-2\varphi\frac{(\nabla H)^\top\nabla^2H\nabla H}{|\nabla H|^5}+\varphi\frac{\Delta H}{|\nabla H|^3}\ud l_z.
\end{align*}
For the Hamiltonian $H(x)=|x|^2+\zeta(|x|^2)$ satisfying Assumption \ref{Asp:zeta},
by \eqref{eq:zeta1}, on every level set $C(z)=\{x\in\R^2:|x|=\sqrt{F(z)}\}$, one has that $\ud l_z=\sqrt{F(z)}\ud \theta$ and 
\begin{equation}\label{eq:FzH}
2\sqrt{F(z)}(1-r_0)\le|\nabla H(x)|=2|x|\big(1+\zeta^\prime(|x|^2)\big)\le2\sqrt{F(z)}(1+\tilde{r}_0).
\end{equation}
A direct calculation gives 
\begin{gather*}
\frac{\Delta H}{|\nabla H|^3}-2\frac{(\nabla H)^\top\nabla^2H\nabla H}{|\nabla H|^5}=-4\frac{|x|^2\zeta^{\prime\prime}(|x|^2)}{|\nabla H(x)|^3},\\
\frac{\nabla\varphi\cdot\nabla H}{|\nabla H|^3}-\varphi\frac{(\nabla H)^\top\nabla^2H\nabla H}{|\nabla H|^5}+\varphi\frac{\Delta H}{|\nabla H|^3}=\frac{2(1+\zeta^\prime(|x|^2))\langle x,\nabla\varphi(x)\rangle-4|x|^2\zeta^{\prime\prime}(|x|^2)\varphi(x)}{|\nabla H(x)|^3}.
\end{gather*}
Taking $\varphi=\widehat{\mathfrak{u}_j\mu}$, $j\in\mathbb N_+$, we obtain that for any $z\in(0,\infty)$, $\frac{\ud}{\ud z}(\widehat{\mathfrak{u}_j\mu})^{\wedge}(z)$ is equal to
\begin{align*}
&\frac{1}{T(z)}\oint_{C(z)}\frac{2(1+\zeta^\prime(|x|^2))\langle x,\nabla\widehat{\mathfrak{u}_j\mu}(x)\rangle-4|x|^2\zeta^{\prime\prime}(|x|^2)\widehat{\mathfrak{u}_j\mu}(x)}{|\nabla H(x)|^3}\ud l_z-\frac{T^\prime(z)}{T(z)^2}\oint_{C(z)} \frac{\widehat{\mathfrak{u}_j\mu}(x)}{|\nabla H(x)|} \ud l_z\\
&=\oint_{C(z)}\frac{2(1+\zeta^\prime(|x|^2))\langle x,\nabla\widehat{\mathfrak{u}_j\mu}(x)\rangle}{|\nabla H(x)|^2}\ud \mu_z+\oint_{C(z)}\left(\frac{-4|x|^2\zeta^{\prime\prime}(|x|^2)}{|\nabla H(x)|^2}- \frac{T^\prime(z)}{T(z)}\right)\widehat{\mathfrak{u}_j\mu}(x) \ud \mu_z,
\end{align*}
where $\ud\mu_z=\frac{1}{T(z)}\frac{1}{|\nabla H(x)|}\ud l_z$ is a probability measure supported on $C(z)$.
Since $T(z)\ge\pi/(1+\tilde{r}_0)$ for all $z\in[0,\infty)$, it follows from the Cauchy--Schwarz inequality, \eqref{eq:zeta1}-\eqref{eq:zeta3}, \eqref{eq:T}, and Lemma \ref{lem:AT1} that for any $j\in\mathbb{N}_+$,
\begin{align*}
&\Big\|G(\bar{u}(r))\frac{\ud}{\ud z}(\widehat{\mathfrak{u}_j\mu})^{\wedge}\Big\|_{\bar{L}^2_{A^2\gamma}}^2\\
&\lesssim \int_0^\infty|g(\bar{u}(r,z))|^2\bigg|\oint_{C(z)}2(1+\zeta^\prime(|x|^2))\frac{\langle x,\nabla\widehat{\mathfrak{u}_j\mu}(x)\rangle}{|\nabla H(x)|^2}\ud \mu_z\bigg|^2
A(z)^2\gamma(z)\ud z\\
&\quad+ \int_0^\infty|g(\bar{u}(r,z))|^2\bigg|\oint_{C(z)}\left(\frac{4|x|^2\zeta^{\prime\prime}(|x|^2)}{|\nabla H(x)|^2}+\frac{T^\prime(z)}{T(z)}\right)\widehat{\mathfrak{u}_j\mu}(x) \ud \mu_z\bigg|^2
A(z)^2\gamma(z)\ud z\\
&\lesssim \int_0^\infty|g(\bar{u}(r,z))|^2\oint_{C(z)}\frac{|\langle x,\nabla\widehat{\mathfrak{u}_j\mu}(x)\rangle|^2}{|\nabla H(x)|^4}\ud \mu_z
A(z)^2\gamma(z)\ud z\\
&\quad+ \int_0^\infty|g(\bar{u}(r,z))|^2\oint_{C(z)}\left(1+|\nabla H(x)|^{-4}\right)|\widehat{\mathfrak{u}_j\mu}(x)|^2 \ud \mu_z
A(z)^2\gamma(z)\ud z.
\end{align*}
By the Parserval inequality and \eqref{eq:noise},
\begin{gather*}
\sum_{j=1}^\infty|\langle x,\nabla\widehat{\mathfrak{u}_j\mu}(x)\rangle|^2=\sum_{j=1}^\infty\Big|\int_{\R^2}\langle x,\textup{i}\xi\rangle e^{\textup{i}x\cdot\xi}\mathfrak{u}_j(\xi)\mu(\ud\xi)\Big|^2=\int_{\R^2}|\langle x,\textup{i}\xi\rangle e^{\textup{i}x\cdot\xi}|^2\mu(\ud \xi)\lesssim |x|^2.
\end{gather*}
In this way, by using \eqref{eq:ujmu0} and the linear growth of $g$,
\begin{align*}
\sum_{j=1}^\infty\Big\|G(\bar{u}(r))\frac{\ud}{\ud z}(\widehat{\mathfrak{u}_j\mu})^{\wedge}\Big\|_{\bar{L}^2_{A^2\gamma}}^2
\lesssim\int_0^\infty|g(\bar{u}(r,z))|^2\oint_{C(z)}\frac{|x|^2}{|\nabla H(x)|^4}\ud \mu_z
A(z)^2\gamma(z)\ud z\\
+\int_0^\infty|g(\bar{u}(r,z))|^2\oint_{C(z)}\left(1+|\nabla H(x)|^{-4}\right)\ud \mu_z
A(z)^2\gamma(z)\ud z.
\end{align*}
According to \eqref{eq:FzH}, \eqref{eq:A} and \eqref{eq:F}, one has that $|x|\lesssim |\nabla H(x)|$ and
 $|\nabla H(x)|^{-1}\lesssim F(z)^{-\frac12}\lesssim A(z)^{-\frac12}$ for any $x\in C(z)$ and $z>0$. Hence,
\begin{align*}
\sum_{j=1}^\infty\Big\|G(\bar{u}(r))\frac{\ud}{\ud z}(\widehat{\mathfrak{u}_j\mu})^{\wedge}\Big\|_{\bar{L}^2_{A^2\gamma}}^2
&\lesssim\int_0^\infty|g(\bar{u}(r,z))|^2(1+z+z^2)\gamma(z)\ud z\\
&\lesssim\int_0^\infty\left(1+|\bar{u}(r,z)|^2\right)\left(\gamma(z)+\sqrt{\gamma(z)}\right)\ud z\\
&\lesssim 1+\|\bar{u}(r)\|_{\bar{\mathbb{H}}_{\sqrt\gamma}}^2+\|\bar{u}(r)\|_{\bar{\mathbb{H}}_{\gamma}}^2\lesssim 1+\|\bar{u}(r)\|_{\bar{\mathbb{H}}_{\sqrt\gamma}}^2,
\end{align*}
where we have also used \eqref{eq:psiH}, $\gamma,\sqrt{\gamma}\in L^1(0,\infty)$ and the fact that $z\mapsto (z+z^2)\sqrt{\gamma(z)}$ is uniformly bounded on $[0,\infty)$ because $\gamma$ decays exponentially fast at infinity.
By summing over $j\in\mathbb N_+$ on both sides of \eqref{eq:DSGu}, one deduces that
\begin{align}\label{eq:DSGu-1}
&\sum_{j=1}^\infty\big\|\partial_z[\bar{S}(t-r)G(\bar{u}(r))(\widehat{\mathfrak{u}_j\mu})^{\wedge}]\big\|_{\bar{L}^2_{A^2\gamma}}^2\lesssim\left\|\partial_z\bar{u}(r)\right\|_{\bar{L}^2_{A^2\gamma}}^2+1+\|\bar{u}(r)\|_{\bar{\mathbb{H}}_{\sqrt\gamma}}^2,
\end{align}
thanks to \eqref{eq:barGlin-growth} and \eqref{eq:psiH}.
Gathering \eqref{eq:psiH}, \eqref{eq:utA2}, \eqref{eq:utHgamma} and  \eqref{eq:DSGu-1} together leads to
\begin{align*}
\E\left[\|\partial_z\bar{u}(t)\|_{\bar{L}^2_{A^2\gamma}}^2\right]
&\lesssim\|(\psi^\wedge)^\prime\|_{\bar{L}^2_{A^2\gamma}}^2+\|\psi^\wedge\|^2_{\bar{\mathbb{H}}_\gamma}+\int_0^t\E\Big[\|\partial_z\bar{u}(r)\|_{\bar{L}^2_{A^2\gamma}}^2\Big]\ud r\\
&\quad+1+\int_0^t\E\left[\|\bar{u}(r)\|_{\bar{\mathbb{H}}_{\sqrt\gamma}}^2\right]\ud r+\int_0^t\E\left[\|\bar{u}(r)\|_{\bar{\mathbb{H}}_{\gamma}}^2\right]\ud r\\
&\lesssim1+\|(\psi^\wedge)^\prime\|_{\bar{L}^2_{A^2\gamma}}^2+\|\psi^\wedge\|^2_{\bar{\mathbb{H}}_{\sqrt\gamma}} +\int_0^t\E\Big[\|\partial_z\bar{u}(r)\|_{\bar{L}^2_{A^2\gamma}}^2\Big]\ud r,
\end{align*}
which in combination with the Gronwall inequality completes the proof.
\end{proof}

Based on the above gradient estimate, we further present the temporal H\"older continuity of the solution $\bar{u}$ to the limiting equation \eqref{eq:utzk-p}.

\begin{prop}\label{eq:baru-Holder}
Under the same conditions of Proposition \ref{prop:uD1}, 
\begin{align*}
\E\left[\|\bar{u}(t)-\bar{u}(s)\|_{\bar{L}^2_{A\gamma}}^2\right]\le C(t-s)\quad \text{for any }0\le s<t\le \mathsf T.
\end{align*}
\end{prop}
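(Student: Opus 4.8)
The plan is to mimic the proof of Proposition~\ref{prop:Holder}, but in the graph-tailored weighted space $\bar{L}^2_{A\gamma}$. Starting from the mild formulation~\eqref{eq:ut}, for $0\le s<t\le\mathsf T$ I would write
\[
\bar{u}(t)-\bar{u}(s)=(\bar{S}(t-s)-I)\bar{u}(s)+\int_s^t\bar{S}(t-r)B(\bar{u}(r))\,\ud r+\int_s^t\bar{S}(t-r)G(\bar{u}(r))\,\ud\bar{\mathcal{W}}(r),
\]
and bound the $\bar{L}^2_{A\gamma}$-norm of the three terms separately.

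For the first term I would apply the temporal H\"older continuity of the semigroup in $\bar{L}^2_{A\gamma}$ from Lemma~\ref{lem:S-I} pathwise, with $f=\bar{u}(s)$, then square and take expectations, obtaining
\[
\E\big[\|(\bar{S}(t-s)-I)\bar{u}(s)\|_{\bar{L}^2_{A\gamma}}^2\big]\lesssim (t-s)\,\E\big[\|\partial_z\bar{u}(s)\|_{\bar{L}^2_{A^2\gamma}}^2+\|\bar{u}(s)\|_{\bar{\mathbb{H}}_\gamma}^2\big].
\]
The right-hand side is bounded uniformly in $s$ by the gradient estimate of Proposition~\ref{prop:uD1} together with the a priori bound~\eqref{eq:utHgamma}, producing the contribution $C(t-s)$.

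For the two convolution terms I would use the boundedness of $\bar{S}$ on $\bar{L}^2_{A\gamma}$, the linear growth of $B$ and $G$ measured in the weight $A\gamma$, and an a priori moment bound $\sup_{r\in[0,\mathsf T]}\E[\|\bar{u}(r)\|_{\bar{L}^2_{A\gamma}}^2]\le C$. The growth estimates are admissible because $A\gamma\asymp z\gamma\lesssim\sqrt{\gamma}\in L^1(0,\infty)$ under Assumption~\ref{Asp:gamma-S} (since $z^2\vartheta$ is bounded, $z\sqrt{\gamma}$ is bounded, whence $z\gamma\lesssim\sqrt{\gamma}$, which is integrable by the $\sqrt{\gamma}$-version of~\eqref{eq:IkTk}). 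The deterministic convolution is then handled by the H\"older inequality, giving $C(t-s)^2$, while the stochastic convolution is handled by the Burkholder inequality~\eqref{eq:BDG} in the $\overline{RK}\to\bar{L}^2_{A\gamma}$ Hilbert--Schmidt norm, using the $A\gamma$-analogue of~\eqref{eq:barGlin-growth}, giving $C(t-s)$. Summing the three contributions and using $(t-s)^2\le\mathsf T(t-s)$ yields the claim.

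The main obstacle is the analysis in the degenerate weight $A\gamma$: neither the boundedness of the (non-uniformly-elliptic) semigroup $\bar{S}$ on $\bar{L}^2_{A\gamma}$ nor the a priori $\bar{L}^2_{A\gamma}$-moments of $\bar{u}$ follow from Lemma~\ref{lem:Sf1}, which is stated in $\bar{L}^2_{\gamma T}$, because $A\gamma$ vanishes at the vertex $z=0$ and grows by the factor $A\asymp z$ at infinity relative to $\gamma T$, so the two norms are \emph{not} comparable. To establish them I would run an energy estimate paralleling Lemma~\ref{lem:Sf1}(i): multiply $\partial_t\bar{S}(t)f=\bar{\mathcal{L}}\bar{S}(t)f$ by $2(\bar{S}(t)f)A\gamma$, integrate by parts (the boundary terms at $z=0$ vanish since $A(0)=0$), and absorb the resulting first-order drift terms into the dissipation. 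The delicate point is that these drift contributions must be dominated precisely by $A\gamma$; here the custom bounds $|A''|\le C$ and $|A T'|\le C$ from Lemma~\ref{lem:AT1}, the two-sided bounds on $T$ and $A$ from Lemma~\ref{lem:TFA}, and the weight conditions $|z\vartheta''|+|\vartheta'|\lesssim\vartheta$ of Assumption~\ref{Asp:gamma} are exactly what make the estimate close; the same ingredients then give the moment bound by applying the Gronwall inequality to the mild form, after which the argument above goes through.
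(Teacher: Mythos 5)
Your decomposition and your treatment of the first term coincide with the paper's proof: both apply Lemma \ref{lem:S-I} pathwise with $f=\bar u(s)$ and then invoke Proposition \ref{prop:uD1} and \eqref{eq:utHgamma}. The divergence is in the convolution terms, and there your argument has a genuine gap together with an unnecessary detour. The paper never builds any semigroup theory or a priori moments in the degenerate weight $A\gamma$: it observes that, since $z\sqrt{\gamma(z)}$ is bounded under Assumption \ref{Asp:gamma-S}, Lemma \ref{lem:TFA} gives $A(z)\gamma(z)\lesssim z\gamma(z)\lesssim\sqrt{\gamma(z)}\,T(z)$, i.e.\ $\|\cdot\|_{\bar{L}^2_{A\gamma}}\lesssim\|\cdot\|_{\bar{\mathbb{H}}_{\sqrt\gamma}}$. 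The stochastic convolution is then estimated in $\bar{\mathbb{H}}_{\sqrt\gamma}$, where boundedness of $\bar S$ is already available — Lemma \ref{lem:Sf1}(i) holds with $\gamma$ replaced by $\sqrt\gamma$ precisely because Assumption \ref{Asp:gamma-S} demands that $\sqrt\gamma$ satisfy Assumption \ref{Asp:gamma} — and the proof closes with the Burkholder inequality, \eqref{eq:ujmu1}, the linear growth of $g$, and the second inequality of \eqref{eq:utHgamma}. Notice that you wrote down the decisive inequality yourself ($z\gamma\lesssim\sqrt\gamma$) but used it only for integrability of the weight; combined with the two-sided bound on $T$ in \eqref{eq:T}, it is exactly the norm comparison that makes your ``main obstacle'' disappear.

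The gap in your alternative route: the energy estimate in $\bar{L}^2_{A\gamma}$ does not close the way you claim. Multiplying $\partial_t\bar U_f=\bar{\mathcal L}\bar U_f$ by $2\bar U_f A\gamma$ and integrating by parts twice (using $A(0)=0$), the zeroth-order remainder contains, among others,
\begin{equation*}
\int_0^\infty \frac{|A'(z)|^2}{T(z)}|\bar U_f(t,z)|^2\gamma(z)\,\ud z
\qquad\text{and terms proportional to}\qquad \frac{AA'\gamma T'}{T^2},\ \frac{A^2\gamma |T'|^2}{T^3},
\end{equation*}
which by Lemmas \ref{lem:TFA}--\ref{lem:AT1} are of size $\gamma$, not $A\gamma$ (indeed $A'(0)=2\pi\neq0$). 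Near the vertex $z=0$ one has $A\gamma\asymp z\gamma\to0$ while $\gamma$ stays bounded away from $0$, so these contributions are \emph{not} dominated by $A\gamma$, and a pure Gronwall argument on $\int_0^\infty A\gamma|\bar U_f|^2\,\ud z$ fails; they also cannot be absorbed into the dissipation, which only controls $\partial_z\bar U_f$. The estimate can be rescued by bounding the $\gamma$-weighted remainders via Lemma \ref{lem:Sf1}(i), yielding the mixed bound $\|\bar S(t)f\|^2_{\bar{L}^2_{A\gamma}}\lesssim\|f\|^2_{\bar{L}^2_{A\gamma}}+\|f\|^2_{\bar{\mathbb{H}}_\gamma}$ — this is exactly the two-tier pattern the paper uses in Lemma \ref{lem:partzbUf} for the weight $A^2\gamma$ — after which your mild-formulation Gronwall for the moments also goes through. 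So your route is repairable, but as stated the closure claim is false, and the repaired version rebuilds machinery that the embedding into $\bar{\mathbb{H}}_{\sqrt\gamma}$ renders unnecessary.
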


\begin{proof}
Assume for simplicity that $b\equiv 0$.
Since $z\mapsto z\sqrt{\gamma(z)}$ is uniformly bounded on $[0,\infty)$, it follows from Lemma \ref{lem:TFA} that
$|A(z)\gamma(z)|\lesssim z\gamma(z)\lesssim\sqrt{\gamma(z)}T(z)$ for all $z\in[0,\infty)$,
which indicates that
$\|\cdot\|_{\bar{L}^2_{A\gamma}}\lesssim\|\cdot\|_{\bar{\mathbb{H}}_{\sqrt\gamma}}$.
Hence, by means of Lemma \ref{lem:S-I} and the Burkholder inequality,
\begin{align*}
&\quad\ \E\left[\|\bar{u}(t)-\bar{u}(s)\|_{\bar{L}_{A\gamma}^2}^2\right]\\
&\lesssim\E\left[\|(\bar{S}(t-s)-I)\bar{u}(s)\|_{\bar{L}_{A\gamma}^2}^2\right]+\E\left[\Big\|\int_s^t\bar{S}(t-r)G(\bar{u}(r))\ud \bar{\mathcal W}(r)\Big\|_{\bar{L}_{A\gamma}^2}^2\right]\\
&\lesssim (t-s)\Big(\E\big[\|\partial_z\bar{u}(s)\|_{\bar{L}_{A^2\gamma}^2}^2\big]+\E\big[\|\bar{u}(s)\|_{\bar{\mathbb{H}}_{\gamma}}^2\big]\Big)+\E\left[\Big\|\int_s^t\bar{S}(t-r)G(\bar{u}(r))\ud \bar{\mathcal W}(r)\Big\|_{\bar{\mathbb{H}}_{\sqrt\gamma}}^2\right]\\
&\lesssim (t-s)+\int_s^t\sum_{j=1}^\infty\E\left[\left\|\bar{S}(t-r)G(\bar{u}(r))(\widehat{\mathfrak{u}_j\mu})^{\wedge}\right\|_{\bar{\mathbb{H}}_{\sqrt\gamma}}^2\right]\ud r,
\end{align*}
where in the last step we also used Proposition \ref{prop:uD1} and the first inequality of \eqref{eq:utHgamma}. Using Lemma \ref{lem:Sf1}$(i)$ with $\gamma$ replaced by $\sqrt\gamma$, the linear growth of $g$, the second inequality of \eqref{eq:utHgamma}, and \eqref{eq:ujmu1}, it holds that for any $r\in[s,t]\subset[0,\mathsf T]$,
\begin{align}\label{eq:SGu}
&\sum_{j=1}^\infty\E\left[\left\|\bar{S}(t-r)G(\bar{u}(r))(\widehat{\mathfrak{u}_j\mu})^\wedge\right\|_{\bar{\mathbb{H}}_{\sqrt\gamma}}^2\right]\lesssim\sum_{j=1}^\infty\E\left[\left\|G(\bar{u}(r))(\widehat{\mathfrak{u}_j\mu})^\wedge\right\|_{\bar{\mathbb{H}}_{\sqrt\gamma}}^2\right]\\\notag
&\lesssim\mu(\R^2)\E\left[\left\|G(\bar{u}(r))\right\|_{\bar{\mathbb{H}}_{\sqrt\gamma}}^2\right]\lesssim1+\E\left[\left\|\bar{u}(r)\right\|_{\bar{\mathbb{H}}_{\sqrt\gamma}}^2\right]\lesssim1,
\end{align}
which completes the proof.
\end{proof}

Motivated by Lemma \ref{lem:S-I},
we measure
the difference between $\bar{U}^N$ and $\bar{u}(\mathsf T)$ in the graph weighted space $\bar{L}^2_{A\gamma}$.

\begin{tho}\label{theo:MS-1D}
Let $b,g$ be continuously differentiable with bounded derivatives, and  Assumptions \ref{Asp:zeta}-\ref{Asp:gamma-S}  hold.
If \eqref{eq:noise} holds,
 $\psi^\wedge\in \bar{\mathbb{H}}_{\sqrt\gamma}$, and
 $(\psi^\wedge)^{\prime}\in \bar{L}_{A^2\gamma}^2$,
 then
\begin{gather*}
\mathbb{E}\left[\|\bar{u}(t_n)-\bar{U}^{n}\|^2_{\bar{L}^2_{A\gamma}}\right]\lesssim \tau\quad\forall~n=1,\ldots,N.
\end{gather*}
\end{tho}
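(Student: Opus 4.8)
The plan is to follow the blueprint of Theorem~\ref{theo:MS-2D}, but now in the degenerate weighted space $\bar{L}^2_{A\gamma}$ and with $\bar{S}$ in place of $S_\epsilon$. Writing the mild solution \eqref{eq:ut} at $t=t_n$ against the iterated scheme \eqref{eq:Un}, and inserting the intermediate values $\bar{u}(t_j)$ and $\bar{S}(t_n-t_j)$ into the integrands, I would split $\bar{u}(t_n)-\bar{U}^n$ into six sums over $j=0,\dots,n-1$ of integrals on $[t_j,t_{j+1}]$, exactly as in the decomposition preceding \eqref{eq:u-U}: two \emph{semigroup--increment} terms carrying $\bar{S}(t_n-s)-\bar{S}(t_n-t_j)$ (one for $B$, one for the stochastic integral of $G$), two \emph{time--regularity} terms carrying $B(\bar{u}(s))-B(\bar{u}(t_j))$ and $G(\bar{u}(s))-G(\bar{u}(t_j))$, and two \emph{recursive} terms carrying $B(\bar{u}(t_j))-B(\bar{U}^j)$ and $G(\bar{u}(t_j))-G(\bar{U}^j)$. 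Taking $\E[\|\cdot\|^2_{\bar{L}^2_{A\gamma}}]$ and applying the Burkholder inequality for the graph noise $\bar{\mathcal{W}}$ reduces the problem to estimating these six pieces.

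For the semigroup--increment terms I would use that $\bar{S}(t_n-s)-\bar{S}(t_n-t_j)=(I-\bar{S}(s-t_j))\,\bar{S}(t_n-s)$ and then invoke Lemma~\ref{lem:S-I}, which bounds $\|(I-\bar{S}(s-t_j))h\|_{\bar{L}^2_{A\gamma}}$ by $(s-t_j)^{1/2}\bigl(\|\partial_z h\|_{\bar{L}^2_{A^2\gamma}}+\|h\|_{\bar{\mathbb{H}}_\gamma}\bigr)$ with $h=\bar{S}(t_n-s)B(\bar{u}(s))$, respectively $h=\bar{S}(t_n-s)G(\bar{u}(s))(\widehat{\mathfrak{u}_l\mu})^{\wedge}$. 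The gradient norm $\|\partial_z h\|_{\bar{L}^2_{A^2\gamma}}$ is controlled by the semigroup gradient estimate of Lemma~\ref{lem:partzbUf} (for the $B$-term, using $\partial_zB(\bar{u})=b'(\bar{u})\partial_z\bar{u}$ and Proposition~\ref{prop:uD1}; for the $G$-term, the summation estimate \eqref{eq:DSGu-1} already delivers $\sum_l\|\partial_z h\|^2_{\bar{L}^2_{A^2\gamma}}$ directly), while $\|h\|_{\bar{\mathbb{H}}_\gamma}$ follows from the $\bar{\mathbb{H}}_\gamma$-contraction in Lemma~\ref{lem:Sf1}(i) together with the a priori bounds \eqref{eq:utHgamma} and the linear growth \eqref{eq:barGlin-growth}, \eqref{eq:ujmu1}. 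Since $s-t_j\le\tau$, each contributes $O(\tau)$. The two time--regularity terms are handled by the pointwise Lipschitz continuity of $b,g$, which gives $\|B(\bar{u}(s))-B(\bar{u}(t_j))\|_{\bar{L}^2_{A\gamma}}\lesssim\|\bar{u}(s)-\bar{u}(t_j)\|_{\bar{L}^2_{A\gamma}}$ (and the analogue for $G$ via \eqref{eq:ujmu1}), combined with the temporal H\"older estimate of Proposition~\ref{eq:baru-Holder} and the boundedness of $\bar{S}(t_n-t_j)$ on $\bar{L}^2_{A\gamma}$; these are again $O(\tau)$.

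For the recursive terms I would once more use the pointwise Lipschitz bounds to replace $B(\bar{u}(t_j))-B(\bar{U}^j)$ and $G(\bar{u}(t_j))-G(\bar{U}^j)$ by $\bar{u}(t_j)-\bar{U}^j$ in $\bar{L}^2_{A\gamma}$, apply the $\bar{S}(t_n-t_j)$-boundedness on $\bar{L}^2_{A\gamma}$, and collect $\tau\sum_{j=0}^{n-1}\E[\|\bar{u}(t_j)-\bar{U}^j\|^2_{\bar{L}^2_{A\gamma}}]$. Assembling all six estimates then produces an inequality of the form $\E[\|\bar{u}(t_n)-\bar{U}^n\|^2_{\bar{L}^2_{A\gamma}}]\lesssim\tau+\tau\sum_{j=0}^{n-1}\E[\|\bar{u}(t_j)-\bar{U}^j\|^2_{\bar{L}^2_{A\gamma}}]$, and the discrete Gronwall inequality yields the claimed bound $\E[\|\bar{u}(t_n)-\bar{U}^n\|^2_{\bar{L}^2_{A\gamma}}]\lesssim\tau$.

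The hard part will be the boundedness of $\bar{S}(t)$ on the \emph{degenerate} space $\bar{L}^2_{A\gamma}$, which is used repeatedly in the time--regularity and recursive terms and which is not supplied by Lemma~\ref{lem:Sf1}(i): because $A(0)=0$, the space $\bar{L}^2_{A\gamma}$ is not the natural energy space of $\bar{\mathcal{L}}$ (that role belongs to $\bar{L}^2_{\gamma T}=\bar{\mathbb{H}}_\gamma$). Indeed, the energy identity for $\frac{\ud}{\ud t}\|\bar{S}(t)f\|^2_{\bar{L}^2_{A\gamma}}$ produces, besides the favorable Dirichlet form $-2\int_0^\infty(\partial_z\bar{S}(t)f)^2\,\tfrac{A^2}{T}\gamma\,\ud z$, a zeroth-order term $\int_0^\infty(\bar{S}(t)f)^2 R\,\ud z$ whose coefficient $R$ does \emph{not} vanish at the vertex $z=0$ and is therefore not dominated by the weight $A\gamma$ there, so that a naive estimate would leak an uncontrollable $\bar{\mathbb{H}}_\gamma$-contribution (destroying the $O(\tau)$ rate in the Gronwall step). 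The crux is to absorb this near-vertex term into the Dirichlet form through a Hardy-type inequality, exploiting $A(z)\sim z$ and $A^2\gamma/T\sim z^2$ as $z\to0^+$ together with a trace estimate at an interior point, so that $\|\bar{S}(t)f\|_{\bar{L}^2_{A\gamma}}\lesssim\|f\|_{\bar{L}^2_{A\gamma}}$ holds cleanly. This degenerate-weight boundedness, coupled with the temporal H\"older regularity of Lemma~\ref{lem:S-I}, is precisely the mechanism that offsets the singularity near the vertex and closes the argument.
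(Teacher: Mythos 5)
Your proof skeleton coincides with the paper's: the same decomposition (the paper's \eqref{eq:u-Ubar}), the same rewriting $\bar{S}(t_n-s)-\bar{S}(t_n-t_j)=(\bar{S}(s-t_j)-I)\bar{S}(t_n-s)$ followed by Lemma \ref{lem:S-I}, the same control of the gradient norms via Lemma \ref{lem:partzbUf}, \eqref{eq:DSGu-1} and Proposition \ref{prop:uD1}, the same use of Proposition \ref{eq:baru-Holder} for the time-regularity terms, and the same discrete Gronwall closure. The genuine gap lies exactly in the step you single out as the crux: the claim that a Hardy-type inequality yields $\|\bar{S}(t)f\|_{\bar{L}^2_{A\gamma}}\lesssim\|f\|_{\bar{L}^2_{A\gamma}}$. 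This estimate is \emph{false}: $\bar{S}(t)$ is unbounded on $\bar{L}^2_{A\gamma}$ for every $t>0$. Indeed, under Assumption \ref{Asp:zeta} the advection term annihilates radial functions, so Lemma \ref{lemL-L} gives $(\bar{S}(t)f)^{\vee}=e^{t\Delta/2}f^{\vee}$, i.e.\ $\bar{S}(t)$ is the two-dimensional heat semigroup acting on radial functions; moreover, by the co-area formula together with \eqref{eq:T}, \eqref{eq:A} and \eqref{eq:F}, $\|f\|^2_{\bar{L}^2_{A\gamma}}=\int_{\R^2}|f^{\vee}(x)|^2\,\frac{A(H(x))\gamma(H(x))}{T(H(x))}\,\ud x$, and this weight is comparable to $|x|^2$ on $\{|x|\le2\}$. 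Now take $f_n$ with $f_n^{\vee}(y)=|y|^{-2}\mathbb{I}_{\{\epsilon_n\le|y|\le1\}}(y)$. Then
\begin{equation*}
\|f_n\|^2_{\bar{L}^2_{A\gamma}}\approx\int_{\{\epsilon_n\le|y|\le1\}}|y|^{-2}\,\ud y\approx\log(1/\epsilon_n),
\end{equation*}
while the lower bound of the heat kernel of $e^{t\Delta/2}$ on compact sets gives $e^{t\Delta/2}f_n^{\vee}(x)\gtrsim c_t\log(1/\epsilon_n)$ for all $|x|\le2$, hence $\|\bar{S}(t)f_n\|^2_{\bar{L}^2_{A\gamma}}\gtrsim c_t^2\log^2(1/\epsilon_n)$ by integrating over the annulus $\{1\le|x|\le2\}$, where the weight is bounded below. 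The ratio of the two norms diverges like $\log(1/\epsilon_n)$, so no constant---and in particular no Hardy/trace argument, however the constants are arranged---can deliver the clean stability bound. The $\bar{\mathbb{H}}_\gamma$-leakage you observed in the energy identity is intrinsic, not an artifact of a naive estimate; what is actually true is only $\|\bar{S}(t)h\|^2_{\bar{L}^2_{A\gamma}}\lesssim\|h\|^2_{\bar{L}^2_{A\gamma}}+t\,\|h\|^2_{\bar{\mathbb{H}}_\gamma}$, obtained from that energy identity combined with Lemma \ref{lem:Sf1}(i).

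Consequently the time-regularity and recursive terms in your argument---which apply $\bar{S}(t_n-t_j)$ to $B(\bar{u}(s))-B(\bar{u}(t_j))$, $G(\bar{u}(t_j))-G(\bar{U}^j)$, etc., and then need to pass to $\bar{L}^2_{A\gamma}$-norms of the differences before Proposition \ref{eq:baru-Holder} and the Gronwall step can be invoked---are unsupported, and the proof cannot be completed along the route you describe. To be fair, you have isolated a point the paper treats silently: \eqref{eq:u-Ubar} is asserted ``similarly as in the proof of \eqref{eq:u-U}'', and for the two sums without semigroup increments this tacitly uses the $\bar{L}^2_{A\gamma}$-analogue of \eqref{eq:Seps}, which Lemma \ref{lem:Sf1}(i) supplies only in $\bar{\mathbb{H}}_\gamma=\bar{L}^2_{\gamma T}$. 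So your diagnosis of where the difficulty sits is sound, but the cure you propose consists in proving a false statement; any correct completion must keep and control the unavoidable $\bar{\mathbb{H}}_\gamma$-contributions (for instance through the factor-$t$ bound above), rather than eliminate them.
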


\begin{proof}
By \eqref{eq:ut} and \eqref{eq:Un}, similarly as in the proof of \eqref{eq:u-U}, we have
\begin{align}\label{eq:u-Ubar}
&\mathbb{E}\left[\|\bar{u}(t_n)-\bar{U}^{n}\|^2_{\bar{L}^2_{A\gamma}}\right]\\\notag
&\lesssim\sum_{j=0}^{n-1}\int_{t_{j}}^{t_{j+1}}\mathbb{E}\left[\left\|\left(\bar{S}(t_n-s)-\bar{S}(t_n-t_j)\right)B(\bar{u}(s))\right\|^2_{\bar{L}^2_{A\gamma}}\right]\ud s\\\notag
&\quad+ \sum_{j=0}^{n-1}\int_{t_{j}}^{t_{j+1}}\sum_{l=1}^\infty\mathbb{E}\left[\left\|\left(\bar{S}(t_n-s)-\bar{S}(t_n-t_j)\right)G(\bar{u}(s))(\widehat{\mathfrak{u}_l\mu})^\wedge\right\|^2_{\bar{L}^2_{A\gamma}}\right]\ud s\\\notag
&\quad+\sum_{j=0}^{n-1}\int_{t_{j}}^{t_{j+1}}\mathbb{E}\left[\|\bar{u}(s)-\bar{u}(t_{j})\|_{\bar{L}^2_{A\gamma}}^2\right]\ud s+\sum_{j=0}^{n-1}\int_{t_{j}}^{t_{j+1}}\mathbb{E}\left[\|\bar{u}(t_{j})-\bar{U}^{j}\|_{\bar{L}^2_{A\gamma}}^2\right]\ud s.
\end{align}
In virtue of Lemma \ref{lem:S-I}, for any $s\in(t_{j},t_{j+1}]$ and $l\in\mathbb N_+$,
\begin{align*}
& \mathbb{E}\left[\left\|\left(\bar{S}(t_n-s)-\bar{S}(t_n-t_j)\right)G(\bar{u}(s))(\widehat{\mathfrak{u}_l\mu})^\wedge\right\|^2_{\bar{L}^2_{A\gamma}}\right]\\
&=\mathbb{E}\left[\left\|\left(\bar{S}(s-t_j)-I\right)\bar{S}(t_n-s)G(\bar{u}(s))(\widehat{\mathfrak{u}_l\mu})^\wedge\right\|^2_{\bar{L}^2_{A\gamma}}\right]\\
&\lesssim\tau\mathbb{E}\left[\left\|\partial_z[\bar{S}(t_n-s)G(\bar{u}(s))(\widehat{\mathfrak{u}_l\mu})^\wedge]\right\|^2_{\bar{L}^2_{A^2\gamma}}\right]+\tau\mathbb{E}\left[\left\|\bar{S}(t_n-s)G(\bar{u}(s))(\widehat{\mathfrak{u}_l\mu})^\wedge\right\|^2_{\bar{\mathbb{H}}_{\gamma}}\right].
\end{align*}
Repeating the proof of \eqref{eq:SGu}, one has that 
$\sum_{l=1}^\infty\mathbb{E}[\|\bar{S}(t_n-s)G(\bar{u}(s))(\widehat{\mathfrak{u}_l\mu})^\wedge\|^2_{\bar{\mathbb{H}}_{\gamma}}]\lesssim 1.$
On the other hand, by \eqref{eq:DSGu-1} with $t=t_n$ and $r=s$, \eqref{eq:utHgamma} and Proposition \ref{prop:uD1},
\begin{align*}
&\sum_{l=1}^\infty\mathbb{E}\left[\left\|\partial_z[\bar{S}(t_n-s)G(\bar{u}(s))(\widehat{\mathfrak{u}_l\mu})^\wedge]\right\|^2_{\bar{L}^2_{A^2\gamma}}\right]
\lesssim 1.
\end{align*}
Combining the previous three estimates, we deduce that 
$$\sum_{l=1}^\infty\mathbb{E}\left[\left\|\left(\bar{S}(t_n-s)-\bar{S}(t_n-t_j)\right)G(\bar{u}(s))(\widehat{\mathfrak{u}_l\mu})^\wedge\right\|^2_{\bar{L}^2_{A\gamma}}\right]\lesssim \tau.$$
In the similar manner, it can be also verified that
$$\mathbb{E}\left[\left\|\left(\bar{S}(t_n-s)-\bar{S}(t_n-t_j)\right)B(\bar{u}(s))\right\|^2_{\bar{L}^2_{A\gamma}}\right]\lesssim \tau.$$
Hence, by Proposition \ref{eq:baru-Holder} and \eqref{eq:u-Ubar}, we derive that
\begin{align*}
\mathbb{E}\left[\|\bar{u}(t_n)-\bar{U}^{n}\|^2_{\bar{L}^2_{A\gamma}}\right]\le C\tau+C\tau\sum_{j=0}^{n-1}\mathbb{E}\left[\|\bar{u}(t_{j})-\bar{U}^{j}\|_{\bar{L}^2_{A\gamma}}^2\right],
\end{align*}
and then using the discrete Gronwall inequality finishes the proof.
\end{proof}

 \subsection{AP property of exponential Euler approximation}

In view of  Theorem \ref{theo:MS-1D}, it is natural to 
 consider the weighted $L^2$-space $\mathbb{H}_{A\gamma}:=L^2(\R^2,A(H(x))\gamma(H(x))\ud x)$. As a result, one can obtain that
$$
\mathbb{E}[\|\bar{u}(\mathsf{T})^\vee-(\bar{U}^{N})^\vee\|^2_{\mathbb{H}_{A\gamma}}]=\E\int_{\R^2}|\bar{u}(\mathsf{T})^\vee(x)-(\bar{U}^{N})^\vee(x)|^2A(H(x))\gamma(H(x))\ud x\lesssim\tau,
$$
since $\|f\|_{\bar{L}^2_{A\gamma}}\lesssim\|f^\vee\|_{\mathbb{H}_{A\gamma}}\lesssim\|f\|_{\bar{L}^2_{A\gamma}}$ for any $f\in\bar{L}^2_{A\gamma}$ due to \eqref{eq:T}. 
 This motivates us to verify the asymptotical behavior \eqref{eq:UNep-u} in  $L^2(\Omega,\mathbb{H}_{A\gamma})$, as the following theorem shows.

\begin{tho}\label{tho:AP}
Let $b,g$ be continuously differentiable with bounded derivatives, \eqref{eq:Tneq0},
 \eqref{eq:noise}, and  Assumptions \ref{Asp:zeta}-\ref{Asp:gamma-S}  hold.
Assume that $\psi^{\wedge}\in \bar{\mathbb{H}}_{\sqrt\gamma}$ and
 $(\psi^{\wedge})^{\prime}\in \bar{L}_{A^2\gamma}^2$. Then
 \begin{gather}\label{eq:AP}
\lim_{N\to\infty}\lim_{\epsilon\to 0}U_\epsilon^N
=\bar{u}(\mathsf T)^{\vee}\quad \text{in } L^2(\Omega,\mathbb{H}_{A\gamma}). 
 \end{gather}
 If in addition $\psi=(\psi^{\wedge})^{\vee}$ and $(\psi^{\wedge})^{\prime}\in \bar{L}_{A\gamma}^2$, then
  \begin{gather}\label{eq:AP1}
\lim_{\epsilon\to 0}\lim_{N\to\infty}U_\epsilon^N=\bar{u}(\mathsf T)^\vee\quad \text{in } L^2(\Omega,\mathbb{H}_{\gamma}).
 \end{gather}

\end{tho}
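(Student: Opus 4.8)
The plan is to read Theorem~\ref{tho:AP} as the two ways of closing the commutative diagram in Fig.~\ref{fig.doubleDiagram}: each identity is obtained by composing two of the convergence results already established, and the only real work is to identify the intermediate limit object, to verify the hypotheses of the invoked theorem, and to translate the resulting bounds between the weighted spaces $\mathbb{H}_\gamma$, $\mathbb{H}_{A\gamma}$ and $\bar{L}^2_{A\gamma}$. Throughout I will use the norm equivalence $\|f\|_{\bar{L}^2_{A\gamma}}\lesssim\|f^\vee\|_{\mathbb{H}_{A\gamma}}\lesssim\|f\|_{\bar{L}^2_{A\gamma}}$, which is a consequence of the two-sided bound \eqref{eq:T} on $T$, together with $\|f^\vee\|_{\mathbb{H}_\gamma}=\|f\|_{\bar{\mathbb{H}}_\gamma}$ from \eqref{fvarphi}.

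For \eqref{eq:AP} I would traverse the top edge followed by the right edge of the diagram. Fixing $N$ (equivalently $\tau=\mathsf{T}/N$), Theorem~\ref{theo:asy} with $p=2$ gives $\lim_{\epsilon\to0}\E[\|U_\epsilon^N-(\bar U^N)^\vee\|_{\mathbb{H}_\gamma}^2]=0$, so the inner limit is the random field $(\bar U^N)^\vee$. This object lies in $\bar{L}^2_{A\gamma}$: indeed $\bar u(\mathsf{T})\in\bar{L}^2_{A\gamma}$ by \eqref{eq:utHgamma} and the embedding $\|\cdot\|_{\bar{L}^2_{A\gamma}}\lesssim\|\cdot\|_{\bar{\mathbb{H}}_{\sqrt\gamma}}$, while Theorem~\ref{theo:MS-1D} controls $\bar u(\mathsf{T})-\bar U^N$ in $\bar{L}^2_{A\gamma}$; hence the outer convergence may legitimately be measured in $\mathbb{H}_{A\gamma}$. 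Applying Theorem~\ref{theo:MS-1D} at $n=N$ yields $\E[\|\bar u(\mathsf{T})-\bar U^N\|_{\bar{L}^2_{A\gamma}}^2]\lesssim\tau=\mathsf{T}/N$, and the norm equivalence upgrades this to $\E[\|\bar u(\mathsf{T})^\vee-(\bar U^N)^\vee\|_{\mathbb{H}_{A\gamma}}^2]\lesssim\mathsf{T}/N\to0$ as $N\to\infty$, which is \eqref{eq:AP}. (Here Theorem~\ref{theo:asy} is used through its hypothesis \eqref{eq:StauG}, which holds in particular when $\psi=(\psi^\wedge)^\vee$.)

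For \eqref{eq:AP1} the order of limits is reversed, so I would instead traverse the left edge followed by the bottom edge. The first step is to check that the extra hypotheses are exactly what Theorem~\ref{theo:MS-2D} requires: $\psi=(\psi^\wedge)^\vee$ with $\psi^\wedge\in\bar{\mathbb{H}}_{\sqrt\gamma}$ gives $\|\psi\|_{\mathbb{H}_{\sqrt\gamma}}=\|\psi^\wedge\|_{\bar{\mathbb{H}}_{\sqrt\gamma}}<\infty$, while the radial identity $\nabla\psi=(\psi^\wedge)'(H)\nabla H$ and the coarea computation $\oint_{C(z)}|\nabla H|\,\ud l_z=2A(z)$ give $\|\nabla\psi\|_{\mathbb{H}_\gamma}^2=2\|(\psi^\wedge)'\|_{\bar{L}^2_{A\gamma}}^2<\infty$. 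With $\psi\in\mathbb{H}_{\sqrt\gamma}$ and $\nabla\psi\in\mathbb{H}_\gamma$ in hand, Theorem~\ref{theo:MS-2D} gives, for each fixed $\epsilon$, $\E[\|u_\epsilon(\mathsf{T})-U_\epsilon^N\|_{\mathbb{H}_\gamma}^2]\lesssim\tau(1+\epsilon^{-2})=(\mathsf{T}/N)(1+\epsilon^{-2})\to0$ as $N\to\infty$, so the inner limit is $u_\epsilon(\mathsf{T})$; and \cite[Theorem~5.3]{CF19}, i.e.\ \eqref{eq:u-u} evaluated at $t=\mathsf{T}$ (applicable since $\psi\in\mathbb{H}_{\sqrt\gamma}\subset\mathbb{H}_\gamma$), gives $\lim_{\epsilon\to0}\E[\|u_\epsilon(\mathsf{T})-\bar u(\mathsf{T})^\vee\|_{\mathbb{H}_\gamma}^2]=0$. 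Composing the two produces \eqref{eq:AP1} in $L^2(\Omega,\mathbb{H}_\gamma)$.

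The main obstacle is not any individual estimate—each edge is furnished by an earlier theorem—but the bookkeeping of spaces forced by the non-interchangeability of the two limits. The factor $\epsilon^{-2}$ in Theorem~\ref{theo:MS-2D} blows up as $\epsilon\to0$, so the convergence $U_\epsilon^N\to u_\epsilon(\mathsf{T})$ is only available at fixed $\epsilon$; this is why \eqref{eq:AP1} must send $N\to\infty$ first and can remain in $\mathbb{H}_\gamma$. In the opposite order the $\epsilon\to0$ limit (Theorem~\ref{theo:asy}) is available only in $\mathbb{H}_\gamma$, whereas the subsequent $N\to\infty$ convergence of $\bar U^N$ to $\bar u(\mathsf{T})$ is controllable only in the vertex-degenerate space $\bar{L}^2_{A\gamma}$, since $\bar{\mathcal L}$ fails to be uniformly elliptic at $0$. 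Because $\mathbb{H}_\gamma$ and $\mathbb{H}_{A\gamma}$ are not comparable (the weight $A\gamma\simeq z\gamma$ is heavier at infinity but vanishes at the vertex), the delicate point is to argue that the inner $\mathbb{H}_\gamma$-limit nonetheless pins down the object $(\bar U^N)^\vee$ uniquely, after which the outer convergence is correctly quantified in $\mathbb{H}_{A\gamma}$.
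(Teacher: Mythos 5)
Your treatment of \eqref{eq:AP1} is correct and coincides with the paper's own argument: the same verification that $\psi=(\psi^\wedge)^\vee$, $\psi^\wedge\in\bar{\mathbb{H}}_{\sqrt\gamma}$, $(\psi^\wedge)'\in\bar{L}^2_{A\gamma}$ translate (via the radial identity $\nabla\psi=(\psi^\wedge)'(H)\nabla H$ and $\oint_{C(z)}|\nabla H|\,\ud l_z=2A(z)$) into the hypotheses $\psi\in\mathbb{H}_{\sqrt\gamma}$, $\nabla\psi\in\mathbb{H}_\gamma$ of Theorem \ref{theo:MS-2D}, followed by composition with \eqref{eq:u-u} at $t=\mathsf T$, all in $\mathbb{H}_\gamma$.

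For \eqref{eq:AP}, however, there is a genuine gap, and it sits exactly at the point you defer as ``the delicate point.'' You obtain the inner limit $U_\epsilon^N\to(\bar U^N)^\vee$ only in $L^2(\Omega,\mathbb{H}_\gamma)$, but the statement \eqref{eq:AP} asserts an iterated limit in $L^2(\Omega,\mathbb{H}_{A\gamma})$, so the inner limit must exist \emph{in that space}. Since, as you yourself observe, the weights are not comparable ($A\gamma/\gamma\simeq A(z)\simeq z$ is unbounded at infinity), convergence in $\mathbb{H}_\gamma$ gives no control whatsoever on $\|U_\epsilon^N-(\bar U^N)^\vee\|_{\mathbb{H}_{A\gamma}}$; identifying the limit ``uniquely'' in an incomparable topology does not produce convergence in $\mathbb{H}_{A\gamma}$, and no argument you give supplies the missing tail control at infinity. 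The paper closes precisely this hole by applying Theorem \ref{theo:asy} with $\gamma$ replaced by $\sqrt\gamma$ — legitimate because Assumption \ref{Asp:gamma-S} stipulates that Assumption \ref{Asp:gamma} and \eqref{eq:IkTk} hold for $\sqrt\gamma$ — which yields $\lim_{\epsilon\to0}\E\left[\|U_\epsilon^N-(\bar U^N)^\vee\|^2_{\mathbb{H}_{\sqrt\gamma}}\right]=0$, and then uses the embedding $\|\cdot\|_{\mathbb{H}_{A\gamma}}\lesssim\|\cdot\|_{\mathbb{H}_{\sqrt\gamma}}$, valid because $A\sqrt\gamma$ is bounded (from the standing hypothesis that $z^2\vartheta(z)$ is bounded, so $A(z)\sqrt{\vartheta(z)}\lesssim z\sqrt{\vartheta(z)}=\sqrt{z^2\vartheta(z)}\lesssim1$). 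With the inner limit thus secured in $\mathbb{H}_{A\gamma}$, your remaining steps — Theorem \ref{theo:MS-1D} at $n=N$, the norm equivalence $\|f\|_{\bar{L}^2_{A\gamma}}\simeq\|f^\vee\|_{\mathbb{H}_{A\gamma}}$ from \eqref{eq:T}, and the triangle inequality — complete the proof exactly as in the paper.
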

\begin{proof}
 Under Assumption \ref{Asp:gamma-S},
  Assumption \ref{Asp:gamma} holds with $\gamma$ replaced by $\sqrt{\gamma}$, and thus we can apply \cite[Theorem 5.3]{CF19} and
Theorem \ref{theo:asy} to obtain 
\begin{equation}\label{eq:sqrtgamma}
\lim_{\epsilon\to 0}\E\left[\|u_{\epsilon}(\mathsf T)-(\bar u(\mathsf T))^{\vee}\|_{\mathbb{H}_{\sqrt\gamma}}^2\right]=0,
\quad \text{and}\quad
\lim_{\epsilon\to 0}\E\left[\|U_{\epsilon}^{N}-(\bar U^{N})^{\vee}\|_{\mathbb{H}_{\sqrt\gamma}}^2\right]=0,
\end{equation}
respectively.
Since $A\sqrt{\gamma}$ is bounded on $[0,\infty)$, one has
$\|\cdot\|_{\mathbb{H}_{A\gamma}}\lesssim\|\cdot\|_{\mathbb{H}_{\sqrt{\gamma}}}$.
Note that
\begin{align*}
\lim_{\epsilon\to0}\E\left[\|U_{\epsilon}^{N}-\bar u(\mathsf T)^\vee\|^2_{\mathbb{H}_{A\gamma}}\right]
&\lesssim\lim_{\epsilon\to0}\E\left[\|U_{\epsilon}^{N}-(\bar U^{N})^{\vee}\|_{\mathbb{H}_{\sqrt\gamma}}^2\right]+\E\left[\|\bar{u}(\mathsf{T})^\vee-(\bar{U}^{N})^\vee\|^2_{\mathbb{H}_{A\gamma}}\right]\\
&\lesssim\E\left[\|\bar{u}(\mathsf{T})^\vee-(\bar{U}^{N})^\vee\|^2_{\mathbb{H}_{A\gamma}}\right]\lesssim\tau,
\end{align*}
for any $N\ge1$,
which yields  \eqref{eq:AP} by taking $N=\mathsf{T}/\tau\to\infty$.

For any radial $\varphi:\R^2\to\R$, 
$$
\|\varphi\|^2_{\mathbb{H}_{\sqrt\gamma}}
=\int_0^\infty\oint_{C(z)}\frac{|\varphi^\wedge(H(x))|^2\sqrt{\gamma(H(x))}}{|\nabla H(x)|}\ud l_z\ud z
=\|\varphi^\wedge\|^2_{\bar{\mathbb{H}}_{\sqrt\gamma}}.
$$
Similarly, for any radial $\varphi:\R^2\to\R$, by $\nabla\varphi(x)=\nabla\varphi^{\wedge}(H(x))=(\varphi^{\wedge})^\prime(H(x))\nabla H(x)$, 
\begin{align*}
\|\nabla\varphi\|^2_{\mathbb{H}_{\gamma}}
&=\int_0^\infty\oint_{C(z)}\frac{|(\varphi^{\wedge})^\prime(H(x))\nabla H(x)|^2{\gamma(H(x))}}{|\nabla H(x)|}\ud l_z\ud z=2\|(\varphi^\wedge)^\prime\|^2_{\bar{L}^2_{A\gamma}}.
\end{align*}
The assumptions $\psi^{\wedge}\in \bar{\mathbb{H}}_{\sqrt\gamma}$ and $(\psi^{\wedge})^{\prime}\in \bar{L}_{A\gamma}^2$ ensure that $\psi\in\mathbb{H}_{\sqrt\gamma}$ and $\nabla\psi\in\mathbb{H}_{\gamma}$.
Hence one can apply Theorem \ref{theo:MS-2D} to deduce that for any $\epsilon\in(0,1]$,
 \begin{align*}
\lim_{N\to \infty}\E\left[\|U_{\epsilon}^{N}-(\bar u(\mathsf T))^{\vee}\|_{\mathbb{H}_\gamma}^2\right]
&\lesssim\E\left[\|u_{\epsilon}(\mathsf T)-(\bar u(\mathsf T))^{\vee}\|_{\mathbb{H}_{\gamma}}^2\right]+\lim_{N\to \infty}\mathbb{E}\left[\|u_{\epsilon}(\mathsf{T})-U_{\epsilon}^{N}\|^2_{\mathbb{H}_{\gamma}}\right]\\
&\le\E\left[\|u_{\epsilon}(\mathsf T)-(\bar u(\mathsf T))^{\vee}\|_{\mathbb{H}_{\gamma}}^2\right],
\end{align*}
which along with  \eqref{eq:u-u}  
 proves \eqref{eq:AP1}.
\end{proof}

 Theorem \ref{tho:AP} reveals the AP property of the exponential Euler approximation \eqref{eq:EEM} for \eqref{eq:SPDE}, i.e., as the discretization parameter $N=\mathsf T/\tau$ tends to infinity,
 the fast advection asymptotical limit of the exponential Euler approximation \eqref{eq:EEM}  is consistent with that of the solution to \eqref{eq:SPDE}.

 \begin{coro}\label{coro:AP}
 Let $b,g$ be continuously differentiable with bounded derivatives, and $\gamma$  given by Example \ref{Ex:gamma-S1} with $\lambda>4$ or Example \ref{Ex:gamma-S}. Assume that \eqref{eq:Tneq0},
 \eqref{eq:noise} and  Assumption \ref{Asp:zeta}  hold. If 
 $\psi=(\psi^{\wedge})^{\vee}$, where $\int_0^\infty|\psi^{\wedge}(z)|^2\gamma^{\frac14}(z)\ud z<\infty$ and
 $(\psi^{\wedge})^{\prime}\in \bar{L}_{A^2\gamma}^2\cap \bar{L}_{A\sqrt{\gamma}}^2$, then the following limits exchange
  \begin{gather*}
\lim_{N\to\infty}\lim_{\epsilon\to 0}U_\epsilon^N=\bar{u}(\mathsf T)^\vee=\lim_{\epsilon\to 0}\lim_{N\to\infty}U_\epsilon^N\quad \text{in }  L^2(\Omega,\mathbb{H}_{A\gamma}).
 \end{gather*}
 \end{coro}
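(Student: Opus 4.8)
The plan is to deduce the statement from Theorem~\ref{tho:AP} by exhibiting \emph{both} iterated limits in the single space $L^2(\Omega,\mathbb{H}_{A\gamma})$. The forward limit is immediate: for $\gamma$ as in Example~\ref{Ex:gamma-S1} with $\lambda>4$ (or Example~\ref{Ex:gamma-S}) the weight $\gamma$ satisfies Assumption~\ref{Asp:gamma-S}, and since $\sqrt{\vartheta(z)}\lesssim\vartheta^{1/4}(z)$ for large $z$ the hypothesis $\int_0^\infty|\psi^\wedge(z)|^2\gamma^{1/4}(z)\,\ud z<\infty$ forces $\psi^\wedge\in\bar{\mathbb{H}}_{\sqrt\gamma}$; as $(\psi^\wedge)'\in\bar{L}^2_{A^2\gamma}$ is assumed, all hypotheses of Theorem~\ref{tho:AP} hold and \eqref{eq:AP} already gives $\lim_{N\to\infty}\lim_{\epsilon\to0}U_\epsilon^N=\bar u(\mathsf T)^\vee$ in $L^2(\Omega,\mathbb{H}_{A\gamma})$.

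The substance lies in upgrading the reverse limit, which Theorem~\ref{tho:AP} supplies only in $L^2(\Omega,\mathbb{H}_\gamma)$ via \eqref{eq:AP1}, to $L^2(\Omega,\mathbb{H}_{A\gamma})$. One cannot simply repeat the argument with $A\gamma$ as weight: by Lemma~\ref{lem:TFA} one has $A(0)=0$, so $A\gamma$ vanishes at the vertex $z=0$ and violates the strict positivity demanded in Assumption~\ref{Asp:gamma}, which makes Theorem~\ref{theo:MS-2D} and \eqref{eq:u-u} inapplicable with that weight. The remedy I would use is to interpose the weight $\sqrt\gamma$. Precisely for $\lambda>4$ the weight $\sqrt\gamma$ \emph{itself} satisfies Assumption~\ref{Asp:gamma-S}: this is where the sharpened exponent is needed, since $\lambda>4$ is exactly what makes $z^2\sqrt{\vartheta(z)}$ bounded and the next-order weight $\gamma^{1/4}$ integrable in the sense of \eqref{eq:IkTk} (indeed $\int_0^\infty\vartheta^{1/4}(z)T(z)\,\ud z<\infty$ iff $\lambda/4>1$).

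With $\sqrt\gamma$ admissible I would re-run the proof of \eqref{eq:AP1} with $\gamma$ replaced by $\sqrt\gamma$. Since $\psi=(\psi^\wedge)^\vee$ is radial, the radial identities established in the proof of Theorem~\ref{tho:AP}, now read with weight $\sqrt\gamma$ resp.\ $\gamma^{1/4}$, namely $\|\psi\|^2_{\mathbb{H}_{\gamma^{1/4}}}=\|\psi^\wedge\|^2_{\bar{\mathbb{H}}_{\gamma^{1/4}}}$ and $\|\nabla\psi\|^2_{\mathbb{H}_{\sqrt\gamma}}=2\|(\psi^\wedge)'\|^2_{\bar{L}^2_{A\sqrt\gamma}}$, convert the hypotheses $\int_0^\infty|\psi^\wedge|^2\gamma^{1/4}\,\ud z<\infty$ and $(\psi^\wedge)'\in\bar{L}^2_{A\sqrt\gamma}$ into $\psi\in\mathbb{H}_{\gamma^{1/4}}$ and $\nabla\psi\in\mathbb{H}_{\sqrt\gamma}$, which are exactly the initial-data requirements of Theorem~\ref{theo:MS-2D} for the weight $\sqrt\gamma$. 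That theorem then yields $\mathbb{E}[\|u_\epsilon(\mathsf T)-U_\epsilon^N\|^2_{\mathbb{H}_{\sqrt\gamma}}]\lesssim\tau(1+\epsilon^{-2})$, which tends to $0$ as $N\to\infty$ for each fixed $\epsilon$, so $\lim_{N\to\infty}U_\epsilon^N=u_\epsilon(\mathsf T)$ in $L^2(\Omega,\mathbb{H}_{\sqrt\gamma})$. Combining this with the first identity in \eqref{eq:sqrtgamma} (which is \eqref{eq:u-u} for the admissible weight $\sqrt\gamma$) through the triangle inequality as in the proof of Theorem~\ref{tho:AP} gives
\begin{align*}
\lim_{\epsilon\to0}\lim_{N\to\infty}\mathbb{E}\big[\|U_\epsilon^N-\bar u(\mathsf T)^\vee\|^2_{\mathbb{H}_{\sqrt\gamma}}\big]=0.
\end{align*}

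Finally, because $z\sqrt{\vartheta(z)}=\sqrt{z^2\vartheta(z)}$ is bounded by Assumption~\ref{Asp:gamma-S} and $A(z)\lesssim z$ by Lemma~\ref{lem:TFA}, the product $A\sqrt\gamma$ is bounded on $[0,\infty)$, whence $A\gamma=(A\sqrt\gamma)\sqrt\gamma\lesssim\sqrt\gamma$ and $\|\cdot\|_{\mathbb{H}_{A\gamma}}\lesssim\|\cdot\|_{\mathbb{H}_{\sqrt\gamma}}$. This transfers the reverse limit to $L^2(\Omega,\mathbb{H}_{A\gamma})$, and together with \eqref{eq:AP} shows that both iterated limits equal $\bar u(\mathsf T)^\vee$ there, proving the exchange. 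I expect the only delicate point to be bookkeeping rather than analysis: verifying that $\sqrt\gamma$ and its sub-weight $\gamma^{1/4}$ satisfy every clause of Assumption~\ref{Asp:gamma-S} when $\lambda>4$, and that the radial identities correctly match the initial-data hypotheses. No genuinely new estimate is required, since all analytic content is already carried by Theorems~\ref{theo:MS-2D} and \ref{tho:AP} and by \eqref{eq:u-u}.
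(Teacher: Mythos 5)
Your proposal is correct and follows essentially the same route as the paper: the paper's proof likewise observes (via Remark \ref{rem:A3}) that for $\lambda>4$ Assumption \ref{Asp:gamma-S} holds with $\gamma$ replaced by $\sqrt\gamma$, applies Theorem \ref{theo:MS-2D} with weight $\sqrt\gamma$ (using exactly the hypotheses $\int_0^\infty|\psi^{\wedge}|^2\gamma^{1/4}\,\ud z<\infty$ and $(\psi^{\wedge})^{\prime}\in\bar{L}^2_{A\sqrt\gamma}$), combines this with the first equality in \eqref{eq:sqrtgamma} to get the reverse iterated limit, transfers it to $L^2(\Omega,\mathbb{H}_{A\gamma})$, and invokes \eqref{eq:AP} for the forward limit. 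Your write-up merely makes explicit the bookkeeping (the radial identities and the bound $A\gamma\lesssim\sqrt\gamma$) that the paper leaves implicit.
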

 \begin{proof}
 By Remark \ref{rem:A3}, if $\gamma$ is given by Example \ref{Ex:gamma-S1} with $\lambda>4$ or Example \ref{Ex:gamma-S}, then Assumption \ref{Asp:gamma-S} also holds with $\gamma$ replaced by $\sqrt\gamma$. As a result, one can applying Theorem \ref{theo:MS-2D}  with $\gamma$ replaced by $\sqrt\gamma$ to deduce that for any $\epsilon>0$, $$\lim_{N\to \infty}\mathbb{E}\left[\|u_{\epsilon}(\mathsf{T})-U_{\epsilon}^{N}\|^2_{\mathbb{H}_{\sqrt\gamma}}\right]=0,$$
 thanks to $\int_0^\infty|\psi^{\wedge}(z)|^2\gamma^{\frac14}(z)\ud z<\infty$ and $(\psi^{\wedge})^{\prime}\in  \bar{L}_{A\sqrt{\gamma}}^2$. The above equality together with the first equality in \eqref{eq:sqrtgamma} yields $\lim_{\epsilon\to 0}\lim_{N\to\infty}U_\epsilon^N=\bar{u}(\mathsf T)^\vee$ in $L^2(\Omega,\mathbb{H}_{A\gamma})$. Finally taking \eqref{eq:AP} into account finally completes the proof.
 \end{proof}

\section{Numerical experiments}\label{S6}

In this section, we implement several numerical experiments to verify our main theoretic analysis, including the mean square convergence analysis for a fixed $\epsilon>0$
and the asymptotic behavior of the exponential Euler approximations in the small $\epsilon$ limit. In the numerical experiments, we take $b\equiv0$ and focus on
 $H(x)=|x|^2+\zeta(|x|^2)$
 with $\zeta(z)=1-e^{-\frac{1}{2}z}$ or $\zeta(z)=\sqrt{1+z}-1$ for $z\in[0,\infty)$.
   \begin{enumerate}
\item[(1)] For $\zeta(z)=1-e^{-\frac{1}{2}z}$, we can explicitly compute that $\zeta^\prime(z)=\frac{1}{2}e^{-\frac{1}{2}z}$, $\zeta^{\prime\prime}(z)=-\frac14e^{-\frac{1}{2}z}$ and 
$F(z)=(Id+\zeta)^{-1}(z)=
 z+2\textup{W}_0(\frac12{\mathrm{e}}^{\frac{1}{2}-\frac{z}{2}})-1$, $z\in[0,\infty)$, where $\textup{W}_0$ is the principal branch of the Lambert W function, i.e., $w=\textup{W}_0(z)$ if and only if $we^w=z$. 
 \item[(2)] For $\zeta(z)=\sqrt{1+z}-1$, it holds that $\zeta^\prime(z)=\frac{1}{2}(1+z)^{-\frac12}$, $\zeta^{\prime\prime}(z)=-\frac{1}{4}(1+z)^{-\frac32}$, and $F(z)=z-\frac{1}{2}\sqrt{4z+9}+\frac32$.
 \end{enumerate}
 
\subsection{Convergence orders}\label{Sec:5.1}
 In order to numerically solve 
\eqref{eq:SPDE},
we first truncate the domain $\R^2$ to $[-L,L]\times[-L,L]$ and endow the truncated problem 
\begin{equation}\label{eq:SPDE-trun}
\partial_t \textbf{u}_{\epsilon}(t,x)=\frac{1}{2} \Delta \textbf{u}_{\epsilon}(t,x)+\frac{1}{\epsilon}\left\langle\nabla^{\perp} H(x), \nabla \textbf{u}_{\epsilon}(t,x)\right\rangle+g(\textbf{u}_{\epsilon}(t,x))\partial_t\mathcal{W}(t,x)
\end{equation}
in $[0,\mathsf T]\times[-L,L]^2$
 with zero Dirichlet boundary condition and the initial value $\textbf{u}_{\epsilon}(0,x)=\psi(x)$. To realize the numerical simulation in computers,
 we combine the exponential Euler approximation \eqref{eq:EEM} 
 with a central finite difference method in space to obtain a fully 
discrete numerical approximation $\{\mathbf{u}^{M,\tau}_{\epsilon,n}\}_{n=0}^N$ for \eqref{eq:SPDE-trun}
 (see Appendix \ref{App:B1} for more details), where $h=L/M$ and $\tau=\mathsf T/N$ are the space and time stepsizes, respectively.
 \begin{figure}[!htb]
\centering
\includegraphics[width=0.8\linewidth]{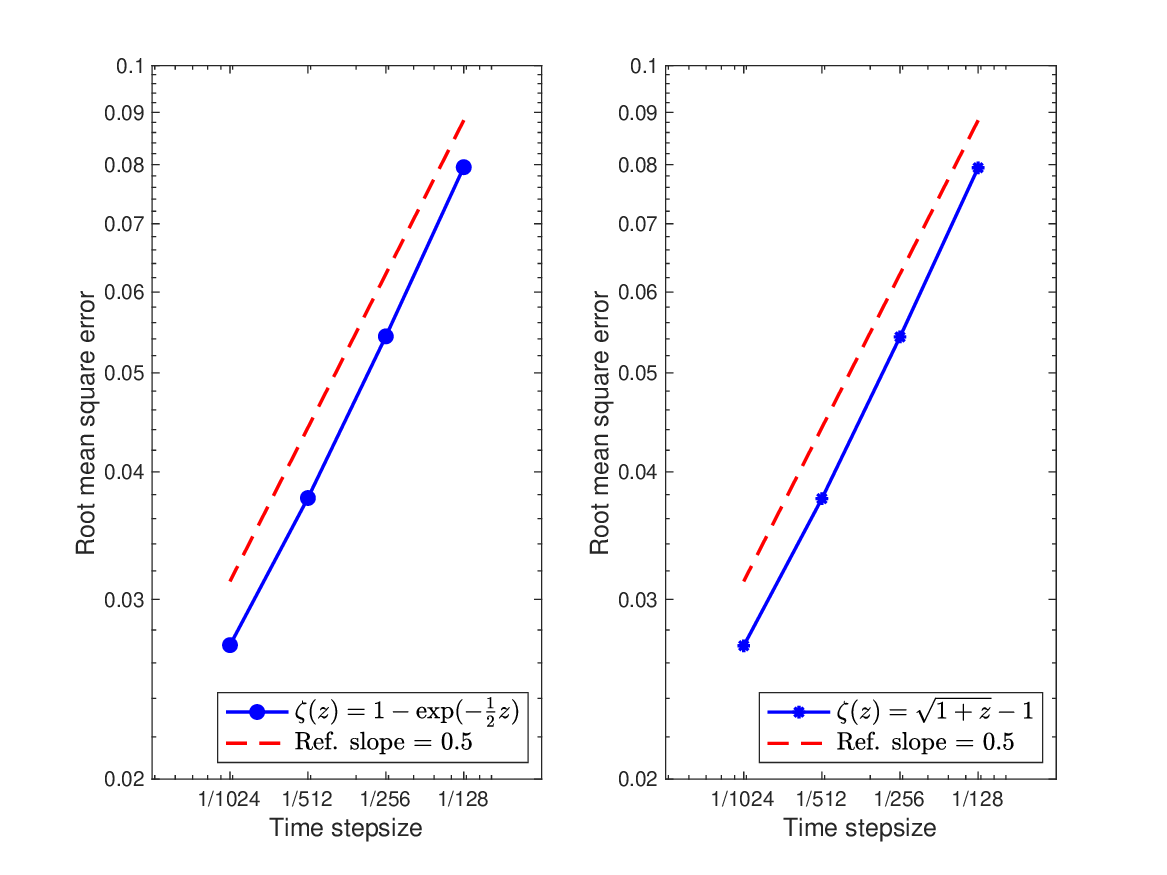}
\caption{Errors of the exponential Euler approximation for \eqref{eq:SPDE-trun} with $L=1$,  $\epsilon=1$,  $h=0.2$, $P=500$.}\label{fig:1}
\end{figure}

\begin{figure}[!htb]
\centering
\includegraphics[width=0.8\linewidth]{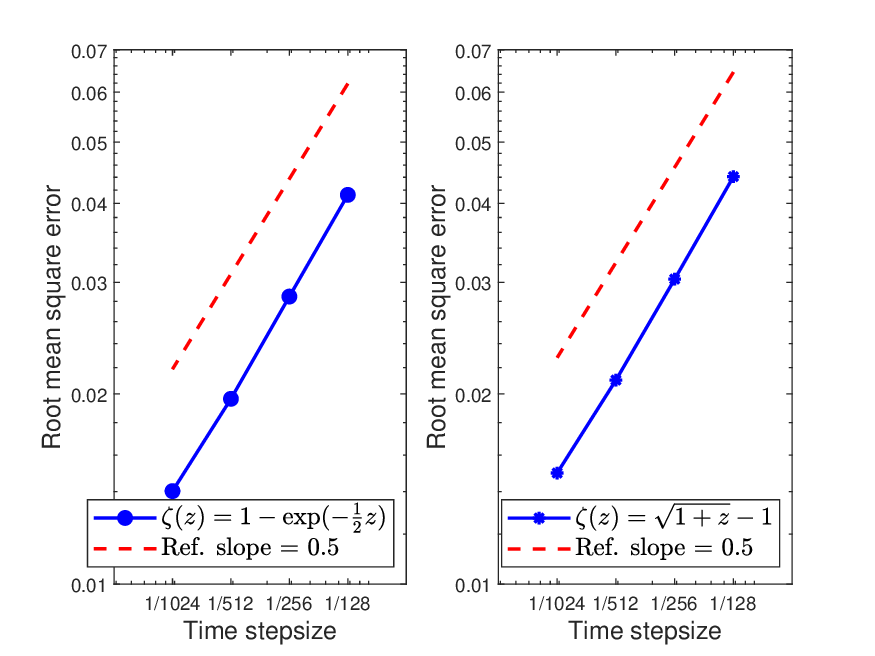}
\caption{ Errors of the exponential Euler approximation for \eqref{eq:utzk-p-trun} with $L=10$, $h=0.1$ and $\textup{P}=1000$.}\label{fig:2}
\end{figure}
As for the numerical simulation of limiting equation \eqref{eq:utzk-p},
we truncate the interval $[0,\infty)$ into $[0,L]$ and 
obtain the associated truncated problem
\begin{equation}\label{eq:utzk-p-trun}
\partial_t \bar{u}(t, z)=\frac{A(z)}{T(z)} \partial_{zz}\bar{u}(t, z)+\frac{A^\prime(z)}{T(z)}\partial_z\bar{u}(t, z)+g(\bar{u}(t, z)) \partial_t \bar{\mathcal{W}}(t, z)
\end{equation}
in $[0,\mathsf T]\times(0,L]$
with $\bar{u}(0, z)=\psi^{\wedge}(z)$ and the boundary conditions
$\frac{A(0)}{T(0)}\bar{u}^{\prime\prime}(t,0)=0$ (see e.g., \cite{AM13}) and $\bar{u}(t,L)=0$.  A fully discrete numerical approximation $\{\bar{u}_{n}^{M,\tau}\}_{n=0}^N$
 of \eqref{eq:utzk-p-trun} combining the exponential Euler approximation \eqref{eq:Un} and a finite difference method is presented in Appendix \ref{App:B2}. 
Since the present work mainly focuses on the time discretizations, we don't discuss the domain truncation errors of \eqref{eq:SPDE-trun} and \eqref{eq:utzk-p-trun}, as well as the errors of spatial discretizations here. We hope to investigate these issues in the future.

In order to verify Theorem \ref{theo:MS-2D}, we measure the error
\begin{align*}
\mathbb{E}\sum_{i=1-M}^{M-1}\sum_{j=1-M}^{M-1}\left|\mathbf{u}_\epsilon(\mathsf T,ih,jh)-[\mathbf{u}^{M,\tau}_{\epsilon,N}]_{k(i,j)}\right|^2\gamma(H(ih,jh))h^2,
  \end{align*}
where $[\mathbf{u}^{M,\tau}_{\epsilon,N}]_{k(i,j)}$ denotes the $k(i,j)$th component of $\mathbf{u}^{M,\tau}_{\epsilon,N}$ (see \eqref{eq:kij}).
Since the exact solution of \eqref{eq:SPDE-trun} is unavailable, we measure the mean square error of the numerical solutions via 
\begin{small}
\begin{align}\label{eq:Error2d}
\left(\frac{1}{\textup{P}}\sum_{p=1}^\textup{P}\sum_{i=1-M}^{M-1}\sum_{j=1-M}^{M-1}\left|[\mathbf{u}^{M,\tau_{l+1}}_{\epsilon,N}]_{k(i,j)}(\omega_p)-[\mathbf{u}^{M,\tau_l}_{\epsilon,N}]_{k(i,j)}(\omega_p)\right|^2\gamma(H(ih,jh))h^2\right)^{\frac12}.
  \end{align}
  \end{small}We set $g(u)=\sin(u)$, $\gamma(z)=\exp(-\sqrt{z})$, $\Lambda(x)=\frac{1}{\pi}e^{-|x|^2}$ (the Fourier transform of the spectral measure $\mu$ of $\mathcal W$), and $\psi(x)=\exp(-H(x))$ to measure the error at the endpoint $\mathsf T=1/8$  with different time stepsizes $\tau_l=2^{-(6+l)}$, $l=1,\ldots,5$, and the expectations are approximated by computing averages over $\textup{P}$ samples. In Fig.\ \ref{fig:1}, we depict the mean square convergence error \eqref{eq:Error2d} due to the temporal discretization on a log-log scale, together with the reference line with slope $\frac12$. 
Analogously, to verify Theorem \ref{theo:MS-1D}, we display the error 
\begin{align*}
\left(\frac{1}{\textup{P}}\sum_{p=1}^\textup{P}\sum_{i=0}^{M-1}\left|[\bar{u}^{M,\tau_{l+1}}_{N}]_{i}(\omega_p)-[\bar{u}^{M,\tau_{l}}_{N}]_{i}(\omega_p)\right|^2A(ih)\gamma(ih)h\right)^{\frac12}
  \end{align*}
 due to the temporal discretization on a log-log scale in Fig.\ \ref{fig:2}. Observe that the numerical results in Figs.\ \ref{fig:1}-\ref{fig:2} are consistent with theoretical mean square convergence order $\frac12$ of exponential Euler approximations.

\subsection{Fast advection asymptotics}
 In this part, we verify the asymptotical behavior
 \begin{equation}\label{eq:Un-Un-add}
 \lim_{\epsilon\to 0}\E\left[\|U_{\epsilon}^{N}-(\bar U^{N})^{\vee}\|_{\mathbb{H}_\gamma}^2\right]=0
 \end{equation}
 in the additive nose case ($g\equiv1$) with $\psi=(\psi^\wedge)^\vee.$ 
In this case, the numerical solutions $U^N_\epsilon$ and $\bar{U}^N$  satisfy
 \begin{equation}\label{eq:UNUN}
 U^N_\epsilon=S_\epsilon(t_N) \psi+\sum_{r=0}^{N-1}S_\epsilon(t_N-t_r)\delta\mathcal W_{r}
 \text{ and }
  \bar{U}^N=\bar{S}(t) \psi^\wedge+\sum_{r=0}^{N-1}\bar{S}(t_N-t_r)\delta\bar{\mathcal W}_{r},
 \end{equation}
 respectively.
Let us introduce an implementable numerical approximation for the expectation in \eqref{eq:Un-Un-add}. First,
we truncate the Karhunen--Loève  expansion \eqref{eq:KL} of $\mathcal W$ and approximate
\begin{align*}
\delta\mathcal W_{r}(\cdot)\approx\sum_{l=1}^{M_1}\widehat{\mathfrak{u}_l\mu}(\cdot)\left(\beta_l(t_{r+1})-\beta_l(t_{r})\right),\quad \delta\bar{\mathcal W}_{r}(\cdot)\approx\sum_{l=1}^{M_1}(\widehat{\mathfrak{u}_l\mu})^\wedge(\cdot)\left(\beta_l(t_{r+1})-\beta_l(t_{r})\right)
\end{align*}
in \eqref{eq:UNUN} for $r=0,1,\ldots,N-1$, where $M_1\in\mathbb{N}_+$. Under Assumption \ref{Asp:zeta}, the limiting process $\bar{Y}$ satisfies
\begin{equation}\label{eq:Y}\ud\bar Y(t)=\frac{A^\prime(\bar Y(t))}{T(\bar Y(t))}\ud t+\sqrt{\frac{2A(\bar Y(t))}{T(\bar Y(t))}}\ud \bar{\textup B}(t),\quad t>0,
\end{equation}
where $\bar{\textup B}=\{\bar{\textup B}(t)\}_{t\ge0}$ is some 1-dimensional Brownian motion independent of $\{\textup B(t)\}_{t\ge0}$ in \eqref{eq:Xt}.
\begin{figure}[!htb]
\centering
\includegraphics[width=0.8\linewidth]{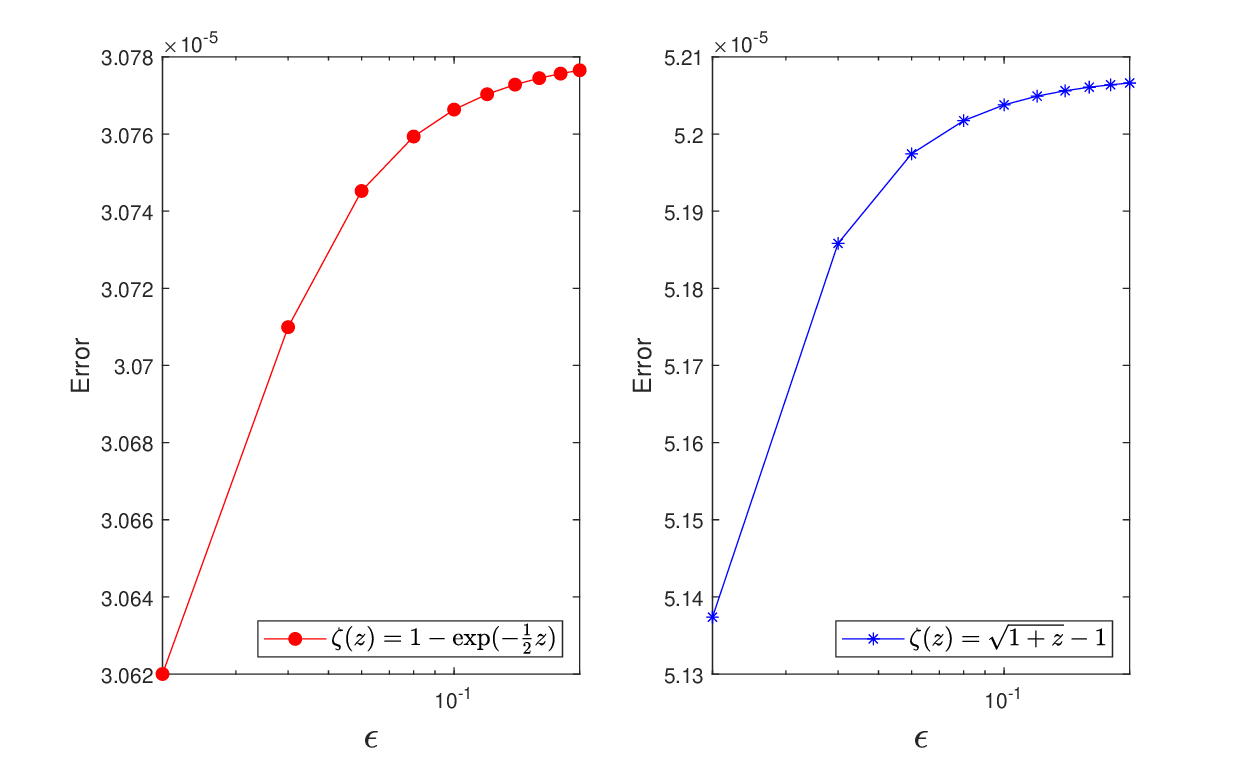}
\caption{The asymptotic error $\E[\|U_{\epsilon}^{n}-(\bar U^{n})^{\vee}\|_{\mathbb{H}_\gamma}^2]$ with $\textup{P}=5000$, $\textup{Q}=100$, $\mathsf T=2^{-13}$, $\tau=2^{-18}$, $h=2$, $M_1=1$, $M_2=5$, $\psi\equiv 0$ and $(\widehat{u_1\mu})^\wedge(z)=10\sin(z)+6z$ against with $\epsilon=0.02:0.02:0.2.$}\label{fig:4}
\end{figure}

For the realization of the semigroups, we note that
\begin{gather*}
S_\epsilon (t)\varphi(x)=\E[\varphi(X^x_\epsilon(t))]
\quad\forall~\varphi\in \mathbb{H}_\gamma\\
(\bar{S}(t)f)^{\vee}(x)=\bar{S}(t)f(H(x))=\E[f(\bar Y^{H(x)}(t))]\quad\forall~ f\in\bar{\mathbb{H}}_\gamma.
\end{gather*}
Here $\{X^x_\epsilon(t)\}_{t>0}$ denotes the solution of \eqref{eq:Xt} starting from $x\in\R^2$, and $\{\bar Y^{z}(t)\}_{t>0}$ denotes the solution of \eqref{eq:Y}
 starting from $z\in[0,\infty)$.  
In practice, 
we generate the numerical solution $\widetilde{Y}(\mathsf{T})$ of $\bar{Y}(\mathsf{T})$ via
\begin{equation*}
\widetilde{Y}(t_{n+1})=\widetilde{Y}(t_{n})+\frac{A^\prime(\widetilde{Y}(t_n))}{T(\widetilde{Y}(t_n))}\tau+\sqrt{\max\left\{\frac{2A(\widetilde{Y}(t_n))}{T(\widetilde{Y}(t_n))},0\right\}} \left(\bar{\textup B}(t_{n+1})-\bar{\textup B}(t_{n})\right),
\end{equation*}
for $n=0,1,\ldots,N$ with $\widetilde{Y}(0)=\bar{Y}(0)$.
We utilize the Euler--Maruyama numerical solution to $\widetilde{X}_\epsilon(t_{N})$ to simulate $X_\epsilon(\mathsf{T})$, i.e.,
\begin{align*}
\widetilde{X}_\epsilon(t_{n+1})=\widetilde{X}_\epsilon(t_{n})+\frac{\tau}{\epsilon} \nabla^{\perp} H(\widetilde{X}_\epsilon(t_{n}))+\textup{B}(t_{n+1})-\textup{B}(t_{n})
\end{align*}
for $n=0,1,\ldots,N-1$
with $\widetilde{X}_\epsilon(0)=X_\epsilon(0)$. 
Collecting the above arguments, we obtain
 \begin{align}\label{eq:UNij}
 U^N_\epsilon(ih,jh)&\approx\E\left[\psi(\widetilde{X}_\epsilon^{(ih,jh)}(t_{N}))\right]\\\notag
 &\quad+\sum_{r=0}^{N-1}\sum_{l=1}^{M_1}\E\left[\widehat{\mathfrak{u}_l\mu}(\widetilde{X}_\epsilon^{(ih,jh)}(t_{N}-t_r))\right]\left(\beta_l(t_{r+1})-\beta_l(t_{r})\right),
\\\label{eq:uNij}
 ( \bar{U}^N)^{\vee}(ih,jh)&\approx\E[ \psi^\wedge(\widetilde{Y}^{H(ih,jh)}(t_N))]\\\notag&\quad+\sum_{r=0}^{N-1}\sum_{l=1}^{M_1}\E\left[(\widehat{\mathfrak{u}_l\mu})^\wedge(\widetilde{Y}^{H(ih,jh)}(t_{N}-t_r))\right]\left(\beta_l(t_{r+1})-\beta_l(t_{r})\right).
 \end{align}
 In practice,
we simulate the expectations in \eqref{eq:UNij} and \eqref{eq:uNij} by the Monte--Carlo simulation with $\textup{P}$ sample paths. 
 In addition, we approximation the space integral in \eqref{eq:Un-Un-add} as follows
 \begin{equation}\label{eq:U-U}
\E\left[\|U_{\epsilon}^{N}-(\bar U^{N})^{\vee}\|_{\mathbb{H}_\gamma}^2\right]\approx\sum_{i,j=1-M_2}^{M_2-1}\E\left[\left|U_{\epsilon}^{N}(ih,jh)-\bar U^{N}(H(ih,jh))\right|^2\right]\gamma(H(ih,jh))h^2,
\end{equation}
where the expectation can be simulated by the Monte--Carlo simulation with $\textup{Q}$ sample paths, and $U_{\epsilon}^{N}(ih,jh)$ and $\bar U^{N}(H(ih,jh))$ are simulated via
 \eqref{eq:UNij} and \eqref{eq:uNij}, respectively. In this way, we obtain a formal implementable numerical approximation for the mean square error between $U^N_\epsilon$ and $\bar{U}^N$. In the present work, we do not pursue
the computational accuracy of the approximation in \eqref{eq:U-U} with respect to the parameters $M_1,M_2,\textup{P},\textup{Q}$, etc. 
The quantity on the right hand side of \eqref{eq:U-U} with respect to $\epsilon$ is displayed in Fig.\ \ref{fig:4}, where $\gamma$ is chosen to be $1$ on the truncated domain $[-M_2h,M_2h]^2$ for simplicity.
It can be observed that the error $\E[\|U_{\epsilon}^{N}-(\bar U^{N})^{\vee}\|_{\mathbb{H}_\gamma}^2]$ is small when $\epsilon$ is small,
which is consistent with Theorem \ref{theo:asy}.


\section{Conclusion}
In this work, we study an asymptotic-preserving exponential Euler approximation for the multiscale stochastic RDA equation \eqref{eq:SPDE}. There are three main ingredients in verifying the AP property of the proposed approximation. 
One is a strong convergence error bound  of the exponential Euler approximation \eqref{eq:EEM}  which depends on $\frac 1{\epsilon}$ at most polynomially. The second one is the consistency on fast advection asymptotics between exponential Euler approximations  \eqref{eq:EEM} and \eqref{eq:Un}.
The last is the regularity estimate of SPDE on graph \eqref{eq:utzk} and the strong error estimate of \eqref{eq:Un}. There are still many interesting problems to be solved, and we list some possible aspects for future work. 

For instance, in the study of AP property (see section \ref{S5}), we need a technique assumption that the Hamiltonian $H$ possesses a unique critical point. Delving into the broader context where $H$ encompasses multiple critical points is more challenging. 
Besides, to simulate \eqref{eq:utzk}  in practice, one may discretize the continuous graph $\Gamma$ into a discrete one. This may lead to an SPDE on a discrete graph with possible singularity on the vertexes.
Further study on the properties and convergence of the full discretizations of SPDEs on graph, like \eqref{eq:utzk},  are also needed.

\begin{appendices}
\section{Proof of Theorem \ref{theo:asy}}\label{Sec:A3}
The proof is inspired by that of \cite[Theorem 5.2]{CF19}. 
For the sake of simplicity, we only prove the case that $b\equiv 0$ since the proof for the case $b\neq 0$ is analogous and tedious. 
Similarly to \eqref{eq:U1-bU1}, by \eqref{eq:Un} and 
 \eqref{eq:EEM}, for any $2\le n\le N$,
\begin{align}\label{eq:UU}
U_{\epsilon}^{n}-(\bar U^{n})^{\vee}&= \underbrace{S_\epsilon(t_n-t_1)U_{\epsilon}^{1}-\bar{S}(t_n-t_1)^\vee (\bar{U}^{1})^\vee}_{=:J_{\epsilon,n,1}}\\\notag
&\quad+\underbrace{\int_{t_1}^{t_n}S_\epsilon(t_n- \lfloor s\rfloor) \big(G(U_\epsilon^{\frac{\lfloor s\rfloor}{\tau}})-G((\bar{U}^{\frac{\lfloor s\rfloor}{\tau}})^\vee)\big)\ud\mathcal{W}(s)}_{J_{\epsilon,n,2}}\\\notag
&\quad+\underbrace{\int_{t_1}^{t_n}\big(S_\epsilon(t_n- \lfloor s\rfloor) -\bar{S}(t_n- \lfloor s\rfloor)^\vee \big)G((\bar{U}^{\frac{\lfloor s\rfloor}{\tau}})^\vee)\ud\mathcal{W}(s)}_{J_{\epsilon,n,3}}.
\end{align}
For any $2\le N_1\le N$, by virtue of \eqref{eq:Seps}, for any $p\ge2$,
\begin{align*}
\E\left[\sup_{2\le n\le N_1}\|J_{\epsilon,n,1}\|_{\mathbb{H}_\gamma}^p\right]&\lesssim 
\E\left[\sup_{2\le n\le N_1}\|S_\epsilon(t_n-t_1)\left(U_{\epsilon}^{1}-(\bar{U}^{1})^\vee\right)\|_{\mathbb{H}_\gamma}^p\right]\\
&\quad +\E\left[\sup_{2\le n\le N_1}\|\left(S_\epsilon(t_n-t_1)-\bar{S}(t_n-t_1)^\vee\right) (\bar{U}^{1})^\vee\|_{\mathbb{H}_\gamma}^p\right]\\
&\lesssim 
\E\left[\|U_{\epsilon}^{1}-(\bar{U}^{1})^\vee\|_{\mathbb{H}_\gamma}^p\right] +\E\left[\sup_{s\in[\tau,\mathsf T]}\|(S_\epsilon(s)-\bar{S}(s)^\vee) (\bar{U}^{1})^\vee\|_{\mathbb{H}_\gamma}^p\right].
\end{align*}
Since by \eqref{eq:St}, \eqref{eq:Seps}, and \eqref{eq:UnHp},
$$\sup_{s\in[\tau,\mathsf T]}\|(S_\epsilon(s)-\bar{S}(s)^\vee) (\bar{U}^{1})^\vee\|_{\mathbb{H}_\gamma}^p\lesssim\|(\bar{U}^{1})^\vee\|_{\mathbb{H}_\gamma}^p \lesssim\|\bar{U}^{1}\|_{\bar{\mathbb{H}}_\gamma}^p\in L^1(\Omega),$$
one can apply \eqref{eq:Seps-S} and the dominated convergence theorem to obtain that
$$\lim_{\epsilon\to0}\E\bigg[\sup_{s\in[\tau,\mathsf T]}\|(S_\epsilon(s)-\bar{S}(s)^\vee) (\bar{U}^{1})^\vee\|_{\mathbb{H}_\gamma}^p\bigg]=0.$$
From the proof of Lemma \ref{prop:AC}, one can prove that $$\lim_{\epsilon\to0}\E\left[\|J_{\epsilon,1,1}\|_{\mathbb{H}_\gamma}^p\right]=\lim_{\epsilon\to0}\E[\|U_{\epsilon}^{1}-(\bar{U}^{1})^\vee\|_{\mathbb{H}_\gamma}^p]=0.$$ 
Consequently, it holds that
\begin{equation}\label{eq:Jn1}
\!\!\!\lim_{\epsilon\to 0} \E\left[\sup_{1\le n\le N_1}\|J_{\epsilon,n,1}\|_{\mathbb{H}_\gamma}^p\right]\!\le\lim_{\epsilon\to 0} \left\{\E\left[\|J_{\epsilon,1,1}\|_{\mathbb{H}_\gamma}^p\right]+\E\left[\sup_{2\le n\le N_1}\|J_{\epsilon,n,1}\|_{\mathbb{H}_\gamma}^p\right]\right\}=0.
\end{equation}

For any $\alpha\in(0,\frac12)$, the following elementary identity holds
$$\int_{\lfloor s\rfloor}^{t_n}(t_n-\sigma)^{\alpha-1}(\sigma-\lfloor s\rfloor)^{-\alpha}\ud \sigma=\frac{\pi}{\sin(\pi\alpha)}\quad\forall~ 0\le s<t_n\le \mathsf T.$$
By a factorization argument (cf. \cite{DP14}), for any $\alpha\in(0,\frac12)$,
\begin{align*}
J_{\epsilon,n,2}=\frac{\sin (\pi \alpha)}{\pi}\int_{t_1}^{t_n}(t_n-\sigma)^{\alpha-1}S_\epsilon(t_n-\sigma)Z^\epsilon_\alpha(\sigma)\ud \sigma,
\end{align*}
where $Z_\alpha^\epsilon(\sigma)=\int_{t_1}^{\lceil\sigma\rceil}(\sigma-\lfloor s\rfloor)^{-\alpha}S_\epsilon(\sigma-\lfloor s\rfloor)\big(G(U_\epsilon^{\frac{\lfloor s\rfloor}{\tau}})-G((\bar{U}^{\frac{\lfloor s\rfloor}{\tau}})^\vee)\big)\ud\mathcal{W}(s)$. Here $\lceil\sigma\rceil=t_{i+1}$ for $\sigma\in(t_{i},t_{i+1}]$ with $i=0,1,\ldots,N-1$.
By the H\"older inequality and \eqref{eq:St}, for any $p>1/\alpha $,
\begin{align}\label{eq:Zalpha}
\E\left[\sup_{1\le n\le N_1}\|J_{\epsilon,n.2}\|_{\mathbb{H}_\gamma}^p\right]\le C_{p,\alpha,T}\int_{t_1}^{t_{N_1}}\E\left[\|Z^\epsilon_\alpha(\sigma)\|_{\mathbb{H}_\gamma}^p\right]\ud \sigma,\quad 1\le N_1\le N.
\end{align}
The Burkholder inequality, \eqref{eq:St}, and the Lipschitz continuity of $G$ indicate
\begin{align*}
&\E\left[\|Z^\epsilon_\alpha(\sigma)\|_{\mathbb{H}_\gamma}^p\right]\\
&\lesssim\E\left[\bigg|\int_{t_1}^{\lceil\sigma\rceil}(\sigma-\lfloor s\rfloor)^{-2\alpha}\Big\|S_\epsilon(\sigma-\lfloor s\rfloor)\big(G(U_\epsilon^{\frac{\lfloor s\rfloor}{\tau}})-G((\bar{U}^{\frac{\lfloor s\rfloor}{\tau}})^\vee)\big)\Big\|^2_{\mathscr{L}_2(RK,\mathbb{H}_\gamma)}\ud s\bigg|^{\frac p2}\right]\\
&\lesssim\E\left[\bigg|\int_{t_1}^{\lceil\sigma\rceil}(\sigma-\lfloor s\rfloor)^{-2\alpha}\|U_\epsilon^{\frac{\lfloor s\rfloor}{\tau}}-(\bar{U}^{\frac{\lfloor s\rfloor}{\tau}})^\vee\|^2_{\mathbb{H}_\gamma}\ud s\bigg|^{\frac p2}\right]\\
&\lesssim\bigg|\int_{t_1}^{\lceil\sigma\rceil}(\sigma-\lfloor s\rfloor)^{-2\alpha}\|U_\epsilon^{\frac{\lfloor s\rfloor}{\tau}}-(\bar{U}^{\frac{\lfloor s\rfloor}{\tau}})^\vee\|^2_{L^p(\Omega,\mathbb{H}_\gamma)}\ud s\bigg|^{\frac p2}.
\end{align*}
Hence for any $1\le N_1\le N$,
\begin{align*}
&\int_{t_1}^{t_{N_1}}\E\left[\|Z_\alpha^\epsilon(\sigma)\|_{\mathbb{H}_\gamma}^p\right]\ud\sigma\\
&\le C\sum_{i=1}^{N_1-1}\int_{t_i}^{t_{i+1}}\bigg|\int_{t_1}^{t_{i+1}}(\sigma-\lfloor s\rfloor)^{-2\alpha}\|U_\epsilon^{\frac{\lfloor s\rfloor}{\tau}}-(\bar{U}^{\frac{\lfloor s\rfloor}{\tau}})^\vee\|^2_{L^p(\Omega,\mathbb{H}_\gamma)}\ud s\bigg|^{\frac p2}\ud\sigma\\
&\le C \sum_{i=1}^{N_1-1}\sup_{1\le l\le i}\E\Big[\|U_\epsilon^{l}-(\bar{U}^{l})^\vee\|^p_{\mathbb{H}_\gamma}\Big]\int_{t_i}^{t_{i+1}}\bigg|\int_{t_1}^{t_{i+1}}(\sigma-\lfloor s\rfloor)^{-2\alpha}\ud s\bigg|^{\frac p2}\ud\sigma\\
&\le C\tau \sum_{i=1}^{N_1-1}\sup_{1\le l\le i}\E\Big[\|U_\epsilon^{l}-(\bar{U}^{l})^\vee\|^p_{\mathbb{H}_\gamma}\Big].
\end{align*}
Collecting the above estimates gives
\begin{align}\label{eq:Jn2}
\E\left[\sup_{1\le n\le N_1}\|J_{\epsilon,n,2}\|_{\mathbb{H}_\gamma}^p\right]\le C\tau\sum_{i=1}^{N_1-1}\sup_{1\le l\le i}\E\Big[\|U_\epsilon^{l}-(\bar{U}^{l})^\vee\|^p_{\mathbb{H}_\gamma}\Big].
\end{align}
Using the factorization argument again, we rewrite
\begin{align*}
J_{\epsilon,n,3}&=\frac{\sin(\pi\alpha)}{\pi}\int_{t_1}^{t_n}(t_n-\sigma)^{\alpha-1}S_\epsilon(t_n-\sigma)Z^\epsilon_{\alpha,1}(\sigma)\ud \sigma\\
&\quad+\frac{\sin(\pi\alpha)}{\pi}\int_{t_1}^{t_n}(t_n-\sigma)^{\alpha-1}(S_\epsilon(t_n-\sigma)-\bar{S}(t_n-\sigma)^\vee)Z_{\alpha,2}(\sigma)\ud \sigma,
\end{align*}
where
\begin{align*}
Z_{\alpha,1}^\epsilon(\sigma)&:=\int^{\lceil\sigma\rceil}_{t_1}(\sigma-\lfloor s\rfloor)^{-\alpha}\underbrace{\left(S_\epsilon(\sigma- \lfloor s\rfloor)-\bar{S}(\sigma- \lfloor s\rfloor)^\vee \right)G((\bar{U}^{\frac{\lfloor s\rfloor}{\tau}})^\vee)}_{=:\delta SG(s;\sigma,\epsilon)}\ud\mathcal{W}(s),\\
Z_{\alpha,2}(\sigma)&:=\int^{\lceil\sigma\rceil}_{t_1}(\sigma-\lfloor s\rfloor)^{-\alpha}\bar{S}(\sigma- \lfloor s\rfloor)^\vee
G((\bar{U}^{\frac{\lfloor s\rfloor}{\tau}})^\vee)\ud\mathcal{W}(s).
\end{align*}
Similarly as in the proof of \eqref{eq:Zalpha}, one also has that for any $p>1/\alpha$,
\begin{align}\label{eq:Zalpha12}
\E\Big[\sup_{1\le n\le N_1}&\|J_{\epsilon,n,3}\|_{\mathbb{H}_\gamma}^p\Big]\le C_{p,\alpha,T}\int_{t_1}^{t_{N_1}}\E\left[\|Z^\epsilon_{\alpha,1}(\sigma)\|_{\mathbb{H}_\gamma}^p\right]\ud \sigma\\\notag
&+C_{p,\alpha,T}\E\left[\sup_{1\le n\le N_1}\int_{t_1}^{t_{n}}\|(S_\epsilon(t_n-\sigma)-\bar{S}(t_n-\sigma)^\vee)Z_{\alpha,2}(\sigma)\|_{\mathbb{H}_\gamma}^p\ud \sigma\right].
\end{align}
According to \eqref{eq:St}, \eqref{eq:Seps}, and \eqref{eq:ujmu0},
\begin{align*}
&\sum_{j=1}^{\infty}\int^{\lceil\sigma\rceil}_{t_1}(\sigma-\lfloor s\rfloor)^{-2\alpha}\|\delta SG(s;\sigma,\epsilon)\widehat{\mathfrak{u}_j\mu}\|_{\mathbb{H}_\gamma}^2\ud s
\lesssim\int^{\lceil\sigma\rceil}_{t_1}(\sigma-\lfloor s\rfloor)^{-2\alpha}\|G((\bar{U}^{\frac{\lfloor s\rfloor}{\tau}})^\vee)\|_{\mathbb{H}_\gamma}^2\ud s.
\end{align*}
By means of the Minkowski inequality, the linear growth of $G$ and \eqref{eq:UnHp}, 
\begin{align*}
&\int_{t_1}^{t_{N_1}}\E\bigg[\Big|\int^{\lceil\sigma\rceil}_{t_1}(\sigma-\lfloor s\rfloor)^{-2\alpha}\|G((\bar{U}^{\frac{\lfloor s\rfloor}{\tau}})^\vee)\|_{\mathbb{H}_\gamma}^2\ud s\Big|^{\frac{p}{2}}\bigg]\ud \sigma\\
&\le
\int_{t_1}^{t_{N_1}}\bigg|\int^{\lceil\sigma\rceil}_{t_1}(\sigma-\lfloor s\rfloor)^{-2\alpha}\|G((\bar{U}^{\frac{\lfloor s\rfloor}{\tau}})^\vee)\|_{L^p(\Omega,\mathbb{H}_\gamma)}^2\ud s\bigg|^{\frac{p}{2}}\ud \sigma<\infty.
\end{align*}
Therefore, we can apply the dominated convergence theorem to deduce
$$\lim_{\Xi\to\infty}\int_{t_1}^{t_{N_1}}\E\bigg[\Big|\sum_{j=\Xi+1}^{\infty}\int^{\lceil\sigma\rceil}_{t_1}(\sigma-\lfloor s\rfloor)^{-2\alpha}\|\delta SG(s;\sigma,\epsilon)\widehat{\mathfrak{u}_j\mu}\|_{\mathbb{H}_\gamma}^2\ud s\Big|^{\frac{p}{2}}\bigg]\ud \sigma=0.$$
Consequently, the Burkholder inequality yields
\begin{align*}
&\int_{t_1}^{t_{N_1}}\E\left[\|Z^\epsilon_{\alpha,1}(\sigma)\|_{\mathbb{H}_\gamma}^p\right]\ud \sigma\\
&\lesssim \int_{t_1}^{t_{N_1}}\E\bigg[\Big|\sum_{j=\Xi+1}^{\infty}\int^{\lceil\sigma\rceil}_{t_1}(\sigma-\lfloor s\rfloor)^{-2\alpha}\|\delta SG(s;\sigma,\epsilon)\widehat{\mathfrak{u}_j\mu}\|_{\mathbb{H}_\gamma}^2\ud s\Big|^{\frac{p}{2}}\bigg]\ud \sigma\\
&\quad+\int_{t_1}^{t_{N_1}}\E\bigg[\Big|\sum_{j=1}^{\Xi}\int^{\lceil\sigma\rceil}_{t_1}(\sigma-\lfloor s\rfloor)^{-2\alpha}\|\delta SG(s;\sigma,\epsilon)\widehat{\mathfrak{u}_j\mu}\|_{\mathbb{H}_\gamma}^2\ud s\Big|^{\frac{p}{2}}\bigg]\ud \sigma.
\end{align*}
Using the dominated convergence theorem again, for any fixed $\Xi\ge1$, 
\begin{align*}
\lim_{\epsilon\to 0}\int_{t_1}^{t_{N_1}}\E\bigg[\Big|\sum_{j=1}^{\Xi}\int^{\lceil\sigma\rceil}_{t_1}(\sigma-\lfloor s\rfloor)^{-2\alpha}\|\delta SG(s;\sigma,\epsilon)\widehat{\mathfrak{u}_j\mu}\|_{\mathbb{H}_\gamma}^2\ud s\Big|^{\frac{p}{2}}\bigg]\ud \sigma=0,
\end{align*}
thanks to \eqref{eq:Seps-S}. In this way, we have shown that
\begin{equation}\label{eq:Za1}\lim_{\epsilon\to 0}\int_{t_1}^{t_{N_1}}\E\left[\|Z^\epsilon_{\alpha,1}(\sigma)\|_{\mathbb{H}_\gamma}^p\right]\ud \sigma=0.
\end{equation}
Next, let $\tau_0\in(0,\tau)$ be arbitrary. For any $1\le n\le N_1$,
\begin{align*}
&\int_{t_1}^{t_{n}}\|(S_\epsilon(t_n-\sigma)-\bar{S}(t_n-\sigma)^\vee)Z_{\alpha,2}(\sigma)\|_{\mathbb{H}_\gamma}^p\ud \sigma\\
&\le\int_{t_1}^{t_{n}-\tau_0}\|(S_\epsilon(t_n-\sigma)-\bar{S}(t_n-\sigma)^\vee)Z_{\alpha,2}(\sigma)\|_{\mathbb{H}_\gamma}^p\ud \sigma\\
&\quad +\sqrt{\tau_0}\left(\int_{t_{n}-\tau_0}^{t_n}\|(S_\epsilon(t_n-\sigma)-\bar{S}(t_n-\sigma)^\vee)Z_{\alpha,2}(\sigma)\|_{\mathbb{H}_\gamma}^{2p}\ud \sigma\right)^{\frac12}\\
&\lesssim\int_{t_1}^{t_{n}-\tau_0}\sup_{s\in[\tau_0,\mathsf T]}\|(S_\epsilon(s)-\bar{S}(s)^\vee)Z_{\alpha,2}(\sigma)\|_{\mathbb{H}_\gamma}^p\ud \sigma +\sqrt{\tau_0}\left(\int_{t_{n}-\tau_0}^{t_n}\|Z_{\alpha,2}(\sigma)\|_{\mathbb{H}_\gamma}^{2p}\ud \sigma\right)^{\frac12},
\end{align*}
in virtue of \eqref{eq:St} and \eqref{eq:Seps}.
Hence, by the Cauchy--Schwarz inequality,
\begin{align*}
&\E\left[\sup_{1\le n\le N_1}\int_{t_1}^{t_{n}}\|(S_\epsilon(t_n-\sigma)-\bar{S}(t_n-\sigma)^\vee)Z_{\alpha,2}(\sigma)\|_{\mathbb{H}_\gamma}^p\ud \sigma\right]\\
&\lesssim\E\int_{t_1}^{t_{N_1}}\sup_{s\in[\tau_0,\mathsf T]}\|(S_\epsilon(s)-\bar{S}(s)^\vee)Z_{\alpha,2}(\sigma)\|_{\mathbb{H}_\gamma}^p\ud \sigma+\sqrt{\tau_0}\, \bigg(\E\int_{0}^{\mathsf T}\|Z_{\alpha,2}(\sigma)\|_{\mathbb{H}_\gamma}^{2p}\ud \sigma\bigg)^{\frac12}.
\end{align*}
Taking \eqref{eq:UnHp} and \eqref{eq:St} into account, for any $p\ge1$ and $0<\alpha<1/2$,
\begin{align*}
&\E\int_{0}^{\mathsf T}\|Z_{\alpha,2}(\sigma)\|_{\mathbb{H}_\gamma}^{2p}\ud \sigma\\
&\lesssim\int_0^{\mathsf T}\E\bigg[\Big|\int^{\lceil\sigma\rceil}_{t_1}(\sigma-\lfloor s\rfloor)^{-2\alpha}\|\bar{S}(\sigma- \lfloor s\rfloor)^\vee
G((\bar{U}^{\frac{\lfloor s\rfloor}{\tau}})^\vee)\|^2_{\mathscr{L}_2(RK,\mathbb{H}_\gamma)}\ud s\Big|^{p}\bigg]\ud \sigma\\
&\lesssim\int_0^{\mathsf T}\bigg|\int^{\lceil\sigma\rceil}_{t_1}(\sigma-\lfloor s\rfloor)^{-2\alpha}\|
G((\bar{U}^{\frac{\lfloor s\rfloor}{\tau}})^\vee)\|^2_{L^{2p}(\Omega,\mathbb{H}_\gamma)}\ud s\bigg|^{p}\ud \sigma<\infty.
\end{align*}
Utilizing the dominated convergence theorem and \eqref{eq:Seps-S}, for any $\tau_0\in(0,\tau)$,
$$
\lim_{\epsilon\to0}\E\int_{t_1}^{t_{N_1}}\sup_{s\in[\tau_0,\mathsf T]}\|(S_\epsilon(s)-\bar{S}(s)^\vee)Z_{\alpha,2}(\sigma)\|_{\mathbb{H}_\gamma}^p\ud \sigma=0.
$$
Collecting the previous three estimates, we arrive at
\begin{align*}
&\limsup_{\epsilon\to0}\E\left[\sup_{1\le n\le N_1}\int_{t_1}^{t_{n}}\|(S_\epsilon(t_n-\sigma)-\bar{S}(t_n-\sigma)^\vee)Z_{\alpha,2}(\sigma)\|_{\mathbb{H}_\gamma}^p\ud \sigma\right]\\
&\lesssim \lim_{\epsilon\to0}\E\int_{t_1}^{t_{N_1}}\sup_{s\in[\tau_0,\mathsf T]}\|(S_\epsilon(s)-\bar{S}(s)^\vee)Z_{\alpha,2}(\sigma)\|_{\mathbb{H}_\gamma}^p\ud \sigma+C\sqrt{\tau_0}\lesssim C\sqrt{\tau_0},
\end{align*}
which along with the arbitrariness of $\tau_0$ yields
\begin{align*}
&\limsup_{\epsilon\to0}\E\left[\sup_{1\le n\le N_1}\int_{t_1}^{t_{n}}\|(S_\epsilon(t_n-\sigma)-\bar{S}(t_n-\sigma)^\vee)Z_{\alpha,2}(\sigma)\|_{\mathbb{H}_\gamma}^p\ud \sigma\right]=0.
\end{align*}
Inserting the above estimate and \eqref{eq:Za1} into \eqref{eq:Zalpha12} yields
\begin{align}\label{eq:Jn3}
\lim_{\epsilon\to 0}\E\left[\sup_{1\le n\le N_1}\|J_{\epsilon,n,3}\|_{\mathbb{H}_\gamma}^p\right]=0.
\end{align}
Combining \eqref{eq:UU} and \eqref{eq:Jn2}, we deduce from the discrete Gronwall inequality that
\begin{align*}
& \E\left[\sup_{1\le n\le N_1}\|U_{\epsilon}^{n}-(\bar U^{n})^{\vee}\|_{\mathbb{H}_\gamma}^p\right]\\
&\le C e^{CT}\left( \E\left[\sup_{1\le n\le N_1}\|J_{\epsilon,n,1}\|_{\mathbb{H}_\gamma}^p\right]+\E\left[\sup_{1\le n\le N_1}\|J_{\epsilon,n,3}\|_{\mathbb{H}_\gamma}^p\right]\right),
\end{align*}
which along with \eqref{eq:Jn1} and \eqref{eq:Jn3} yields the desired result.
\qed
\section{Full discretizations for truncated problems}

\subsection{A full discretization for truncated RDA equation}\label{App:B1}
For a space stepsize $h=L/M$ with $M\in \mathbb N_+$, we denote by 
 $\{\mathbf{X}_{k(i,j)}:=(ih,jh),1-M\le i,j\le M-1\}$ the interior spatial grid points, where \begin{equation}\label{eq:kij}
 k(i,j):=(i+M)+(j+M-1)(2M-1).
 \end{equation}
 For $k=1,2,\ldots,(2M-1)^2$, we can find the unique $(i_k,j_k)\in\{1-M,\ldots,M-1\}^2$ such that $k=k(i_k,j_k)$. Indeed, by \eqref{eq:kij}, one has 
$$i_k=\mbox{mod}(k-1,2M-1)-M+1,\quad j_k=\frac{k-i_k-M}{2M-1}+1-M.$$

By using the central finite difference method, for any $k=1,\ldots,(2M-1)^2$,
\begin{align}\label{eq:Delta-u}
&\Delta \mathbf{u}_\epsilon(t,\mathbf{X}_k)=\Delta \mathbf{u}_\epsilon(t,i_kh,j_kh)\\\notag
&\approx \frac{1}{h^2}\Big(\mathbf{u}_\epsilon(t,((i_k-1)h,j_kh)-2\mathbf{u}_\epsilon(t,i_kh,j_kh)+\mathbf{u}_\epsilon(t,(i_k+1)h,j_kh)\Big)\\\notag
&\quad+\frac{1}{h^2}\Big(\mathbf{u}_\epsilon(t,i_kh,(j_k-1)h)-2\mathbf{u}_\epsilon(t,i_kh,j_kh)+\mathbf{u}_\epsilon(t,i_kh,(j_k+1)h)\Big)\\\notag
&=\frac{1}{h^2}\Big(\mathbf{u}_\epsilon(t,\mathbf{X}_{k-1})-2\mathbf{u}_\epsilon(t,\mathbf{X}_{k})+\mathbf{u}_\epsilon(t,\mathbf{X}_{k+1})\Big)\\\notag&\quad+\frac{1}{h^2}\Big(\mathbf{u}_\epsilon(t,\mathbf{X}_{k-(2M-1)})-2\mathbf{u}_\epsilon(t,\mathbf{X}_{k})+\mathbf{u}_\epsilon(t,\mathbf{X}_{k+(2M-1)})\Big),
\end{align}
where values of $\mathbf{u}_\epsilon(t,\cdot)$ at boundary points vanish.
Introduce the matrix form of the 2-dimensional Dirichlet Laplacian 
\begin{align*}
\mathbf{A}&=I\otimes\triangle_M+\triangle_M\otimes I
\in\R^{(2M-1)^2\times (2M-1)^2},
\end{align*}
where $\triangle_M\in\R^{(2M-1)\times (2M-1)}$ is the matrix form of the 1-dimensional Dirichlet Laplacian, i.e., $(\triangle_M)_{ij}=-2$ if $i=j$, $(\triangle_M)_{ij}=1$ if $|i-j|=1$, and $(\triangle_M)_{ij}=0$ if $|i-j|\ge2$. Then \eqref{eq:Delta-u} admits the following compact form
\begin{align*}
\Delta \mathbf{u}_\epsilon(t,\mathbf{X}_k) \approx\frac{1}{h^2}\sum_{l=1}^{(2M-1)^2}\mathbf{A}_{k,l}\mathbf{u}_\epsilon(t,\mathbf{X}_{l}).
\end{align*}
Similar to \eqref{eq:Delta-u}, by the central finite difference method, for any $k=1,2,\ldots,(2M-1)^2$,
\begin{align}\label{eq:Hnabla-u}\notag
\langle\nabla^{\perp} H(\mathbf{X}_k), \nabla \mathbf{u}_\epsilon(t,\mathbf{X}_k)\rangle
&=-\partial_2H(\mathbf{X}_k)\partial_1 \mathbf{u}_\epsilon(t,i_kh,j_kh)+
\partial_1H(\mathbf{X}_k)\partial_2 \mathbf{u}_\epsilon(t,i_kh,j_kh)\\\notag
&\approx-\partial_2H(\mathbf{X}_k)\frac{\mathbf{u}_\epsilon(t,(i_k+1)h,j_kh)-\mathbf{u}_\epsilon(t,(i_k-1)h,j_kh)}{2h}\\\notag
&\quad+
\partial_1H(\mathbf{X}_k)\frac{\mathbf{u}_\epsilon(t,i_kh,(j_k+1)h)-\mathbf{u}_\epsilon(t,i_kh,(j_k-1)h)}{2h}\\\notag
&=-\partial_2H(\mathbf{X}_k)\frac{\mathbf{u}_\epsilon(t,\mathbf{X}_{k+1})-\mathbf{u}_\epsilon(t,\mathbf{X}_{k-1})}{2h}\\
&\quad+
\partial_1H(\mathbf{X}_k)\frac{\mathbf{u}_\epsilon(t,\mathbf{X}_{k+(2M-1)})-\mathbf{u}_\epsilon(t,\mathbf{X}_{k-(2M-1)})}{2h},
\end{align}
where $\partial_1H(x)=2x_1+2x_1\zeta^\prime(|x|^2)$ and $\partial_2H(x)=2x_2+2x_2\zeta^\prime(|x|^2)$ for any $x=(x_1,x_2)\in\R^2$.
Introduce $\mathbf{R}_1=I\otimes \bigtriangledown_M\in\R^{(2M-1)^2\times (2M-1)^2}$
and $\mathbf{R}_2=\bigtriangledown_M\otimes I\in\R^{(2M-1)^2\times (2M-1)^2}$, where $\bigtriangledown_M\in\R^{(2M-1)\times (2M-1)}$ is a matrix form of the gradient, i.e., $(\bigtriangledown_M)_{ij}=-1$ if $i=j+1$, $(\bigtriangledown_M)_{ij}=1$ if $i=j-1$, and $(\bigtriangledown_M)_{ij}=0$ if $|i-j|\neq 1$. Moreover,
we define two matrices 
\begin{align*}
\mathbf{H}_1(\mathbf{X})&=\textup{diag}(\partial_1H(\mathbf{X}_1),\partial_1H(\mathbf{X}_2),\ldots,\partial_1H(\mathbf{X}_{(2M-1)^2}))\in\R^{(2M-1)^2\times (2M-1)^2}, \\
\mathbf{H}_2(\mathbf{X})&=\textup{diag}(\partial_2H(\mathbf{X}_1),\partial_2H(\mathbf{X}_2),\ldots,\partial_2H(\mathbf{X}_{(2M-1)^2}))\in\R^{(2M-1)^2\times (2M-1)^2}
\end{align*}
for $\mathbf{X}=(\mathbf{X}_1,\ldots,\mathbf{X}_{(2M-1)^2})\in\R^{(2M-1)^2}$. Now we can recast 
\eqref{eq:Hnabla-u} into 
$$\left\langle\nabla^{\perp} H(\mathbf{X}_k), \nabla \mathbf{u}_\epsilon(t,\mathbf{X}_k)\right\rangle\approx\sum_{l=1}^{(2M-1)^2}\left(-\frac{1}{2h}\mathbf{H}_2(\mathbf{X})\mathbf{R}_1+\frac{1}{2h}\mathbf{H}_1(\mathbf X)\mathbf{R}_2\right)_{kl}\mathbf{u}_\epsilon(t,\mathbf{X}_l).$$
In this way, we obtain that the matrix
$\mathcal L_\epsilon^M:=\frac{1}{h^2}\mathbf{A}-\frac{1}{2h\epsilon}\mathbf{H}_2(\mathbf{X})\mathbf{R}_1+\frac{1}{2h\epsilon}\mathbf{H}_1(\mathbf X)\mathbf{R}_2\in \R^{(2M-1)^2\times (2M-1)^2}$
is a formal approximation of the operator $\mathcal L_\epsilon$. 

By \eqref{eq:Lam}, for any $k,l=1,2,\ldots,(2M-1)^2$,
\begin{align*}
\E\left[\mathcal{W}(t,\mathbf{X}_k)\mathcal{W}(s,\mathbf{X}_l)\right]&=(t\wedge s)\Lambda(\mathbf{X}_k-\mathbf{X}_l)
=(t\wedge s)\mathbf{F}_{k,l},
\end{align*}
where 
$\mathbf{F}_{k,l}:=\Lambda(\mathbf{X}_k-\mathbf{X}_l)=\Lambda((i_k-i_l)h,(j_k-j_l)h).$
By denoting \begin{align*}
\mathbf{F}&=(\mathbf{F}_{k,l})_{1\le k,l\le (2M-1)^2}\in\R^{(2M-1)^2\times (2M-1)^2},\\
\mathbf{W}^{(2M-1)^2}(t)&=(\mathcal{W}(t,\mathbf{X}_1),\mathcal{W}(t,\mathbf{X}_2),\ldots,\mathcal{W}(t,\mathbf{X}_{(2M-1)^2}))^\top,
\end{align*}
we have that $\{\mathbf{B}^{(2M-1)^2}(t):=\mathbf{F}^{-\frac12}\mathbf{W}^{(2M-1)^2}(t)\}_{t\ge0}$ is a $(2M-1)^2$-dimensional standard Brownian motion.
Define a function $\mathbf{G}:\R^{(2M-1)^2}\to \R^{(2M-1)^2\times (2M-1)^2}$ by
$$\mathbf{G}(v)=\textup{diag}(g(v_1),g(v_2),\ldots,g(v_{(2M-1)^2}))\quad \text{for }v=(v_1,\ldots,v_{(2M-1)^2})\in\R^{(2M-1)^2}.$$
 Then we obtain a spatial semi-discretization for \eqref{eq:SPDE-trun} as follows
\begin{align*}
\ud \mathbf{u}_\epsilon^{M}(t)&=\mathcal L_\epsilon^M \mathbf{u}^{M}_\epsilon(t)\ud t
+\mathbf{G}(\mathbf{u}^{M}_\epsilon(t))\mathbf{F}^{\frac12}\ud \mathbf{B}^{M}(t),\quad t\in[0,\mathsf T]\\\notag
\mathbf{u}^{M}_\epsilon(0)&=(\psi(\mathbf{X}_1),\psi(\mathbf{X}_2),\ldots,\psi(\mathbf{X}_{(2M-1)^2}))^\top,
\end{align*}
where the vector
$\mathbf{u}_{\epsilon}^{M}(t):=(\mathbf{u}^{M}_\epsilon(t,\mathbf{X}_1), \mathbf{u}_\epsilon^M(t,\mathbf{X}_2),\ldots, \mathbf{u}_\epsilon^M(t,\mathbf{X}_{(2M-1)^2}))^\top$ is a formal numerical approximation of $(\mathbf{u}_\epsilon(t,\mathbf{X}_1), \mathbf{u}_\epsilon(t,\mathbf{X}_2),\ldots, \mathbf{u}_\epsilon(t,\mathbf{X}_{(2M-1)^2}))^\top$. Combining the exponential Euler approximation \eqref{eq:EEM}, we further obtain a fully discrete numerical approximation for the truncated RDA equation \eqref{eq:SPDE-trun} as follows
\begin{align*}
\mathbf{u}^{M,\tau}_{\epsilon,n+1}&=\exp(\mathcal L_\epsilon^M\tau)\mathbf{u}^{M,\tau}_{\epsilon,n}
+\exp(\mathcal L_\epsilon^M\tau)\mathbf{G}(\mathbf{u}^{M,\tau}_{\epsilon,n})\mathbf{F}^{\frac12}(\mathbf{B}^{2M-1}(t_{n+1})-\mathbf{B}^{2M-1}(t_{n})) 
\end{align*}
for every $n=0,1,\ldots, N-1$ with
$\mathbf{u}^{M,\tau}_{\epsilon,0}=\mathbf{u}^{M}_\epsilon(0)$. 

\subsection{A full discretization for truncated limiting equation
}\label{App:B2}
Let $0=z_0<z_1<z_2<\cdots<z_M=L$ be a uniform partition of the interval $[0,L]$ with the space stepsize $h=L/M$.
 Based on the spatial finite difference method, 
we discrete the truncated limiting equation \eqref{eq:utzk-p-trun} in the space direction as follows
\begin{align}\label{eq:uM1D}\notag
\partial_t\bar{u}^{M}(t,z_0)&=\beta(z_0)\frac{\bar{u}^{M}(t,z_{1})-\bar{u}^{M}(t,z_0)}{h}+g(\bar{u}^{M}(t,z_0))\partial_t\bar{\mathcal{W}}(t,z_0),\\\notag
\partial_t\bar{u}^{M}(t,z_i)&=\alpha(z_i)\frac{\bar{u}^{M}(t,z_{i+1})-2\bar{u}^{M}(t,z_{i})+\bar{u}^{M}(t,z_{i-1})}{h^2}\\\notag
&\quad+\beta(z_i)\frac{\bar{u}^{M}(t,z_{i+1})-\bar{u}^{M}(t,z_{i-1})}{2h}+g(\bar{u}^{M}(t,z_i))\partial_t\bar{\mathcal{W}}(t,z_i),\\\notag
\partial_t\bar{u}^{M}(t,z_{M-1})&=\alpha(z_{M-1})\frac{-2\bar{u}^{M}(t,z_{M-1})+\bar{u}^{M}(t,z_{M-2})}{h^2}-\beta(z_{M-1})\frac{\bar{u}^{M}(t,z_{M-2})}{2h}\\
&\quad+g(\bar{u}^{M}(t,z_{M-1}))\partial_t\bar{\mathcal{W}}(t,z_{M-1}),
\end{align}
for $i=1,2,\ldots,M-2$, where the functions $\alpha$ and $\beta$ are given by \eqref{eq:ATA}.
For $Z=(z_0,z_1,\ldots,z_{M-1})\in\R^M$, we 
define a tridiagonal matrix $\mathcal L^M(Z)\in\R^{M\times M}$ with $\mathcal L^M_{1,1}(Z)=-\frac{\beta(z_0)}{h}$, $\mathcal L^M_{1,2}(Z)=\frac{\beta(z_0)}{h}$, $\mathcal L^M_{M,M-1}(Z)=\frac{\alpha(z_{M-1})}{h^2}-\frac{\beta(z_{M-1})}{2h}$, $\mathcal L^M_{M,M}(Z)=-2\frac{\alpha(z_{M-1})}{h^2}$, and
\begin{equation*}
\mathcal L^M_{i,j}(Z)=\begin{cases}\frac{\alpha(z_{i-1})}{h^2}-\frac{\beta(z_{i-1})}{2h},&\quad \text{if } j=i-1,\\
-2\frac{\alpha(z_{i-1})}{h^2},&\quad \text{if } j=i,\\
\frac{\alpha(z_{i-1})}{h^2}+\frac{\beta(z_{i-1})}{2h},&\quad \text{if } j=i+1,
\end{cases}
\end{equation*}
 for $i=2,\ldots, M-1$.
Under Assumption \ref{Asp:zeta}, for a bounded function $\varphi:\R^2\to \R$,
 \begin{align*}
\varphi^\wedge(z)=\frac{1}{2\pi}\int_0^{2\pi}\varphi(\sqrt{F(z)}\cos \theta,\sqrt{F(z)}\sin \theta)\ud \theta,\quad z\in[0,\infty).
\end{align*}
Therefore, by \eqref{eq:Lam} and $\bar{\mathcal{W}}(t,\cdot)=\mathcal{W}(t,\cdot)^\wedge$, one has
\begin{align}\label{eq:WW}\notag
&\E\left[\bar{\mathcal{W}}(t,z)\bar{\mathcal{W}}(s,y)\right]
=\sum_{j=1}^\infty(\widehat{\mathfrak{u}_j\mu})^{\wedge}(z)(\widehat{\mathfrak{u}_j\mu})^{\wedge}(y)(t\wedge s)
=(t\wedge s)\bar{\Lambda}(z,y)
\end{align}
 for any $t, s>0$ and $z,y\in[0,\infty)$,
where the spatial covariance function $\bar{\Lambda}$ is given by 
$$\bar{\Lambda}(z,y)=\!\frac{1}{4\pi^2}\int_0^{2\pi}\int_0^{2\pi}\Lambda(\sqrt{F(z)}\cos\theta-\sqrt{F(y)}\cos\eta,\sqrt{F(z)}\sin\theta-\sqrt{F(y)}\sin\eta)\ud \theta\ud \eta
$$
for any $z,y\in(0,\infty)$. 
By introducing 
$\bar{ W}^{M}(t):=(\bar{\mathcal W}(t,z_0),\bar{\mathcal W}(t,z_1),\ldots,\bar{\mathcal W}(t,z_{M-1}))^\top$ and $Q:=(\bar{\Lambda}(z_k,z_l))_{1\le k,l\le M}\in\R^{M\times M}$,
we have that $\bar{{B}}^{M}(t)=Q^{-\frac12}{\bar{W}}^{M}(t)$
is an $M$-dimensional standard Brownian motion. As a result, by denoting $\bar{u}^M(t)=(\bar{u}^M(t,z_0),\bar{u}^M(t,z_1),\ldots, \bar{u}^M(t,z_{M-1}))^\top$, we can rewrite \eqref{eq:uM1D} into an $M$-dimensional stochastic ordinary differential equation 
\begin{align*}
\ud \bar{u}^M(t)&=\mathcal L^M(Z)\bar{u}^M(t) \ud t+\bar{G}(\bar{u}^M(t))Q^{\frac12}\ud\bar{B}^{M}(t),\quad t\in[0,\mathsf{T}],\\
\bar{u}^M(0)&=(\psi^\wedge(z_0),\psi^\wedge(z_1),\ldots, \psi^\wedge(z_{M-1}))^\top\in \R^{M}.
\end{align*}
Here $\bar{G}:\R^{M}\to \R^{M\times M}$ is defined by
$$\bar{G}(v)=\textup{diag}(g(v_1),g(v_2),\ldots,g(v_{M})), \quad\text{for } v=(v_1,\ldots,v_{M})\in\R^{M}.$$
Applying further the exponential Euler approximation yields a full discretization of the truncated problem \eqref{eq:utzk-p-trun}, which reads
$$\bar{u}^{M,\tau}_{n+1}=\exp(\mathcal L^M(Z)\tau)\bar{u}^{M,\tau}_n+\exp(\mathcal L^M(Z)\tau)\bar{G}(\bar{u}^{M,\tau}_n)Q^{\frac12}\big(\bar{B}^{M}(t_{n+1})-\bar{B}^{M}(t_{n})\big)$$
for $n=0,1,\ldots,N-1$ and $\bar{u}^M_{0}=\bar{u}^M(0)$.





\end{appendices}

\backmatter

\section*{Declarations}
\bmhead{Acknowledgments}
 The research  is partially supported by the Hong Kong Research Grant Council ECS grant 25302822, GRF grant 15302823, NSFC grant 12301526, the internal grants (P0039016, P0045336, P0046811) from Hong Kong Polytechnic University and the CAS AMSS-PolyU Joint Laboratory of Applied Mathematics.
\vskip 1em

\noindent \textbf{Data Availability} Data sharing not applicable to this article as no datasets were generated or analyzed during the current study.

\vskip 1em

\noindent \textbf{Conflict of interest} The authors declare that they have no conflict of interest.

\bibliographystyle{abbrv}
\bibliography{mybibfile}

\end{document}